\documentclass[10pt,reqno]{amsart}
\usepackage{amssymb,mathrsfs,graphicx}
\usepackage{ifthen}

\usepackage{hyperref}

\usepackage[margin=1in]{geometry}
\usepackage{caption}
\usepackage{sidecap}
\usepackage{rotating,dsfont}
\usepackage{enumitem}

\usepackage{soul}
\usepackage{cancel}
\usepackage[normalem]{ulem}

\usepackage{colortbl}
\definecolor{black}{rgb}{0.0, 0.0, 0.0}
\definecolor{red}{rgb}{1.0, 0.5, 0.5}
 \usepackage{xcolor}
\provideboolean{shownotes} 
\setboolean{shownotes}{true} 
\newcommand{\margnote}[1]{
\ifthenelse{\boolean{shownotes}}%
{\marginpar{\raggedright\tiny\texttt{#1}}}%
{}%
}
\newcommand{\hole}[1]{
\ifthenelse{\boolean{shownotes}}%
{\begin{center} \fbox{ \rule {.25cm}{0cm} \rule[-.1cm]{0cm}{.4cm}
\parbox{.85\textwidth}{\begin{center} \texttt{#1}\end{center}} \rule
{.25cm}{0cm}}\end{center}} {} }

\title[Regularity and low Mach limit for fractional Euler--Alignment]{Regularity theory and low Mach number limit  for the fractional Euler--alignment system}

\author[Choi]{Young-Pil Choi}
\address[Young-Pil Choi]{\newline Department of Mathematics\newline
Yonsei University, 50 Yonsei-Ro, Seodaemun-Gu, Seoul 03722, Republic of Korea}
\email{ypchoi@yonsei.ac.kr}

\author[Jung]{Jinwook Jung}
\address[Jinwook Jung]{\newline Department of Mathematics and  Research Institute for Natural Sciences \newline
Hanyang University, 222 Wangsimni-ro, Seongdong-gu, Seoul 04763, Republic of Korea}
\email{jinwookjung@hanyang.ac.kr}

\numberwithin{equation}{section}

\newtheorem{theorem}{Theorem}[section]
\newtheorem{lemma}{Lemma}[section]

\newtheorem{proposition}{Proposition}[section]
\newtheorem{remark}{Remark}[section]

\newcommand{\R}{\mathbb R}

\newcommand{\bbp} {\mathbb P}
\newcommand{\bbq} {\mathbb Q}

\newcommand{\bq}{\begin{equation}}
\newcommand{\eq}{\end{equation}}
\newcommand{\e}{\varepsilon}
\newcommand{\lt}{\left}
\newcommand{\rt}{\right}
\newcommand{\lal}{\langle}
\newcommand{\ral}{\rangle}
\newcommand{\pa}{\partial}

\newcommand{\intr}{\int_{\R^d}}
\newcommand{\intor}{\iint_{\R^d \times \R^d}}

\newcommand{\intrr}{\iint_{\R^d \times \R^d}}

\newcommand{\calC}{\mathcal C}
\newcommand{\calD}{\mathcal D}
\newcommand{\calE}{\mathcal E}

\newcommand{\calK}{\mathcal K}

\newcommand{\calR}{\mathcal R}

\newcommand{\calV}{\mathcal V}

\newcommand{\calX}{\mathcal X}

\makeatletter
\def\moverlay{\mathpalette\mov@rlay}
\def\mov@rlay#1#2{\leavevmode\vtop{%
   \baselineskip\z@skip \lineskiplimit-\maxdimen
   \ialign{\hfil$\m@th#1##$\hfil\cr#2\crcr}}}
\newcommand{\charfusion}[3][\mathord]{
    #1{\ifx#1\mathop\vphantom{#2}\fi
        \mathpalette\mov@rlay{#2\cr#3}
      }
    \ifx#1\mathop\expandafter\displaylimits\fi}
\makeatother

\newcommand{\rd}{\textnormal{d}}
\newcommand{\dx}{\textnormal{d}x}

\newcommand{\dt}{\textnormal{d}t}

\newcommand{\dy}{\textnormal{d}y}

\newcommand{\dxi}{\rd\xi}

\newcommand{\sfI}{\mathsf{I}}
\newcommand{\sfJ}{\mathsf{J}}
\newcommand{\sfK}{\mathsf{K}}
\newcommand{\sfL}{\mathsf{L}}

\begin{document}
\allowdisplaybreaks

\date{\today}

\keywords{Fractional Euler--alignment system, global regularity, low Mach number limit, fractional Navier--Stokes equations, relative energy method}
\subjclass[2020]{35Q70, 76N10}

\begin{abstract}
 We study the compressible Euler--alignment system with pressure under a singular pressure scaling, where the hypersingular communication weight induces a fractional alignment operator of order $2\alpha$, $0<\alpha<1$. The scaling corresponds to a large-time and small-velocity regime and leads to a low Mach number problem in which the density is forced to remain close to a constant state. Our main result is a uniform regularity theory for this scaled pressure system. We establish uniform estimates with respect to the scaling parameter and construct global strong solutions near the constant state. A key feature of the analysis is that, in the low-order fractional regime $0<\alpha\le\frac12$, the estimates close under the lower Sobolev condition $s>\frac d2+1-2\alpha$, gaining $\alpha$ derivatives over the threshold $s>\frac d2 + 1 - \alpha$ arising from a direct use of the fractional alignment dissipation. This is achieved by combining refined commutator estimates for the singular alignment operator with the density dissipation induced by the pressure scaling.  As an application of the uniform estimates, we justify the low Mach number limit toward the incompressible Navier--Stokes system with fractional dissipation. For general small, possibly ill-prepared initial data, a Helmholtz decomposition combined with dispersive estimates for the acoustic component yields subsequential strong convergence locally in space-time to a distributional solution of the limiting system. For well-prepared initial data, a relative-energy argument further identifies the limit with a prescribed sufficiently regular solution and yields global strong convergence in the fractional dissipation norm.
\end{abstract}

\maketitle \centerline{\date}

\tableofcontents

%
%
%
%
%
%
\section{Introduction}

The Euler--alignment system arises as a hydrodynamic description of collective behavior models with velocity alignment, most notably the Cucker--Smale flocking model \cite{CCP17, CHL17, CS07, HT08, Shv24, Tad23}. In this paper, we consider the compressible Euler--alignment system with pressure and a hypersingular communication weight:
\[\begin{aligned}
&\pa_t \rho+\nabla_x\cdot(\rho u)=0,\quad (t,x)\in\R_+\times\R^d,\\
&\pa_t(\rho u)+\nabla_x\cdot(\rho u\otimes u)+\nabla_x p(\rho) =\rho(t,x)\intr \phi(x-y)(u(t,y)-u(t,x))\rho(t,y)\,\dy,
\end{aligned}\]
where
\[
\phi(x)= \frac{c_{d,\alpha}}{|x|^{d+2\alpha}},\quad 0<\alpha<1,
\]
and $c_{d,\alpha}>0$ is chosen consistently with the normalization of the fractional Laplacian. We also take
\[
p(\rho)=\rho^\gamma,\quad \gamma\ge1.
\]
Throughout, the singular integrals involving $\phi$ are understood in the Cauchy principal value sense.  For this choice of the communication weight, the alignment force has a commutator structure. With the normalization of the fractional Laplacian used here, one has
\[
\intr \phi(x-y)(u(y)-u(x))\rho(y)\,\dy = -\Lambda^{2\alpha}(\rho u)+u\Lambda^{2\alpha}\rho, \quad \Lambda:=(-\Delta)^{\frac12}.
\]
This point of view, developed in the commutator-forcing framework for Euler--alignment systems with singular kernels \cite{DKRT18, ST17a,ST17b,ST18}, shows that singular alignment acts as a density-dependent fractional dissipation. In particular, near the constant state $\rho=1$, the leading linear part of the alignment force is $-\Lambda^{2\alpha}u$.

We study the large-time and small-velocity regime. Introducing the scaling
\[
\rho\mapsto \rho(\e t,x),\quad u\mapsto \e u(\e t,x),\quad \phi\mapsto \e\phi,
\]
we obtain the rescaled system
\bq\label{main_eq_e}
\begin{aligned}
&\pa_t \rho+\nabla_x\cdot(\rho u)=0,\quad (t,x)\in\R_+\times\R^d,\\
&\pa_t(\rho u)+\nabla_x\cdot(\rho u\otimes u) + \frac1{\e^2}\nabla_x p(\rho) =\rho(t,x)\intr \phi(x-y)(u(t,y)-u(t,x))\rho(t,y)\,\dy.
\end{aligned}
\eq
The singular pressure term in \eqref{main_eq_e} forces the density to remain close to the constant state. In view of the commutator identity above, this means that the density-dependent alignment force is expected to reduce, at the leading order, to the linear fractional dissipation $-\Lambda^{2\alpha}u$. Thus the natural limiting equation is the incompressible Navier--Stokes system with fractional dissipation.  The main purpose of this paper is to develop a uniform regularity theory for the scaled pressure system \eqref{main_eq_e}. In particular, we derive estimates that are uniform with respect to $\e$ and construct global strong solutions near the constant state. These estimates also provide a natural framework for the low Mach number limit toward the fractional incompressible Navier--Stokes system.

 The well-posedness theory for Euler--alignment systems has been extensively studied in several regimes. For pressureless Euler--alignment systems, critical threshold phenomena and global regularity have been investigated for bounded, weakly singular, and strongly singular communication weights; see, for instance, \cite{AC21, CCTT16, Cho19, CJ24, DMPW19, DKRT18,KT18,Les19, Tad26, TT14, Tan20}. A particularly important feature of singular alignment is its regularizing effect. In the one-dimensional pressureless setting, a commutator-forcing framework was developed to establish global regularity and flocking for singular alignment models, including the critical fractional regime \cite{ST17a,ST17b,ST18}. Related lower-regularity and continuation-criterion mechanisms have also been developed for topological or special-structure Euler--alignment systems; see, for example, \cite{LRS22,LRSh22}. Euler--alignment systems with pressure have also been studied recently. For the compressible Euler system with singular velocity alignment, small-data global classical or strong solutions and large-time flocking or decay estimates were obtained in \cite{BMTX24,BTX24, CTT21}. In particular, the pressure case with strongly singular alignment was treated in Sobolev spaces for small smooth perturbations of constant states in \cite{BTX24}. More recently, a global well-posedness theory in critical Besov spaces for the compressible Euler--alignment system with pressure was established in \cite{BMTX24} when the order of the fractional alignment operator lies in $(1,2)$, which corresponds to $\frac12<\alpha<1$ in the present notation.

The present work differs from the above results in that we develop estimates that are uniform with respect to the singular scaling parameter $\e$. This is not only a technical requirement for passing to the incompressible limit, but also a regularity question for the scaled pressure system itself: the pressure term becomes singular as $\e\to0$, while the nonlinear commutator structure of the alignment force must be controlled uniformly.

Another key point concerns the regularity required in the low-order fractional regime $0<\alpha\le\frac12$, corresponding to alignment of order $2\alpha\le1$. We note that the critical Besov theory of \cite{BMTX24} applies to alignment of order strictly between $1$ and $2$, which corresponds to $\frac12<\alpha<1$ in the present notation, while the extension to alignment of order not exceeding $1$ was left for future investigation there. In contrast, our uniform Sobolev estimates also cover the low-order regime $0<\alpha\le\frac12$. In this regime, the estimates can be closed under the condition
\[
s>\frac d2+1-2\alpha.
\]
This improves upon the threshold
\[
s>\frac d2+1-\alpha
\]
suggested by a direct use of the fractional alignment dissipation. The improvement is obtained through a refined commutator analysis combined with the pressure-induced density dissipation
\[
\frac1{\e^2}\|\nabla \rho\|_{H^{s+\alpha-1}}^2.
\]
In particular, the regularity threshold is lowered by $\alpha$ derivatives compared with the direct energy estimate and lies below the standard hyperbolic threshold $s>\frac d2+1$. Moreover, the threshold $\frac d2+1-2\alpha$ agrees with the regularity index for the velocity in the critical Besov setting of \cite{BMTX24} when their fractional order is identified with $2\alpha$. At the borderline value $\alpha=\frac12$, our condition reduces to $s>\frac d2$.

For the higher-order regime $\frac12<\alpha<1$, the main issue is different. While the regularity threshold in this regime is not intended as an improvement over the critical-space theories discussed above, the uniform estimate with respect to $\e$ still requires a separate argument adapted to the singular pressure scaling and the nonlinear commutator structure of the alignment force. In particular, the density dissipation is recovered at the level
\[
\frac1{\e^2}\|\nabla \rho\|_{H^{s-\alpha}}^2.
\]
Thus, the lower-regularity improvement is specific to the low-order fractional regime, whereas the derivation of $\e$-uniform estimates is a main feature of both regimes. These estimates, in turn, provide the basis for the incompressible limit throughout the full range $0<\alpha<1$.

Setting $h:=\rho-1$, the system \eqref{main_eq_e} can be rewritten as
\bq\label{h-u-system}
\begin{aligned}
&\pa_t h+\nabla\cdot((1+h)u)=0,\\
&\pa_t u+u\cdot\nabla u+\frac{\gamma}{\e^2}(1+h)^{\gamma-2}\nabla h = -\Lambda^{2\alpha}u-\Lambda^{2\alpha}(hu)+u\Lambda^{2\alpha}h.
\end{aligned}
\eq
The singular pressure term of size $\e^{-2}$ yields the uniform density control needed below. Formally, as $\e\to0$, one expects
\[
\rho^\e\to1
\]
and the velocity to converge to the solution of the incompressible Navier--Stokes system with fractional dissipation:
\bq\label{eq:frac_NS}
\begin{aligned}
&\pa_t v+v\cdot\nabla v+\nabla \pi+\Lambda^{2\alpha}v=0,\\
&\nabla\cdot v=0.
\end{aligned}
\eq
We shall refer to \eqref{eq:frac_NS} as the fractional incompressible Navier--Stokes system. Equations of this type have been studied in connection with local well-posedness, phase transitions, regularity criteria, partial regularity, and nonuniqueness; see, for instance, \cite{BG24,KP02,Lio69,LT20,Tao09,YZ12,Zha10} and the references therein. In the present paper, we first obtain a distributional solution as a subsequential limit for ill-prepared data. For well-prepared data, we compare the scaled solutions with a prescribed sufficiently regular solution on the time interval under consideration and establish strong convergence to that solution.

We now state the main results. For $d\ge 2$ and $0<\alpha<1$, we set
\[
\sigma_\alpha:= 
\begin{cases}
s+\alpha-1, & 0<\alpha\le \frac12,\\[1mm]
s-\alpha, & \frac12<\alpha<1.
\end{cases}
\]
The regularity assumption on $s$ is given by
\bq\label{eq:s-ass}
\begin{cases}
0<\alpha\le \frac12,\quad s>\frac d2+1-2\alpha,\\[1mm]
\frac12<\alpha<1,\quad s> \frac d2.
\end{cases}
\eq

\begin{theorem}\label{thm:global} Let $d\ge2$, $0<\alpha<1$, $\gamma\ge1$, and $0<\e\le1$.  Assume that $s$ satisfies \eqref{eq:s-ass}. There exists $\eta_0>0$, independent of $\e$, such that if the initial data satisfy
\[
\rho_0^\e=1+h_0^\e>0,\quad h_0^\e\in H^s(\R^d),\quad u_0^\e\in H^s(\R^d),
\]
and
\[
\frac{\|h_0^\e\|_{H^s}}{\e}+\|u_0^\e\|_{H^s}\le \eta_0,
\]
then the rescaled system \eqref{main_eq_e} admits a unique global strong solution $(\rho^\e,u^\e)$ with $\rho^\e=1+h^\e$. Moreover, for all $t\ge0$,
\bq\label{main_uniform_est}
\frac{\|h^\e(t)\|_{H^s}^2}{\e^2} +\|u^\e(t)\|_{H^s}^2  +\int_0^t \lt( \frac{\|\nabla h^\e(\tau)\|_{H^{\sigma_\alpha}}^2}{\e^2} +\|\Lambda^\alpha u^\e(\tau)\|_{H^s}^2 \rt) \rd\tau \le C \lt( \frac{\|h_0^\e\|_{H^s}^2}{\e^2} +\|u_0^\e\|_{H^s}^2 \rt),
\eq
where $C>0$ is independent of $t$ and $\e$.
\end{theorem}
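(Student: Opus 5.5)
We would follow the standard scheme: local well-posedness, an a priori estimate uniform in $\e$, and a continuity argument. Local existence and uniqueness in $H^s$ for \eqref{h-u-system} with $\e$ fixed come from a Friedrichs/Picard iteration on the symmetrized system. The pressure is symmetrized by the weight $a(h):=\gamma(1+h)^{\gamma-2}$: we test the $h$-equation against $\e^{-2}a(h)\Lambda^{2k}h$-type quantities, or equivalently work with the enthalpy variable $q=\e^{-1}(\rho^{(\gamma-1)/2}-1)$ (or $\log\rho$ if $\gamma=1$). In these variables the singular terms $\e^{-1}\nabla q$ and $\e^{-1}\nabla\cdot u$ are skew-adjoint and cancel exactly in the energy identity. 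The whole task is therefore the uniform a priori bound \eqref{main_uniform_est} under the bootstrap hypothesis $\e^{-1}\|h\|_{H^s}+\|u\|_{H^s}\le 2\eta$. Given that bound, a standard continuation argument with $\eta_0$ small gives global existence.

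\textbf{Step 1 (energy functional).} Let $E(t)=\e^{-2}\|h\|_{H^s}^2+\|u\|_{H^s}^2$, suitably weighted so that the pressure coupling cancels. Applying $\Lambda^s$ and taking inner products yields
\[
\frac12\frac{\rd}{\dt}E+\|\Lambda^\alpha u\|_{H^s}^2\le \text{(transport commutators)}+\text{(alignment nonlinearity)}.
\]
The transport commutators are handled by Kato--Ponce estimates, e.g. $\|[\Lambda^s,u\cdot\nabla]u\|_{L^2}$. For the alignment term we write
\[
-\Lambda^{2\alpha}(hu)+u\Lambda^{2\alpha}h=-h\Lambda^{2\alpha}u-\mathcal C(h,u),
\]
where $\mathcal C$ is the bilinear commutator $\Lambda^{2\alpha}(hu)-h\Lambda^{2\alpha}u-u\Lambda^{2\alpha}h$. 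The piece $h\Lambda^{2\alpha}u$ is absorbed into the dissipation after moving $\Lambda^\alpha$ across, at a cost of $\|h\|_{L^\infty\cap \dot W^{\alpha,\cdot}}\|\Lambda^\alpha u\|_{H^s}^2$, which is small since $\|h\|_{H^s}\le 2\eta\e$.

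\textbf{Step 2 (hypocoercive density dissipation).} To obtain $\e^{-2}\|\nabla h\|_{H^{\sigma_\alpha}}^2$ we add a cross term $\delta\,\langle \Lambda^{\sigma_\alpha}u,\Lambda^{\sigma_\alpha}\nabla h\rangle$ with small $\delta$. Differentiating this term in time, the pressure contributes $-\delta\e^{-2}\|\nabla h\|_{H^{\sigma_\alpha}}^2$. The divergence from the $h$-equation contributes $\delta\|\nabla\cdot u\|_{H^{\sigma_\alpha}}^2$, which is controlled by $\|\Lambda^\alpha u\|_{H^s}^2$ provided $\sigma_\alpha+1\le s+\alpha$. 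This holds for both choices of $\sigma_\alpha$, with equality when $\alpha\le\frac12$. The dissipation term $\langle\Lambda^{2\alpha}u,\nabla h\rangle$ is bounded by $\|\Lambda^\alpha u\|_{H^s}\,\|\nabla h\|_{H^{\sigma_\alpha}}$ when $\sigma_\alpha+2\alpha\le s+\alpha$, i.e. $\sigma_\alpha\le s-\alpha$. This is the reason for the choice $\sigma_\alpha=s-\alpha$ when $\alpha>\frac12$. When $\alpha\le \frac12$ we have $s+\alpha-1\le s-\alpha$, so the same bound holds. Young's inequality then closes the mixed functional $\mathcal E\simeq E$, and its dissipation is $\mathcal D\simeq$ the integrand in \eqref{main_uniform_est}.

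\textbf{Step 3 (closing the nonlinear terms; main obstacle).} We need every nonlinear term bounded by $C\mathcal E^{1/2}\mathcal D$. For $\alpha>\frac12$ with $s>\frac d2$, the dissipation $\|\Lambda^\alpha u\|_{H^s}$ already controls $\|\nabla u\|_{L^\infty}$ in the required interpolated forms, so this case is routine. The hard case is $\alpha\le\frac12$ with $s$ only above $\frac d2+1-2\alpha$, where $\nabla u\notin L^\infty$ from $H^s$ alone. Our plan is to distribute derivatives so that each trilinear term is estimated by $\|\Lambda^\alpha u\|_{H^s}$ twice, or by $\|\Lambda^\alpha u\|_{H^s}$ and $\e^{-1}\|\nabla h\|_{H^{s+\alpha-1}}$, times an $H^s$-small factor. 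Sobolev embedding of $H^{s+\alpha}$ into $W^{1,p}$ for suitable $p$ then gains the extra $\alpha$, for instance in $\langle\Lambda^s(u\cdot\nabla u),\Lambda^s u\rangle$ after moving $\Lambda^{\alpha}$ across. The transport term $u\cdot\nabla h$ in the $h$-equation is the worst, since $h$ enjoys only $H^s$ with no extra smoothing. We would handle it via the commutator $[\Lambda^s,u\cdot\nabla]h$, bounded by $\|\nabla u\|_{L^{p}}\|\Lambda^s h\|_{L^{q}}+\|\Lambda^s u\|_{L^{p'}}\|\nabla h\|_{L^{q'}}$ with exponents chosen so that $\nabla h$ is measured in $H^{s+\alpha-1}$ and $u$ in $\dot H^{s+\alpha}$. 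The factor $\e^{-1}$ on $h$ is then compensated by the weight $\e^{-2}$ in $E$. For the alignment commutator $\mathcal C(h,u)$ we would use refined fractional Leibniz/commutator estimates of Kenig--Ponce--Vega type, which place at most $2\alpha-\theta$ derivatives on each factor. Checking that these exponent constraints are compatible exactly when $s>\frac d2+1-2\alpha$ is the step we expect to be delicate.

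Once this is done we obtain $\frac{\rd}{\dt}\mathcal E+c\mathcal D\le C\mathcal E^{1/2}\mathcal D$. For $\mathcal E\le 4\eta_0^2$ small, this gives $\mathcal E(t)+c\int_0^t\mathcal D\le \mathcal E(0)$, which is \eqref{main_uniform_est} with $C$ independent of $t$ and $\e$. Uniqueness follows from an $L^2$ relative estimate of the difference of two solutions, using the same cancellation of the pressure terms.
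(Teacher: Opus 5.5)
Your a priori argument has the same architecture as the paper's: a weighted $H^s$ energy in which the $\e^{-2}$ pressure coupling cancels, a hypocoercive corrector at level $\sigma_\alpha$, and absorption by smallness. The exponent bookkeeping you left open can be closed. For example, $\|[\Lambda^s,u\cdot\nabla]h\|_{L^2}\lesssim\|\nabla u\|_{L^{d/\alpha}}\|\Lambda^s h\|_{L^{2d/(d-2\alpha)}}+\|\Lambda^s u\|_{L^{2d/(d-2\alpha)}}\|\nabla h\|_{L^{d/\alpha}}$, and $\|\nabla u\|_{L^{d/\alpha}}\lesssim\|\Lambda^{1+\frac d2-\alpha}u\|_{L^2}$ is controlled by $\|\Lambda^\alpha u\|_{H^s}$ exactly when $s\ge\frac d2+1-2\alpha$. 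The paper does this bookkeeping on the Fourier side instead, via $L^1_\xi$--$L^2_\xi$ interpolation and Young's convolution inequality, which needs only pointwise symbol bounds.

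One correction: for $\alpha>\frac12$ the dissipation does not control $\|\nabla u\|_{L^\infty}$ when $\frac d2<s\le\frac d2+1-\alpha$. That regime is therefore not routine. It needs the same $L^p$-type bookkeeping, with the commutator structure of the alignment force used in every estimate. The reason is that $u\Lambda^{2\alpha}h$ by itself puts $s+\alpha>s+1-\alpha$ derivatives on $h$, beyond what $\|\nabla h\|_{H^{s-\alpha}}$ controls.

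\textbf{Gap 1: local theory.} You treat local existence as standard, but both ranges in \eqref{eq:s-ass} allow $s\le\frac d2+1$. There a Friedrichs/Picard iteration on the symmetrized hyperbolic system does not close. The $H^s$ estimate for the transport and acoustic parts needs $\nabla u,\nabla h\in L^1_tL^\infty_x$, which $H^s$ does not provide. Moreover, $h$ has no smoothing except the hypocoercive one produced by the coupling, which a hyperbolic scheme does not see. The local construction must therefore already carry the dissipative structure of the a priori estimate. The paper does this as follows:
\begin{itemize}
\item for $\alpha\le\frac12$: a viscous regularization whose extra terms are absorbed uniformly in $\delta$, then Aubin--Lions compactness, then an energy argument for strong $H^s$ continuity;
\item for $\alpha>\frac12$: the critical Besov theory of \cite{BMTX24}.
\end{itemize}

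\textbf{Gap 2: uniqueness.} Your uniqueness argument fails for the same reason. In the $L^2$ relative energy of $(H,U)=(h_1-h_2,u_1-u_2)$, the term $\e^{-2}\int[p(\rho_1)-p(\rho_2)-p'(\rho_2)H]\nabla\cdot u_2\,\dx\sim\e^{-2}\int|H|^2|\nabla u_2|\,\dx$ is untouched by the pressure cancellation. It cannot be bounded by $\e^{-2}\|\nabla u_2\|_{L^\infty}\|H\|_{L^2}^2$, because $\nabla u_2\in L^1_tL^\infty_x$ does not follow from $L^\infty_tH^s\cap L^2_t\dot H^{s+\alpha}$ when $s\le\frac d2+1-\alpha$. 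Closing the estimate requires a dissipation $\e^{-2}\|\Lambda^\alpha H\|_{L^2}^2$ for the difference; in the low regime the paper bounds this term by $C\|u_2\|_{H^s}\,\e^{-2}\|\Lambda^\alpha H\|_{L^2}^2$ and absorbs it. The basic $L^2$ energy does not provide that dissipation. The paper recovers it with a second hypocoercive corrector $\langle\mathcal K_\alpha H,U\rangle$, where $\mathcal K_\alpha=(I-\Delta)^{\alpha-1}\nabla$, followed by Gr\"onwall with weight $\int_0^T\e^{-2}\|\nabla h_2\|_{H^{s+\alpha-1}}^2\,\dt$ (Lemma \ref{lem:stab}).

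Without these two ingredients, neither the local solution to which your continuation argument applies nor its uniqueness is established.
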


\begin{remark}
The threshold in \eqref{eq:s-ass} reflects the different commutator structures in the two regimes. When $0<\alpha\le\frac12$, the estimates can be closed at the lower regularity level $s>\frac d2+1-2\alpha$. In this case the density dissipation is obtained at the level $\|\nabla h^\e\|_{H^{s+\alpha-1}}$. When $\frac12<\alpha<1$, the standard high-order estimate gives the dissipation $\|\nabla h^\e\|_{H^{s-\alpha}}$.
\end{remark}

We next turn to the incompressible limit, a classical singular-limit problem in compressible fluid mechanics. Foundational results for smooth solutions can be found in \cite{KM81,KM82,Sch94,Sch05}. For viscous compressible fluids, both well-prepared and ill-prepared limits have been studied in \cite{LM98,DG99,DGLM99,Dan05}. In the ill-prepared case, the main additional difficulty is the fast acoustic component, whose local decay on unbounded domains follows from dispersive estimates; see \cite{Uka86,MS01,Dan05,FKM19}.  Relative entropy methods provide a robust framework for stability, weak--strong uniqueness, and singular limits of compressible fluid systems; see \cite{FJN12,FN17}.

In the current work, both mechanisms enter naturally. The uniform estimate in Theorem \ref{thm:global} gives the scaled density control $\frac1\e\|\rho^\e-1\|_{H^s}$ together with the fractional alignment dissipation $\|\Lambda^\alpha u^\e\|_{H^s}$. For general small initial data satisfying the uniform assumptions of Theorem \ref{thm:global}, we decompose the velocity into its divergence-free and gradient components. The divergence-free component is locally compact by the projected equation, while the gradient component satisfies a fast acoustic system. Frequency-localized Strichartz estimates show that this acoustic component disperses locally in space-time as $\e\to0$. This yields subsequential strong convergence toward a distributional solution of the fractional incompressible Navier--Stokes system, without requiring well-prepared initial data.

For well-prepared data, we further use the relative energy
\[
\mathscr{H}^\e(t):= \frac12\intr \rho^\e |u^\e-v|^2\,\dx +\frac{1}{\e^2(\gamma-1)} \intr \lt[(\rho^\e)^\gamma-1-\gamma(\rho^\e-1)\rt] \dx,
\]
where $v$ is a sufficiently regular solution to the fractional incompressible Navier--Stokes system. The relative-energy estimate identifies the subsequential limit with $v$ and yields strong convergence in $L^2(0,T;\dot H^\alpha)$. Compared with the classical low Mach number limit, the additional difficulty in both arguments is the nonlinear commutator structure of the Euler--alignment force. To the best of our knowledge, such a low Mach number limit for the compressible Euler--alignment system with hypersingular fractional alignment has not been established previously.

To avoid introducing a separate notation for the logarithmic pressure potential, we state the incompressible limit theorem for $\gamma>1$. The case $\gamma=1$ can be treated in the same way by replacing the pressure potential with the corresponding logarithmic one.

We denote by
\[
\bbp:=I-\nabla\Delta^{-1}\nabla\cdot, \quad \bbq:=\nabla\Delta^{-1}\nabla\cdot
\]
the Leray projection onto divergence-free vector fields and the complementary gradient projection, respectively.

\begin{theorem}\label{thm:incomp-limit}
Let $d\ge2$, $T>0$, $\gamma\ge1$, and let $s$ satisfy \eqref{eq:s-ass}. For each $0<\e\le1$, let $(\rho^\e,u^\e)$ be the global strong solution to \eqref{main_eq_e} given by Theorem \ref{thm:global}. Assume that
\[
\sup_{0<\e\le1}\lt( \frac{\|\rho_0^\e-1\|_{H^s}}{\e}+\|u_0^\e\|_{H^s}\rt)\le\eta_0.
\]

Then, up to a subsequence, there exist $u_0\in H^s(\R^d)$ and $u\in L^\infty(0,T;H^s(\R^d))\cap L^2(0,T;H^{s+\alpha}(\R^d))$ such that
\[
u_0^\e \rightharpoonup u_0 \quad \mbox{weakly in } H^s(\R^d)
\quad \text{and} \quad
\rho^\e -1 \to 0 \quad \mbox{strongly in } L^\infty(0,T;H^s(\R^d)).
\]
Moreover, for every $0<\delta<s$,
\[
\bbq u^\e \to 0 \quad \mbox{strongly in } L^2(0,T;H^{s-\delta}_{loc}(\R^d))
\quad \text{and} \quad
u^\e\to u\quad\mbox{strongly in }L^2(0,T;H^{s-\delta}_{loc}(\R^d)).
\]
The limit $u$ is divergence-free and is a distributional solution to
\[
\pa_t u+u\cdot\nabla u+\nabla\pi+\Lambda^{2\alpha}u=0,\quad \nabla\cdot u=0,
\]
with initial datum $\bbp u_0$.

In addition, let $v$ be a sufficiently regular solution to the fractional incompressible Navier--Stokes system \eqref{eq:frac_NS} on $[0,T]$ with initial datum $v_0$. Assume that
\[
\nabla v\in L^1(0,T;L^\infty(\R^d)),\quad v\in L^\infty(0,T;L^2(\R^d))\cap L^2(0,T;\dot H^\alpha(\R^d)),
\]
and that the associated pressure satisfies
\[
\pi\in W^{1,1}(0,T;L^2(\R^d)),\quad v\cdot\nabla\pi\in L^1(0,T;L^2(\R^d)).
\]
If the initial data are well prepared in the sense that
\[
\mathscr{H}^\e(0)\to0\quad\mbox{as } \e\to0,
\]
then
\[
\sup_{0\le t\le T}\mathscr{H}^\e(t)\to0\quad \text{and} \quad u^\e\to v\quad\mbox{in }  L^2(0,T;\dot H^\alpha(\R^d)) \quad  \mbox{as } \e\to0.
\]
\end{theorem}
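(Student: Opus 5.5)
The proof splits into two parts: a compactness argument for general (possibly ill-prepared) data, and a relative-energy argument under well preparedness.

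\emph{Part 1: compactness.} Theorem \ref{thm:global} gives, uniformly in $\e$, the bound $\|h^\e\|_{L^\infty(0,T;H^s)}\le C\eta_0\e$. This already yields $\rho^\e-1\to0$ strongly in $L^\infty(0,T;H^s)$. It also gives uniform bounds on $u^\e$ in $L^\infty(0,T;H^s)\cap L^2(0,T;H^{s+\alpha})$, from which we extract weak-$*$ limits $u$ and $u_0$.

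For the solenoidal part, apply $\bbp$ to the momentum equation in \eqref{h-u-system}. The singular pressure term is $\frac{\gamma}{\e^2}(1+h)^{\gamma-2}\nabla h=\frac1{\e^2}\nabla P(h)$ with $P(h)=O(h)$. Since this is an exact gradient, $\bbp$ annihilates it. The remaining terms, namely $u\cdot\nabla u$, $\Lambda^{2\alpha}u$ and the commutator $-\Lambda^{2\alpha}(hu)+u\Lambda^{2\alpha}h$, are bounded in $L^2(0,T;H^{s-1-\dots})$ by the product and commutator estimates used for Theorem \ref{thm:global}. Hence $\pa_t\bbp u^\e$ is bounded in $L^2(0,T;H^{-m})$ for some $m$. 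Aubin--Lions then gives $\bbp u^\e\to u$ strongly in $L^2(0,T;H^{s-\delta}_{loc})$.

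For the gradient part, write $\bbq u^\e=\nabla\psi^\e$. The pair $(h^\e/\e,\e\,\pa_t\text{-scaled }\bbq u^\e)$ satisfies an acoustic wave system
\[
\e\pa_t (h^\e/\e)+\nabla\cdot\bbq u^\e=\e F^\e,\qquad \e\pa_t\bbq u^\e+\gamma\nabla(h^\e/\e)=\e G^\e,
\]
with forcing $F^\e,G^\e$ uniformly bounded in $L^2(0,T;H^{s-1})$ plus a damping $-\e\Lambda^{2\alpha}\bbq u^\e$. I would localize in frequency with Littlewood--Paley to $2^{-j}\le|\xi|\le 2^{j}$ and apply Strichartz estimates for the wave group $e^{\pm it\sqrt\gamma|D|/\e}$, as in \cite{Dan05}. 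This gives
\[
\|\dot S_j\bbq u^\e\|_{L^2(0,T;L^\infty)}\lesssim C_j\e^{1/4}\cdot(\text{uniform bounds}).
\]
The high and low frequencies are small uniformly in $\e$, by the $H^{s}$ bound and by local integrability (which uses $d\ge2$), respectively. Therefore $\bbq u^\e\to0$ in $L^2(0,T;L^2_{loc})$, and interpolating with the uniform $H^s$ bound upgrades this to $H^{s-\delta}_{loc}$.

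To pass to the limit in the weak form of the $\bbp$-projected equation tested against divergence-free test functions, we treat the nonlinear terms as follows:
\begin{itemize}
\item The term $\bbp(u^\e\cdot\nabla u^\e)$ converges by strong local convergence, except for $\bbp\nabla\cdot(\bbq u^\e\otimes\bbq u^\e)$. Its limit vanishes either by the dispersive decay above or by the classical identity $\nabla\cdot(\nabla\psi\otimes\nabla\psi)=\frac12\nabla|\nabla\psi|^2+\Delta\psi\nabla\psi$ combined with the acoustic equation.
\item The commutator terms are $O(\e)$ in $L^2(0,T;H^{-m})$.
\end{itemize}
The initial datum is identified as $\bbp u_0$ because $\bbp u^\e$ is equicontinuous in time.

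\emph{Part 2: relative energy.} Compute $\frac{d}{dt}\mathscr H^\e$ using the energy identity of \eqref{main_eq_e} together with the equation for $v$. The alignment energy is $\frac12\iint\phi(x-y)\rho^\e(x)\rho^\e(y)|u^\e(x)-u^\e(y)|^2$. The cross term with $v$ produces
\[
\frac12\iint\phi\,\rho\rho\,|(u-v)(x)-(u-v)(y)|^2
\]
plus remainders. Since $\rho^\e\ge\frac12$ by smallness, this is comparable to $\|\Lambda^\alpha(u^\e-v)\|_{L^2}^2$, up to errors of the form $\iint\phi\,(\rho\rho-1)(\cdots)$. Those errors are bounded by $\|h^\e\|_{L^\infty}\lesssim\e$ times $\|\Lambda^\alpha u^\e\|\,\|\Lambda^\alpha v\|$, which are integrable by \eqref{main_uniform_est} and the assumption on $v$.

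The convective term gives $\int\rho|u-v|^2|\nabla v|\le\|\nabla v\|_{L^\infty}\mathscr H^\e$. The pressure term $\int\rho^\e(u^\e-v)\cdot\nabla\pi$ is rewritten via $\rho^\e=1+h^\e$ and the continuity equation as
\[
-\int h^\e\pa_t\pi\;-\;\frac{d}{dt}\int h^\e\pi\;+\;\int h^\e v\cdot\nabla\pi\;+\;\cdots.
\]
All of these are $O(\e)$ by the $W^{1,1}L^2$ hypotheses on $\pi$. Gronwall then gives
\[
\mathscr H^\e(t)+c\int_0^t\|\Lambda^\alpha(u^\e-v)\|^2\le e^{C\int\|\nabla v\|_\infty}\bigl(\mathscr H^\e(0)+C\e\bigr)\to0 .
\]

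\emph{Main obstacle.} I expect the hardest step to be the local dispersive decay of $\bbq u^\e$ in the presence of the nonlinear, nonlocal commutator forcing at low regularity. Concretely, this means checking that the forcing $F^\e,G^\e$ is bounded in a space compatible with the Strichartz estimate when $s$ is only slightly above $\frac d2+1-2\alpha$. I would first try to estimate the forcing in $L^1(0,T;H^{s-1-\alpha})$, using the dissipation bound in \eqref{main_uniform_est}.
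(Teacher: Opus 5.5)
Your proposal is correct and has the same architecture as the paper's proof. The steps match one for one:
\begin{itemize}
\item Aubin--Lions gives compactness of $\bbp u^\e$; the paper bounds $\pa_t\bbp u^\e$ in $L^\infty(0,T;H^{-\alpha})$.
\item Frequency-localized wave Strichartz estimates handle the acoustic pair $(\sqrt\gamma\,h^\e/\e,\bbq u^\e)$, with the low and high frequencies controlled by Bernstein and by the uniform $H^s$ bound.
\item Interpolation upgrades $L^2_{loc}$ convergence to $H^{s-\delta}_{loc}$.
\item For well-prepared data, a relative-energy/Gr\"onwall argument closes, with $\int\rho^\e(u^\e-v)\cdot\nabla\pi\,\dx$ made $O(\e)$ by integrating by parts in time.
\end{itemize}

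What you flag as the main obstacle is not one. After applying $\chi(D)$ (or $\dot S_j$), the forcing only needs to be bounded in $L^1(0,T;H^{-m})$ for some $m$, and crude $L^\infty_t$ bounds give this. The paper puts the damping $-\Lambda^{2\alpha}\bbq u^\e$ into $G^\e$. It bounds the rescaled commutator $-\Lambda^{2\alpha}(r^\e u^\e)+u^\e\Lambda^{2\alpha}r^\e$, with $r^\e=h^\e/\e$, in $H^{-\alpha}$ by $C\|r^\e\|_{H^s}\|u^\e\|_{H^s}$. So the threshold $s>\frac d2+1-2\alpha$ plays no role in the dispersive step. Likewise, once $\bbq u^\e\to0$ locally strongly, $u^\e\to u$ locally strongly and $\bbq u^\e\otimes\bbq u^\e$ needs no separate treatment.

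The one real variation is how you handle the alignment terms in Part 2.
\begin{itemize}
\item You keep the symmetric double-integral form, take coercivity from $\rho^\e(x)\rho^\e(y)\ge\frac14$, and bound errors by $\|\rho^\e(x)\rho^\e(y)-1\|_{L^\infty}\lesssim\e$. The paper uses this trick only in Part 1, to pass to the limit in the alignment term.
\item The paper instead splits the force into $-\Lambda^{2\alpha}u^\e$ plus the commutator $-\Lambda^{2\alpha}(h^\e u^\e)+u^\e\Lambda^{2\alpha}h^\e$. It then bounds $\int h^\e(u^\e-v)\cdot\Lambda^{2\alpha}(u^\e-v)\,\dx$ and the commutator contribution by fractional product estimates.
\end{itemize}
Your route is more elementary. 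It also avoids having to absorb $C\e\|\Lambda^\alpha(u^\e-v)\|_{L^2}^2$ by taking $\e$ small.

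However, not every remainder is of the form $\iint\phi\,(\rho^\e\rho^\e-1)(\cdots)$. The dissipation of $v$ is tested against $\rho^\e(x)(u^\e-v)(x)$, which carries a one-point weight. This leaves the term $\int h^\e(u^\e-v)\cdot\Lambda^{2\alpha}v\,\dx$, which is not a symmetric difference-quotient integral. It needs derivative control on $h^\e$, not just $\|h^\e\|_{L^\infty}$. It is bounded, as in \eqref{eq:hu-low}, by
\[
\|\Lambda^\alpha(h^\e(u^\e-v))\|_{L^2}\|\Lambda^\alpha v\|_{L^2}\lesssim\|h^\e\|_{H^s}\|\Lambda^\alpha(u^\e-v)\|_{L^2}\|\Lambda^\alpha v\|_{L^2},
\]
so the argument closes.

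Finally, the term $\int h^\e v\cdot\nabla\pi\,\dx$ uses the hypothesis $v\cdot\nabla\pi\in L^1(0,T;L^2)$, not only $\pi\in W^{1,1}(0,T;L^2)$.
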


\begin{remark}
The first part of Theorem \ref{thm:incomp-limit} does not require the initial data to be well prepared with respect to a prescribed incompressible flow. In particular, both the scaled density fluctuation $(\rho_0^\e-1)/\e$ and the gradient component $\bbq u_0^\e$ may remain of order one as $\e\to0$. The latter generates fast acoustic oscillations, which disperse locally in the whole space $\R^d$. This yields subsequential strong convergence locally in space-time. Under the additional well-preparedness condition $\mathscr{H}^\e(0)\to0$, the relative-energy argument identifies the limit with the prescribed sufficiently regular solution and upgrades the convergence to the global strong convergence stated above.
\end{remark}

We close the introduction by describing the organization of the paper. In Section \ref{sec:low-alpha}, we derive the uniform a priori estimates in the low-order fractional regime $0<\alpha\le\frac12$. In Section \ref{sec:high-alpha}, we establish the corresponding estimates in the higher-order regime $\frac12<\alpha<1$. Section \ref{sec:global} is devoted to local and global well-posedness. Finally, in Section \ref{sec:limit}, we prove the low Mach number limit. The ill-prepared case is treated by acoustic dispersion and local compactness, while the well-prepared case follows from a relative-energy argument.

%
%
%
%
%
%

\section{Uniform estimates in the low-order fractional regime}\label{sec:low-alpha}

In this section, we prove the uniform a priori estimate in the low-order fractional alignment regime
\[
0<\alpha\le \frac12.
\]
Here the alignment dissipation has order $2\alpha\le1$, which is below the first-order threshold. Throughout this section, we assume
\[
s>\frac d2+1-2\alpha.
\]
We use the notation $h=\rho-1$ and the formulation \eqref{h-u-system}.  The main point in this regime is that the alignment dissipation has order
$2\alpha\le1$, while the singular pressure term provides the density dissipation
at the level
\[
\frac1{\e^2}\|\nabla h\|_{H^{s+\alpha-1}}^2.
\]
This allows us to close the uniform estimate below the standard hyperbolic threshold $s>\frac d2+1$.

\begin{proposition}\label{prop:uni-apri-low}
Let $d\ge2$, $0<\alpha\le\frac12$, and $\gamma\ge1$. Assume that
\[
s>\frac d2+1-2\alpha.
\]
Let $(h,u)$ be a smooth solution to \eqref{h-u-system} on $[0,T]$. Suppose that
\bq\label{small-assumption-low}
\sup_{0\le t\le T} \lt( \frac{\|h(t)\|_{H^s}}{\e}+\|u(t)\|_{H^s} \rt) \le \eta
\eq
for some sufficiently small $\eta>0$. Then, for all $t\in[0,T]$,
\[
\frac{\|h(t)\|_{H^s}^2}{\e^2} +\|u(t)\|_{H^s}^2 +\int_0^t\lt( \frac{\|\nabla h(\tau)\|_{H^{s+\alpha-1}}^2}{\e^2} +\|\Lambda^\alpha u(\tau)\|_{H^s}^2 \rt)\rd\tau \le C\lt( \frac{\|h_0\|_{H^s}^2}{\e^2} +\|u_0\|_{H^s}^2 \rt),
\]
where $C>0$ is independent of $T$ and $\e$.
\end{proposition}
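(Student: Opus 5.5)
We will prove the estimate by combining a symmetrized $H^s$ energy estimate for $(h,u)$ with a hypocoercive cross term that produces the density dissipation.

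\textbf{Symmetrization.} Write the momentum pressure term as $\frac{\gamma}{\e^2}(1+h)^{\gamma-2}\nabla h=\frac{1}{\e}\nabla q$ with a new unknown $q=\frac{1}{\e}Q(h)$, where $Q$ is chosen so that $Q'(h)$ makes the pair $(q,u)$ symmetric. For instance $Q(h)=\frac{2\sqrt\gamma}{\gamma-1}\big((1+h)^{\frac{\gamma-1}{2}}-1\big)$, with the logarithm when $\gamma=1$. Then $\|q\|_{H^s}\simeq\|h\|_{H^s}/\e$ under \eqref{small-assumption-low}, and the equations read
\[
\pa_t q+u\cdot\nabla q+\frac{c(h)}{\e}\nabla\cdot u=0,\qquad \pa_t u+u\cdot\nabla u+\frac{c(h)}{\e}\nabla q=-\Lambda^{2\alpha}u-[\Lambda^{2\alpha},h]u+\ldots,
\]
with $c(h)=\sqrt\gamma(1+h)^{\frac{\gamma-1}{2}}$.

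\textbf{Energy part.} Apply $\Lambda^s$ and test with $(\Lambda^s q,\Lambda^s u)$. The singular $\e^{-1}$ terms cancel at leading order. Their remainders are commutators $[\Lambda^s,c(h)]$, which carry one derivative of $h$ and therefore a factor $\|\nabla h\|/\e$. Absorbing that factor into the density dissipation is the reason we need the cross term below.

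The dissipation $\|\Lambda^{s+\alpha}u\|^2$ comes from $-\Lambda^{2\alpha}u$. The transport terms $\langle\Lambda^s(u\cdot\nabla u),\Lambda^s u\rangle$ are treated via a Kato--Ponce commutator placed so that only $s+1-2\alpha$ total derivatives fall on factors and $\alpha$ derivatives go to each dissipated factor. Concretely, we write $\langle[\Lambda^{s},u\cdot\nabla]u,\Lambda^s u\rangle$ and move $\Lambda^\alpha$ onto $\Lambda^s u$. The commutator costs $\|\nabla u\|_{L^\infty}$-type norms, and we replace these by $\|\Lambda^{\alpha}u\|_{H^{s}}$ using $s+\alpha>\frac d2+1-\alpha$. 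Every cubic term then becomes $\lesssim\eta\big(\|\Lambda^\alpha u\|_{H^s}^2+\e^{-2}\|\nabla h\|_{H^{s+\alpha-1}}^2\big)$.

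The alignment nonlinearity $-\Lambda^{2\alpha}(hu)+u\Lambda^{2\alpha}h=-[\Lambda^{2\alpha},u]h-h\Lambda^{2\alpha}u$ is estimated by splitting $\Lambda^{2\alpha}=\Lambda^\alpha\Lambda^\alpha$. One $\Lambda^\alpha$ goes onto the test function. For the remaining commutator $[\Lambda^{2\alpha},u]h$ at level $s-\alpha$ we use a fractional commutator estimate that puts $\Lambda^{2\alpha-1}\nabla h\in H^{s-\alpha}$. This is controlled by $\|\nabla h\|_{H^{s+\alpha-1}}$, and the step uses $2\alpha\le1$ (so that $\Lambda^{2\alpha-1}$ is of nonpositive order), which is where $\alpha\le\frac12$ enters. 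The factor $\|h\|$ appearing next to it is small, of order $\e\eta$, which cancels the $\e^{-1}$.

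\textbf{Density dissipation.} Add $\kappa\,\e\langle\Lambda^{s+\alpha-1}u,\Lambda^{s+\alpha-1}\nabla q\rangle$ for small $\kappa$; note $s+\alpha-1\le s-\frac12$. Differentiating in time, the pressure term gives $\frac{\kappa}{\e^0}\|\nabla q\|_{H^{s+\alpha-1}}^2\simeq\kappa\e^{-2}\|\nabla h\|_{H^{s+\alpha-1}}^2$, the rescaling working out because of the $\e$ weight. The divergence term gives $-\kappa\|\nabla\cdot u\|_{H^{s+\alpha-1}}^2\le\kappa\|\Lambda^\alpha u\|_{H^s}^2$, which is absorbed. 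The damping term $\kappa\e\langle\Lambda^{s+3\alpha-1}u,\Lambda^{s+\alpha-1}\nabla q\rangle$ is bounded by Young's inequality, since $s+3\alpha-1\le s+\alpha$ for $\alpha\le\frac12$. The nonlinear terms are cubic and small. The choice $\sigma_\alpha=s+\alpha-1$ is exactly the largest level at which $\Lambda^{2\alpha}u$ remains controlled by $\Lambda^\alpha u\in H^s$.

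\textbf{Closing.} Summing the two parts gives
\[
\frac{\rd}{\dt}\mathcal E+c\,\mathcal D\le C\eta\,\mathcal D,\qquad \mathcal E\simeq\frac{\|h\|_{H^s}^2}{\e^2}+\|u\|_{H^s}^2 .
\]
The equivalence holds because $\kappa$ is small. Choosing $\eta$ small and integrating in time yields the proposition, with constants independent of $T$ and $\e$. Low frequencies need no separate treatment, since $\mathcal D$ contains only homogeneous-above-$\alpha$ pieces and we never need $L^2$ decay.

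\textbf{Main obstacle.} The hardest step is the commutator $[\Lambda^s,c(h)]\nabla q/\e$ together with $[\Lambda^{2\alpha},u]h$ at the low regularity $s>\frac d2+1-2\alpha$. Here $\nabla u$ need not lie in $L^\infty$ pointwise from $H^s$ alone, so every $L^\infty$ norm must be taken from dissipated quantities, namely $\|\nabla u\|_{L^\infty}\lesssim\|\Lambda^\alpha u\|_{H^s}$ and $\|\nabla h\|_{L^\infty}\lesssim\|\nabla h\|_{H^{s+\alpha-1}}$. These embeddings need $s+\alpha-1>\frac d2-\alpha$, which is exactly the hypothesis. In the integrated estimate, the top-order transport term $u\cdot\nabla\Lambda^s q$ is handled by integrating by parts to produce $\nabla\cdot u$, which is then paired as above.
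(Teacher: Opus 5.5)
Your overall architecture matches the paper's: a symmetrized top-order energy (your change of variables $q$ plays the role of the paper's weight $(1+h)^{\gamma-3}$), plus a cross term at level $s+\alpha-1$ that produces $\e^{-2}\|\nabla h\|_{H^{s+\alpha-1}}^2$. However, the step you identify as the main obstacle rests on embeddings that are false in the regime this proposition is about. The bound $\|\nabla u\|_{L^\infty}\lesssim\|\Lambda^\alpha u\|_{H^s}$ needs $s+\alpha>\frac d2+1$, because the high-frequency part of $\int|\xi||\hat u|\,\rd\xi$ is controlled by $\||\xi|^{s+\alpha}\hat u\|_{L^2}$ only when $2(s+\alpha)-2>d$. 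Likewise, $\|\nabla h\|_{L^\infty}\lesssim\|\nabla h\|_{H^{s+\alpha-1}}$ needs $s+\alpha-1>\frac d2$. Both conditions amount to $s>\frac d2+1-\alpha$, not to $s+\alpha-1>\frac d2-\alpha$ as you claim. Every estimate you route through Kato--Ponce with $L^\infty$ endpoints therefore closes only for $s>\frac d2+1-\alpha$. This affects $[\Lambda^s,u\cdot\nabla]u$, $[\Lambda^s,u\cdot\nabla]q$, $\e^{-1}[\Lambda^s,c(h)]\nabla q$, $\int(\nabla\cdot u)|\Lambda^s q|^2\,\dx$, and the commutator $[\Lambda^{2\alpha},u]h$, for which you invoke the same $L^\infty$ bounds. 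The threshold $s>\frac d2+1-\alpha$ is exactly the ``direct'' one that this proposition is designed to improve. So in the window $\frac d2+1-2\alpha<s\le\frac d2+1-\alpha$, which is the whole content of the statement, your argument does not close.

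The missing idea is to bound each cubic term by (energy)$\times$(dissipation) without ever putting a full first derivative in $L^\infty$.

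For plain products, the paper uses H\"older pairs such as $L^{d/\alpha}\times L^{\frac{2d}{d-2\alpha}}$, for example $\|\nabla u\|_{L^{d/\alpha}}\|\Lambda^s u\|_{L^2}\|\Lambda^s u\|_{L^{\frac{2d}{d-2\alpha}}}$. Here $\|\nabla u\|_{L^{d/\alpha}}\lesssim\|\Lambda^{\frac d2+1-\alpha}u\|_{L^2}$ is admissible exactly because $\frac d2+1-\alpha<s+\alpha$.

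For commutators, the paper first bounds the symbols pointwise, for instance
$\bigl||\xi|^s\xi-|\eta|^s\eta-|\xi-\eta|^s(\xi-\eta)\bigr|\lesssim|\eta|^s|\xi-\eta|+|\eta||\xi-\eta|^s$. It then applies Young's convolution inequality with $\frac1p+\frac1{p'}=\frac32$. Lemma~\ref{lem:low-four-inte} controls $\||\xi|^s\hat f\|_{L^p_\xi}$, $\||\xi|\hat f\|_{L^{p'}_\xi}$ and $\||\xi|^{2\alpha}\hat f\|_{L^{p'}_\xi}$ by the dissipation. It does so by interpolating between $\||\xi|^{s+\alpha}\hat f\|_{L^2_\xi}$ and $\||\xi|^{1-\alpha}\hat f\|_{L^1_\xi}$, which places only $1-\alpha$ derivatives in the Wiener algebra; this is the one place where $s>\frac d2+1-2\alpha$ enters. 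Two factors then come from the dissipation, and the third ($\|\Lambda^s\cdot\|_{L^2}$ or $\|\hat h\|_{L^1_\xi}$) is the small energy factor. The alignment term is treated the same way after splitting it into $\sfI_5$ and $\sfI_6$, where the top-order pieces $u\Lambda^{s+2\alpha}h$ cancel.

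Two smaller points:
\begin{itemize}
\item Your identity for the alignment force is off. In fact $-\Lambda^{2\alpha}(hu)+u\Lambda^{2\alpha}h=-[\Lambda^{2\alpha},u]h$, with no extra $-h\Lambda^{2\alpha}u$.
\item The statement uses inhomogeneous norms: the $L^2$ parts of the energy, and $\e^{-2}\|\nabla h\|_{L^2}^2$ in the dissipation. These require the zeroth-order energy estimate and the low-order cross term $-\int h\nabla\cdot u\,\dx$. So low frequencies do need separate treatment, unless you work with $\langle\Lambda\rangle^s$ throughout.
\end{itemize}
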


By \eqref{small-assumption-low} and Sobolev embedding, $\|h\|_{L^\infty}$ is sufficiently small. Hence $1+h$ is bounded away from zero and infinity. We shall use this fact throughout the section without further comment.

We recall the following product and commutator estimates \cite{CTT21, Li19, MTX21}. Let $\beta >0$ and $1<r<\infty$. Suppose that
\[
1<p_i,q_i\le\infty,\quad  \frac1r=\frac1{p_i}+\frac1{q_i},\quad i=1,2.
\]
Then
\bq\label{eq:frac_prod1}
\|\Lambda^\beta(fg)\|_{L^r} \le C\lt( \|\Lambda^\beta f\|_{L^{p_1}}\|g\|_{L^{q_1}} + \|f\|_{L^{p_2}}\|\Lambda^\beta g\|_{L^{q_2}} \rt)
\eq
and when $\beta\in(0,1)$,
\bq\label{eq:frac_prod2}
\|\Lambda^\beta(fg)-f\Lambda^\beta g - g\Lambda^\beta f\|_{L^r} \le C \|\Lambda^{\beta-\sigma} f\|_{L^{p_1}}\|\Lambda^\sigma g\|_{L^{q_1}},
\eq
for $\sigma \in [0,\beta]$.

Moreover, since $\|h\|_{L^\infty}$ is small, we get
\bq\label{eq:moser_den}
\|\Lambda^\beta((1+h)^{\gamma-2})\|_{L^2} \le C\|\Lambda^\beta h\|_{L^2}.
\eq

%
%
%
%
%
%

\subsection{Fourier interpolation estimates}

We provide the Fourier interpolation estimates used repeatedly in the low-order fractional regime.  

\begin{lemma} \label{lem:low-four-inte}
Let $d\ge2$, $0<\alpha\le\frac12$, and $s>\frac d2+1-2\alpha$. Let
\[
p:=\frac{2s+4\alpha-2}{s+3\alpha-1}, \quad p':=\frac{2s+4\alpha-2}{2s+3\alpha-2}.
\]
Then, for any sufficiently regular $f$, we have
\bq\label{low-four-d-gen}
\||\xi|^s\hat f\|_{L_\xi^p} +\||\xi|\hat f\|_{L_\xi^{p'}} +\||\xi|^{2\alpha}\hat f\|_{L_\xi^{p'}} \le C\|\nabla f\|_{H^{s+\alpha-1}}
\eq
and
\bq\label{low-four-Hs-gene} 
\|\hat f\|_{L_\xi^1} +\||\xi|^{s+\alpha-1}\hat f\|_{L_\xi^p} \le C\|f\|_{H^s}.
\eq
\end{lemma}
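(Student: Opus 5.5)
The plan is to prove every bound in the lemma by the same device: Hölder's inequality in frequency space, comparing against a weighted $L^2_\xi$ norm through Plancherel. By Plancherel,
\[
\|\nabla f\|_{H^{s+\alpha-1}}\simeq \|w\hat f\|_{L^2_\xi},\quad w(\xi):=|\xi|\langle\xi\rangle^{s+\alpha-1},
\qquad
\|f\|_{H^s}\simeq\|\langle\xi\rangle^s\hat f\|_{L^2_\xi}.
\]
For a target norm $\||\xi|^m\hat f\|_{L^q_\xi}$ with $1\le q<2$, write $|\xi|^m\hat f=(|\xi|^m/w)\,(w\hat f)$ and apply Hölder with $\frac1q=\frac1r+\frac12$:
\[
\||\xi|^m\hat f\|_{L^q_\xi}\le \big\||\xi|^m/w\big\|_{L^r_\xi}\|w\hat f\|_{L^2_\xi}.
\]
Each estimate then reduces to checking that $|\xi|^m/w\in L^r(\R^d)$. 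This holds iff the singularity at the origin is integrable, i.e.\ $(1-m)r<d$, and the tail decays fast enough, i.e.\ $(s+\alpha-m)r>d$. The same scheme with $w$ replaced by $\langle\xi\rangle^s$ handles \eqref{low-four-Hs-gene}.

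First I will record the exponents. A direct computation gives
\[
\frac1p-\frac12=\frac{\alpha}{2s+4\alpha-2},\qquad \frac1{p'}-\frac12=\frac{s+\alpha-1}{2s+4\alpha-2}.
\]
So $r=\frac{2s+4\alpha-2}{\alpha}$ for $p$, and $r'=\frac{2s+4\alpha-2}{s+\alpha-1}$ for $p'$. Since $d\ge2$, the hypothesis gives $s>2-2\alpha\ge1$, so $s+\alpha-1>0$ and $1\le p,p'<2$. The key identity is $\alpha r=(s+\alpha-1)r'=2s+4\alpha-2$, and $2s+4\alpha-2>d$ is exactly the assumption $s>\frac d2+1-2\alpha$.

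Next I check the three terms of \eqref{low-four-d-gen}.
\begin{itemize}
\item For $m=s$ with exponent $p$: near $0$ the ratio is $|\xi|^{s-1}$, which is bounded. At infinity it is $|\xi|^{-\alpha}$, which lies in $L^r$ iff $\alpha r>d$, i.e.\ iff $2s+4\alpha-2>d$.
\item For $m=1$ with exponent $p'$: near $0$ the ratio is bounded. At infinity it is $|\xi|^{1-s-\alpha}$, and the condition $(s+\alpha-1)r'>d$ is again $2s+4\alpha-2>d$.
\item For $m=2\alpha$ with exponent $p'$: at infinity the ratio is $|\xi|^{\alpha-s}$. Since $\alpha\le\frac12$ we have $s-\alpha\ge s+\alpha-1$, so this case is easier than the previous one. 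Near $0$ the ratio is $|\xi|^{2\alpha-1}$, a genuine singularity, and I need $(1-2\alpha)r'<d$.
\end{itemize}

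The last condition is the only spot where I expect any care to be needed. Using $s+\alpha-1>1-\alpha$, I bound
\[
(1-2\alpha)r'=(1-2\alpha)\Big(2+\frac{2\alpha}{s+\alpha-1}\Big)<(1-2\alpha)\Big(2+\frac{2\alpha}{1-\alpha}\Big)=\frac{2(1-2\alpha)}{1-\alpha}<2\le d.
\]

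Finally, \eqref{low-four-Hs-gene} follows in the same way with weight $\langle\xi\rangle^s$. For the first term, $\|\hat f\|_{L^1_\xi}\le\|\langle\xi\rangle^{-s}\|_{L^2_\xi}\|f\|_{H^s}$ by Cauchy--Schwarz, and the first factor is finite since $s>\frac d2$. For the second term, Hölder with exponent $r$ requires $|\xi|^{s+\alpha-1}\langle\xi\rangle^{-s}\in L^r$. This ratio is bounded near $0$ and behaves like $|\xi|^{\alpha-1}$ at infinity, so it lies in $L^r$ iff $(1-\alpha)r>d$. That holds because
\[
(1-\alpha)r=\frac{1-\alpha}{\alpha}\,(2s+4\alpha-2)\ge 2s+4\alpha-2>d,
\]
using $\frac{1-\alpha}{\alpha}\ge1$ for $\alpha\le\frac12$. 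Summing the bounds gives both estimates.
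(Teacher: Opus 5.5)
Your proof is correct, but it is organized differently from the paper's.

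\textbf{The paper's route.} The paper never works with the explicit weight $w$. It first proves the multiplicative inequality $\||\xi|^a\hat f\|_{L^r_\xi}\le\||\xi|^b\hat f\|_{L^1_\xi}^{\frac{2-r}{r}}\,\||\xi|^{\frac{ar-b(2-r)}{2(r-1)}}\hat f\|_{L^2_\xi}^{\frac{2(r-1)}{r}}$, which is H\"older applied to $|\xi|^{ar}|\hat f|^r$ split into an $L^1$ piece and an $L^2$ piece. It then chooses $(a,b,r)=(s,1-\alpha,p)$, $(1,1-\alpha,p')$, $(2\alpha,\alpha,p')$, $(s+\alpha-1,0,p)$, so that the $L^2_\xi$ orders come out as $s+\alpha$, $s+\alpha$, $s+3\alpha-1$, $s+2\alpha-1$. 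Finally it bounds the $L^1_\xi$ factors separately, e.g. $\||\xi|^{1-\alpha}\hat f\|_{L^1_\xi}\le C\|\Lambda^{\frac d2+1-\alpha-\delta}f\|_{L^2}^{1/2}\|\Lambda^{\frac d2+1-\alpha+\delta}f\|_{L^2}^{1/2}$. The hypothesis $s>\frac d2+1-2\alpha$ enters only at this last step, through the requirement $\frac d2+1-\alpha+\delta\le s+\alpha$.

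\textbf{What your route changes.} Your single H\"older step against $w=|\xi|\langle\xi\rangle^{s+\alpha-1}$ (resp.\ $\langle\xi\rangle^s$) collapses the two stages into one integrability check. It also makes the design of $p,p'$ transparent. The identity $\alpha r=(s+\alpha-1)r'=2s+4\alpha-2$ shows that every tail condition is exactly the hypothesis. The roles of $d\ge2$ (the $|\xi|^{2\alpha-1}$ singularity at the origin) and of $\alpha\le\frac12$ (the factor $\frac{1-\alpha}{\alpha}\ge1$, and $s-\alpha\ge s+\alpha-1$) become explicit. In the paper they are folded into the requirement that all Sobolev orders lie in $[1,s+\alpha]$.

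\textbf{What the paper's route buys.} The same $L^1_\xi$--$L^2_\xi$ interpolation is reused verbatim for the exponents $q_i,q_i'$ in the higher-order regime and for $\tilde p,\tilde p'$ in the local theory.

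One cosmetic point: at $\alpha=\frac12$, the first strict inequality in your chain for $(1-2\alpha)r'$ is actually the equality $0=0$. The conclusion $(1-2\alpha)r'<d$ is then trivial, so nothing breaks.
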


\begin{proof}
By the definition of $p$ and $p'$, we have $p,p'\in(1,2)$ and $\frac1p+\frac1{p'}=\frac32$. We shall also use the identities
\[
\frac{sp-(1-\alpha)(2-p)}{2(p-1)}=s+\alpha, \quad \frac{p'-(1-\alpha)(2-p')}{2(p'-1)}=s+\alpha,
\]
and
\[
\frac{2\alpha p'-\alpha(2-p')}{2(p'-1)}=s+3\alpha-1.
\]
We first recall a simple interpolation inequality. Let $1<r<2$ and $a,b\ge0$. Then,
\[
\||\xi|^a\hat f\|_{L_\xi^r} \le \||\xi|^b\hat f\|_{L_\xi^1}^{\frac{2-r}{r}} \lt\| |\xi|^{\frac{ar-b(2-r)}{2(r-1)}}\hat f \rt\|_{L_\xi^2}^{\frac{2(r-1)}{r}}.
\]
Indeed, this follows from writing
\[
|\xi|^{ar}|\hat f|^r = \lt(|\xi|^b|\hat f|\rt)^{2-r} \lt( |\xi|^{\frac{ar-b(2-r)}{r-1}}|\hat f|^2 \rt)^{r-1}
\]
and applying H\"older's inequality.

We next estimate the $L^1_\xi$ factors that appear below. Choose $\delta>0$ sufficiently small so that
\[
0<\delta<\alpha, \quad \frac d2+1-\alpha+\delta\le s+\alpha.
\]
Then, for $\beta=1-\alpha$ and $\beta=\alpha$,
\[
\||\xi|^\beta\hat f\|_{L_\xi^1} \le C \|\Lambda^{\frac d2+\beta-\delta}f\|_{L^2}^{1/2} \|\Lambda^{\frac d2+\beta+\delta}f\|_{L^2}^{1/2}.
\]
The Sobolev orders $\frac d2+\beta\pm\delta$ can be chosen in the interval $[1,s+\alpha]$. Hence, we have
\[
\||\xi|^{1-\alpha}\hat f\|_{L_\xi^1} +\||\xi|^\alpha\hat f\|_{L_\xi^1} \le C\|\nabla f\|_{H^{s+\alpha-1}}.
\]

We now prove \eqref{low-four-d-gen}. Applying the interpolation inequality with $(a,b,r)=(s,1-\alpha,p)$, we obtain
\[
\||\xi|^s\hat f\|_{L_\xi^p} \le \||\xi|^{1-\alpha}\hat f\|_{L_\xi^1}^{\frac{2-p}{p}} \lt\| |\xi|^{s+\alpha}\hat f \rt\|_{L_\xi^2}^{\frac{2(p-1)}{p}}.
\]
The $L^2_\xi$ order is exactly $s+\alpha$, and thus it is controlled by $\|\nabla f\|_{H^{s+\alpha-1}}$. This implies
\[
\||\xi|^s\hat f\|_{L_\xi^p} \le C\|\nabla f\|_{H^{s+\alpha-1}}.
\]
Similarly, applying the interpolation inequality with $(a,b,r)=(1,1-\alpha,p')$, we get
\[
\||\xi|\hat f\|_{L_\xi^{p'}} \le \||\xi|^{1-\alpha}\hat f\|_{L_\xi^1}^{\frac{2-p'}{p'}} \lt\| |\xi|^{s+\alpha}\hat f \rt\|_{L_\xi^2}^{\frac{2(p'-1)}{p'}}.
\]
Thus,
\[
\||\xi|\hat f\|_{L_\xi^{p'}} \le C\|\nabla f\|_{H^{s+\alpha-1}}.
\]
Finally, applying the interpolation inequality with $(a,b,r)=(2\alpha,\alpha,p')$, we find
\[
\||\xi|^{2\alpha}\hat f\|_{L_\xi^{p'}} \le \||\xi|^\alpha\hat f\|_{L_\xi^1}^{\frac{2-p'}{p'}} \lt\| |\xi|^{s+3\alpha-1}\hat f \rt\|_{L_\xi^2}^{\frac{2(p'-1)}{p'}}.
\]
Since
\[
1 \le \frac d2+\alpha < s+3\alpha-1\le s+\alpha,
\]
this order also lies in $[1,s+\alpha]$. Thus, we get
\[
\||\xi|^{2\alpha}\hat f\|_{L_\xi^{p'}} \le C\|\nabla f\|_{H^{s+\alpha-1}}.
\]
This proves \eqref{low-four-d-gen}.

It remains to prove \eqref{low-four-Hs-gene}. Since $s>\frac d2$, we find $\|\hat f\|_{L_\xi^1}\le C\|f\|_{H^s}$. Moreover, applying the interpolation inequality with $(a,b,r)=(s+\alpha-1,0,p)$, we obtain
\[
\||\xi|^{s+\alpha-1}\hat f\|_{L_\xi^p} \le \|\hat f\|_{L_\xi^1}^{\frac{2-p}{p}} \lt\| |\xi|^{s+2\alpha-1}\hat f \rt\|_{L_\xi^2}^{\frac{2(p-1)}{p}}.
\]
Since $2\alpha\le1$, we obtain $s+2\alpha-1\le s$, and thus
\[
\||\xi|^{s+\alpha-1}\hat f\|_{L_\xi^p}
\le C\|f\|_{H^s}.
\]
This completes the proof.
\end{proof}

\begin{remark}\label{rem:low-four-cons}
Since $0<\alpha\le1$, we have
\[
\|\nabla f\|_{H^{s+\alpha-1}} \le C\|\Lambda^\alpha f\|_{H^s}.
\]
Thus, \eqref{low-four-d-gen} also gives
\[
\||\xi|^s\hat f\|_{L_\xi^p} +\||\xi|\hat f\|_{L_\xi^{p'}} +\||\xi|^{2\alpha}\hat f\|_{L_\xi^{p'}} \le C\|\Lambda^\alpha f\|_{H^s}.
\]
Moreover, let
\[
g=(1+h)^{\gamma-2}-1.
\]
If $\|h\|_{L^\infty}$ is sufficiently small, then the Moser estimate  \eqref{eq:moser_den} gives
\[
\|\nabla g\|_{H^{s+\alpha-1}} \le C\|\nabla h\|_{H^{s+\alpha-1}}, \quad \|g\|_{H^s}\le C\|h\|_{H^s}.
\]
Combining these bounds with \eqref{low-four-d-gen} and \eqref{low-four-Hs-gene}, we obtain
\[
\||\xi|\hat g\|_{L_\xi^{p'}} +\||\xi|^{2\alpha}\hat g\|_{L_\xi^{p'}} \le C\|\nabla h\|_{H^{s+\alpha-1}}
\]
and
\[
\|\hat g\|_{L_\xi^1} +\||\xi|^{s+\alpha-1}\hat g\|_{L_\xi^p} \le C\|h\|_{H^s}.
\]
\end{remark}

Finally, we provide the Sobolev embeddings used in the zeroth-order estimates:
\bq\label{low-phy-emb}
\|u\|_{L^{\frac{1}{\frac12-\frac\alpha d}}} \le C\|\Lambda^\alpha u\|_{L^2},
\quad
\|\Lambda^\alpha h\|_{L^{\frac d\alpha}} \le C\|\Lambda^\alpha h\|_{\dot H^{\frac d2-\alpha}} \le C\|h\|_{H^s},
\eq
and
\bq\label{low-den-emb}
\|\Lambda^{1+\alpha}h\|_{L^2} +\|\Lambda^{2\alpha}h\|_{L^{\frac d\alpha}} \le C\|\nabla h\|_{H^{s+\alpha-1}}.
\eq
Indeed, the first estimate in \eqref{low-den-emb} is immediate from $1+\alpha\le s+\alpha$. For the second one, Sobolev embedding gives
\[
\|\Lambda^{2\alpha}h\|_{L^{\frac d\alpha}} \le C\|\Lambda^{2\alpha}h\|_{\dot H^{\frac d2-\alpha}} = C\|\Lambda^{\frac d2+\alpha}h\|_{L^2}.
\]
Since $s>\frac d2$, we have $\frac d2+\alpha\le s+\alpha$, and hence this term is controlled by $\|\nabla h\|_{H^{s+\alpha-1}}$.

%
%
%
%
%
%

\subsection{Zeroth-order energy estimates}\label{ssec:low-zero}

We first derive the zeroth-order estimates in the regime $0<\alpha\le\frac12$. From the continuity equation, we compute
\begin{align*}
\frac12\frac\rd\dt\intr (1+h)^{\gamma-3}|h|^2\,\dx
&= \frac{\gamma-3}{2}\intr (1+h)^{\gamma-4}\pa_t h |h|^2\,\dx +\intr (1+h)^{\gamma-3}h\pa_t h\,\dx\\
&= -\frac{\gamma-3}{2}\intr (1+h)^{\gamma-4} \nabla\cdot((1+h)u)|h|^2\,\dx  -\intr (1+h)^{\gamma-3}h\nabla\cdot((1+h)u)\,\dx\\
&= -\frac{\gamma-3}{2}\intr (1+h)^{\gamma-4} \nabla h\cdot u |h|^2\,\dx  -\frac{\gamma-3}{2}\intr (1+h)^{\gamma-3} (\nabla\cdot u)|h|^2\,\dx\\
&\quad -\frac12\intr (1+h)^{\gamma-3}\nabla(h^2)\cdot u\,\dx -\intr (1+h)^{\gamma-2}h\nabla\cdot u\,\dx\\
&= \frac{4-\gamma}{2}\intr (1+h)^{\gamma-3} (\nabla\cdot u)|h|^2\,\dx +(\gamma-2)\intr (1+h)^{\gamma-3}(\nabla h\cdot u)h\,\dx \\
&\quad  +\intr (1+h)^{\gamma-2}\nabla h\cdot u\,\dx.
\end{align*}
Using the smallness of $h$ in $L^\infty$, we obtain
\[
\frac12\frac\rd\dt\intr (1+h)^{\gamma-3}|h|^2\,\dx  \le C\|\nabla u\|_{L^2}\|h\|_{L^4}^2 +C\|\nabla h\|_{L^2}\|u\|_{L^4}\|h\|_{L^4} +\intr (1+h)^{\gamma-2}\nabla h\cdot u\,\dx .
\]
We use the interpolation estimate
\[
\|f\|_{L^4} \le C\|\Lambda^{\frac d4}f\|_{L^2} \le C\|\Lambda^{\frac d2}f\|_{L^2}^{\frac12}\|f\|_{L^2}^{\frac12}.
\]
Since $s+\alpha>\frac d2$, we have
\[
\|\Lambda^{\frac d2}u\|_{L^2} \le C\|\Lambda^\alpha u\|_{H^s}, \quad \|\Lambda^{\frac d2}h\|_{L^2} \le C\|\nabla h\|_{H^{s+\alpha-1}}.
\]
This yields
\begin{align*}
\frac12\frac\rd\dt\intr (1+h)^{\gamma-3}|h|^2\,\dx
&\le C\|\Lambda^\alpha u\|_{H^s} \|\nabla h\|_{H^{s+\alpha-1}}\|h\|_{H^s}+C\|u\|_{H^s}\|\nabla h\|_{H^{s+\alpha-1}}^2   +\intr (1+h)^{\gamma-2}\nabla h\cdot u\,\dx.
\end{align*}
Multiplying by $\frac{\gamma}{\e^2}$ and using Young's inequality, we get
\bq\label{h-zero-est-low}
\begin{aligned}
\frac12\frac\rd\dt\lt( \frac{\gamma}{\e^2}\intr (1+h)^{\gamma-3}|h|^2\,\dx \rt)
&\le C\lt( \frac{\|h\|_{H^s}}{\e}+\|u\|_{H^s} \rt) \lt( \frac{\|\nabla h\|_{H^{s+\alpha-1}}^2}{\e^2} +\|\Lambda^\alpha u\|_{H^s}^2 \rt)\\
&\quad +\frac{\gamma}{\e^2} \intr (1+h)^{\gamma-2}\nabla h\cdot u\,\dx .
\end{aligned}
\eq
Next, we estimate the velocity. Taking the $L^2$ inner product of the second equation in \eqref{h-u-system} with $u$, we get
\begin{align*}
\frac12\frac\rd\dt\|u\|_{L^2}^2+\|\Lambda^\alpha u\|_{L^2}^2
&= -\intr (u\cdot\nabla u)\cdot u\,\dx -\frac{\gamma}{\e^2} \intr (1+h)^{\gamma-2}\nabla h\cdot u\,\dx\\
&\quad -\intr \Lambda^{2\alpha}(hu)\cdot u\,\dx +\intr |u|^2\Lambda^{2\alpha}h\,\dx .
\end{align*}
By H\"older's inequality and Sobolev embedding,
\[
\lt|\intr (u\cdot\nabla u)\cdot u\,\dx\rt| \le \|\nabla u\|_{L^{\frac d\alpha}}\|u\|_{L^2}\|u\|_{L^{\frac{1}{\frac12-\frac\alpha d}}} \le C\|u\|_{H^s}\|\Lambda^\alpha u\|_{H^s}^2.
\]
For the first nonlinear alignment term, we use
\[
\lt|\intr \Lambda^{2\alpha}(hu)\cdot u\,\dx\rt| \le \|\Lambda^\alpha(hu)\|_{L^2}\|\Lambda^\alpha u\|_{L^2}.
\]
By the fractional product estimate \eqref{eq:frac_prod1} and \eqref{low-phy-emb}, we find
\bq\label{eq:hu-low}
\|\Lambda^\alpha(hu)\|_{L^2} \le C\|h\|_{L^\infty}\|\Lambda^\alpha u\|_{L^2} +C\|u\|_{L^{\frac{1}{\frac12-\frac\alpha d}}}\|\Lambda^\alpha h\|_{L^{\frac d\alpha}} \le C\|h\|_{H^s}\|\Lambda^\alpha u\|_{H^s}.
\eq
Thus,
\[
\lt|\intr \Lambda^{2\alpha}(hu)\cdot u\,\dx\rt| \le C\|h\|_{H^s}\|\Lambda^\alpha u\|_{H^s}^2.
\]
For the remaining alignment term, we use duality, Sobolev embedding, and the fractional product estimate:
\begin{align*}
\lt|\intr |u|^2\Lambda^{2\alpha}h\,\dx\rt|
&= \lt|\intr \Lambda^\alpha h\,\Lambda^\alpha(|u|^2)\,\dx\rt| \le C \|\Lambda^\alpha h\|_{L^{\frac{d}{\alpha}}} \|\Lambda^\alpha u\|_{L^2} \|u\|_{L^{\frac{1}{\frac12-\frac\alpha d}}} \le C\|h\|_{H^s}\|\Lambda^\alpha u\|_{H^s}^2.
\end{align*}
Consequently, for $0<\e\le1$,
\bq\label{u-zero-est-low}
\begin{aligned}
\frac12\frac\rd\dt\|u\|_{L^2}^2+\|\Lambda^\alpha u\|_{L^2}^2
&\le C\lt( \frac{\|h\|_{H^s}}{\e}+\|u\|_{H^s} \rt) \lt( \frac{\|\nabla h\|_{H^{s+\alpha-1}}^2}{\e^2} +\|\Lambda^\alpha u\|_{H^s}^2 \rt)\\
&\quad -\frac{\gamma}{\e^2} \intr (1+h)^{\gamma-2}\nabla h\cdot u\,\dx .
\end{aligned}
\eq
Adding \eqref{h-zero-est-low} and \eqref{u-zero-est-low}, the pressure coupling
terms cancel and we obtain
\bq\label{zero-e-est-low}
\begin{aligned}
&\frac12\frac\rd\dt\lt( \frac{\gamma}{\e^2}\intr (1+h)^{\gamma-3}|h|^2\,\dx +\|u\|_{L^2}^2 \rt) +\|\Lambda^\alpha u\|_{L^2}^2 \\
&\quad \le C\lt( \frac{\|h\|_{H^s}}{\e}+\|u\|_{H^s}\rt) \lt( \frac{\|\nabla h\|_{H^{s+\alpha-1}}^2}{\e^2} +\|\Lambda^\alpha u\|_{H^s}^2 \rt).
\end{aligned}
\eq
It remains to recover a zeroth-order dissipation for $h$. We compute
\begin{align*}
-\frac\rd\dt\intr h\nabla\cdot u\,\dx &= \intr \nabla\cdot((1+h)u)\nabla\cdot u\,\dx -\intr h\nabla\cdot\pa_t u\,\dx\\
&= \intr \nabla\cdot(hu)\nabla\cdot u\,\dx +\|\nabla\cdot u\|_{L^2}^2\\
&\quad -\intr \nabla h\cdot\lt( u\cdot\nabla u + \frac{\gamma}{\e^2}(1+h)^{\gamma-2}\nabla h + \Lambda^{2\alpha}u  + \Lambda^{2\alpha}(hu) - u\Lambda^{2\alpha}h \rt)\dx .
\end{align*}
Thus, using the smallness of $h$ in $L^\infty$,
\begin{align*}
-\frac\rd\dt\intr h\nabla\cdot u\,\dx +\frac{\gamma}{\e^2}\|\nabla h\|_{L^2}^2 &\le C\|\nabla h\|_{L^2}\|u\|_{L^\infty}\|\nabla u\|_{L^2} +C\|h\|_{L^\infty}\|\nabla u\|_{L^2}^2\\
&\quad +\|\nabla \cdot u\|_{L^2}^2 +\frac{C\|h\|_{L^\infty}}{\e^2}\|\nabla h\|_{L^2}^2 +\|\nabla h\|_{L^2}\|\Lambda^{2\alpha}u\|_{L^2} + I,
\end{align*}
where
\[
I:=-\intr \nabla h\cdot \lt(\Lambda^{2\alpha}(hu)-u\Lambda^{2\alpha}h\rt)\dx .
\]
We estimate $I$ using $2\alpha\le1$. By duality and H\"older's inequality,
\[
I \le C\|\Lambda^{1+\alpha}h\|_{L^2}\|\Lambda^\alpha(hu)\|_{L^2} +C\|\nabla h\|_{L^2}\|u\|_{L^{\frac{1}{\frac12-\frac\alpha d} }}\|\Lambda^{2\alpha}h\|_{L^{\frac d\alpha}}.
\]
Using \eqref{low-phy-emb}  and the product estimate \eqref{eq:hu-low}, we obtain
\[
I \le C\|h\|_{H^s} \|\nabla h\|_{H^{s+\alpha-1}}\|\Lambda^\alpha u\|_{H^s}.
\]
Thus, by Young's inequality, and the smallness assumption \eqref{small-assumption-low}, we have
\bq\label{cro-zero-est-low}
\begin{aligned}
-\frac\rd\dt\intr h\nabla\cdot u\,\dx +\frac{\gamma}{2\e^2}\|\nabla h\|_{L^2}^2
&\le C\lt( \frac{\|h\|_{H^s}}{\e}+\|u\|_{H^s} \rt) \lt( \frac{\|\nabla h\|_{H^{s+\alpha-1}}^2}{\e^2} +\|\Lambda^\alpha u\|_{H^s}^2 \rt)\\
&\quad +C_1\lt( \|\Lambda^{2\alpha}u\|_{L^2}^2 +\|\nabla u\|_{L^2}^2 \rt).
\end{aligned}
\eq
This completes the zeroth-order estimates in the low-order fractional regime.

%
%
%
%
%
%

\subsection{High-order energy estimates}\label{ssec:low-h-ene}
 
We next derive the high-order energy estimates. We begin with the estimate for $h$. From the continuity equation,
\begin{align*}
\frac12\frac\rd\dt\intr (1+h)^{\gamma-3}|\Lambda^s h|^2\,\dx &= -\frac{\gamma-3}{2}\intr (1+h)^{\gamma-4} \nabla\cdot((1+h)u)|\Lambda^s h|^2\,\dx\\
&\quad -\intr (1+h)^{\gamma-3}\Lambda^s h \Lambda^s\lt[\nabla\cdot((1+h)u)\rt] \dx .
\end{align*}
We rewrite the right-hand side as
\begin{align*}
\frac12\frac\rd\dt\intr (1+h)^{\gamma-3}|\Lambda^s h|^2\,\dx &= \frac{4-\gamma}{2}\intr (1+h)^{\gamma-3} (\nabla\cdot u)|\Lambda^s h|^2\,\dx\\
&\quad -\intr (1+h)^{\gamma-3}\Lambda^s h \lt[ \Lambda^s\nabla\cdot((1+h)u) -u\cdot\Lambda^s\nabla h -(1+h)\Lambda^s\nabla\cdot u \rt] \dx\\
&\quad +(\gamma-2)\intr  (1+h)^{\gamma-3} (\nabla h\cdot\Lambda^s u)\Lambda^s h\,\dx\\
&\quad +\intr (1+h)^{\gamma-2}\nabla\Lambda^s h\cdot\Lambda^s u\,\dx .
\end{align*}
Using the smallness of $h$ in $L^\infty$, Sobolev embedding, and \eqref{low-phy-emb}, we have
\begin{align*}
\lt|\intr (1+h)^{\gamma-3} (\nabla\cdot u)|\Lambda^s h|^2\,\dx\rt|
&\le C\|\nabla u\|_{L^{\frac d\alpha}} \|\Lambda^s h\|_{L^2} \|\Lambda^s h\|_{L^{\frac{1}{\frac12-\frac\alpha d}}} \le C\|\Lambda^\alpha u\|_{H^s} \|\nabla h\|_{H^{s+\alpha-1}}\|h\|_{H^s}
\end{align*}
and
\begin{align*}
\lt|\intr (1+h)^{\gamma-3}
(\nabla h\cdot\Lambda^s u)\Lambda^s h\,\dx\rt|
&\le C\|\nabla h\|_{L^{\frac d\alpha}}
\|\Lambda^s u\|_{L^{\frac{1}{\frac12-\frac\alpha d}}}
\|\Lambda^s h\|_{L^2} \le C\|\Lambda^\alpha u\|_{H^s}
\|\nabla h\|_{H^{s+\alpha-1}}\|h\|_{H^s}.
\end{align*}
It remains to estimate the commutator term. We claim that
\[
\|\Lambda^s\nabla\cdot((1+h)u) -u\cdot\Lambda^s\nabla h -(1+h)\Lambda^s\nabla\cdot u\|_{L^2} \le C\|\Lambda^\alpha u\|_{H^s}\|\nabla h\|_{H^{s+\alpha-1}}.
\]
Indeed, in the Fourier variables, the corresponding symbol is controlled by
\bq\label{eq:four_bdd_3term}
\lt||\xi|^s\xi-|\eta|^s\eta-|\xi-\eta|^s(\xi-\eta)\rt| \le C\lt(|\eta|^s|\xi-\eta|+|\eta||\xi-\eta|^s\rt).
\eq
Then, by Young's convolution inequality with $\frac1p+\frac1{p'}=\frac32$ and Lemma \ref{lem:low-four-inte},
\begin{align*}
\|\Lambda^s\nabla\cdot((1+h)u) -u\cdot\Lambda^s\nabla h -(1+h)\Lambda^s\nabla\cdot u\|_{L^2} 
&\le C\||\xi|^s\hat u\|_{L_\xi^p} \||\xi|\hat h\|_{L_\xi^{p'}} +C\||\xi|\hat u\|_{L_\xi^{p'}}\||\xi|^s \hat h\|_{L_\xi^p}\\
&\le C\|\Lambda^\alpha u\|_{H^s} \|\nabla h\|_{H^{s+\alpha-1}}.
\end{align*}
Thus,
\[
\frac12\frac\rd\dt\intr (1+h)^{\gamma-3}|\Lambda^s h|^2\,\dx \le  C\|h\|_{H^s}\|\Lambda^\alpha u\|_{H^s} \|\nabla h\|_{H^{s+\alpha-1}} +\intr (1+h)^{\gamma-2}\nabla\Lambda^s h\cdot\Lambda^s u\,\dx .
\]
Multiplying by $\frac{\gamma}{\e^2}$ and using Young's inequality, we get
\bq\label{h-high-est-low}
\begin{aligned}
\frac12\frac\rd\dt\lt( \frac{\gamma}{\e^2}\intr (1+h)^{\gamma-3}|\Lambda^s h|^2\,\dx \rt)
&\le C\lt( \frac{\|h\|_{H^s}}{\e}+\|u\|_{H^s} \rt) \lt( \frac{\|\nabla h\|_{H^{s+\alpha-1}}^2}{\e^2} +\|\Lambda^\alpha u\|_{H^s}^2 \rt)\\
&\quad +\frac{\gamma}{\e^2} \intr (1+h)^{\gamma-2}\nabla\Lambda^s h\cdot\Lambda^s u\,\dx .
\end{aligned}
\eq
We now estimate the high-order velocity energy. Taking the $L^2$ inner product of the second equation in \eqref{h-u-system} with $\Lambda^{2s}u$, we have
\begin{align*}
\frac12\frac\rd\dt\|\Lambda^s u\|_{L^2}^2 +\|\Lambda^{s+\alpha}u\|_{L^2}^2
&= \frac12\intr (\nabla\cdot u)|\Lambda^s u|^2\,\dx  -\intr \Lambda^s u\cdot \lt[\Lambda^s(u\cdot\nabla u)-u\cdot\nabla\Lambda^s u\rt] \dx\\
&\quad -\frac{\gamma}{\e^2} \intr (1+h)^{\gamma-2}\nabla\Lambda^s h\cdot\Lambda^s u\,\dx\\
&\quad -\frac{\gamma}{\e^2}\intr \Lambda^s u\cdot \lt[ \Lambda^s((1+h)^{\gamma-2}\nabla h) -(1+h)^{\gamma-2}\nabla\Lambda^s h \rt] \dx\\
&\quad -\intr \Lambda^{s+\alpha}u\cdot \lt[\Lambda^{s+\alpha}(hu)-u\Lambda^{s+\alpha}h\rt] \dx\\
&\quad +\intr \Lambda^s u\cdot \lt[\Lambda^s(u\Lambda^{2\alpha}h) -\Lambda^\alpha(u\Lambda^{s+\alpha}h)\rt] \dx\\
&=: \sum_{i=1}^6 \sfI_i .
\end{align*}
For $\sfI_1$, by H\"older's inequality and Sobolev embedding,
\[
\sfI_1 \le C\|\nabla u\|_{L^{\frac d\alpha}} \|\Lambda^s u\|_{L^2} \|\Lambda^s u\|_{L^{\frac{1}{\frac12-\frac\alpha d}}} \le C\|u\|_{H^s}\|\Lambda^\alpha u\|_{H^s}^2.
\]
For $\sfI_2$, we use the pointwise Fourier bound
\bq\label{eq:four_bdd_l2}
\lt||\xi|^s\eta_j-|\eta|^s\eta_j\rt| \le C\lt(|\xi-\eta|^s|\eta|+|\xi-\eta||\eta|^s\rt), \quad j=1,\dots,d.
\eq
Thus, by Young's convolution inequality and Lemma \ref{lem:low-four-inte},
\[
\sfI_2 \le C\|\Lambda^s u\|_{L^2} \||\xi|^s\hat u\|_{L_\xi^p} \||\xi|\hat u\|_{L_\xi^{p'}} \le C\|u\|_{H^s}\|\Lambda^\alpha u\|_{H^s}^2.
\]
The pressure coupling term is
\[
\sfI_3 =-\frac{\gamma}{\e^2} \intr (1+h)^{\gamma-2}\nabla\Lambda^s h\cdot\Lambda^s u\,\dx .
\]
For $\sfI_4$, set $g:=(1+h)^{\gamma-2}-1$. Then

\[
\Lambda^s((1+h)^{\gamma-2}\nabla h) -(1+h)^{\gamma-2}\nabla\Lambda^s h = \Lambda^s(g\nabla h)-g\nabla\Lambda^s h.
\]
Using the pointwise bound
\[
\lt||\xi|^s-|\eta|^s\rt| |\eta| \le C\lt(|\xi-\eta|^s|\eta|+|\xi-\eta||\eta|^s\rt),
\]
Young's convolution inequality gives
\[
|\sfI_4| \le \frac{C}{\e^2} \|\Lambda^s g\|_{L^2} \||\xi|^s\hat u\|_{L_\xi^p} \||\xi|\hat h\|_{L_\xi^{p'}} +\frac{C}{\e^2} \|\Lambda^s h\|_{L^2} \||\xi|^s\hat u\|_{L_\xi^p} \||\xi|\hat g\|_{L_\xi^{p'}}.
\]
By Lemma \ref{lem:low-four-inte}, and Remark \ref{rem:low-four-cons}, we obtain
\[
|\sfI_4| \le \frac{C}{\e^2} \|h\|_{H^s} \|\Lambda^\alpha u\|_{H^s} \|\nabla h\|_{H^{s+\alpha-1}}.
\]
Hence, by Young's inequality,
\[
|\sfI_4| \le C\lt( \frac{\|h\|_{H^s}}{\e}+\|u\|_{H^s} \rt) \lt( \frac{\|\nabla h\|_{H^{s+\alpha-1}}^2}{\e^2} +\|\Lambda^\alpha u\|_{H^s}^2 \rt).
\]
For $\sfI_5$, we use the pointwise bound

\bq\label{eq:four_bdd_sa}
\lt||\xi|^{s+\alpha}-|\eta|^{s+\alpha}\rt| \le C\lt( |\xi-\eta|^{s+\alpha} +|\xi-\eta||\eta|^{s+\alpha-1} \rt).
\eq
Thus, by Lemma \ref{lem:low-four-inte}
\begin{align*}
|\sfI_5| &\le \|\Lambda^{s+\alpha}u\|_{L^2} \|\Lambda^{s+\alpha}(hu)-u\Lambda^{s+\alpha}h\|_{L^2}\\
&\le C\|\Lambda^{s+\alpha}u\|_{L^2} \lt( \|\Lambda^{s+\alpha}u\|_{L^2}\|\hat h\|_{L_\xi^1} +\||\xi|\hat u\|_{L_\xi^{p'}} \||\xi|^{s+\alpha-1}\hat h\|_{L_\xi^p} \rt)\\
&\le C\|h\|_{H^s}\|\Lambda^\alpha u\|_{H^s}^2.
\end{align*}
For $\sfI_6$, we use the pointwise estimate

\begin{align*}
\lt||\xi|^s|\eta|^{2\alpha} -|\xi|^\alpha|\eta|^{s+\alpha}\rt|
&= |\xi|^\alpha|\eta|^{2\alpha} \lt||\xi|^{s-\alpha}-|\eta|^{s-\alpha}\rt|\\
&\le C|\xi|^\alpha|\eta|^{2\alpha}|\xi-\eta| \lt( |\xi|^{s-\alpha-1}+|\eta|^{s-\alpha-1} \rt)\\
&\le C|\xi-\eta|^s|\eta|^{2\alpha} +C|\xi-\eta||\eta|^{s+2\alpha-1} +C|\xi|^\alpha|\xi-\eta||\eta|^{s+\alpha-1}.
\end{align*}
Hence, by Lemma \ref{lem:low-four-inte}, we estimate
\begin{align*}
\sfI_6 &\le C\intrr |\xi|^s |\hat u(\xi)| |\xi-\eta|^s |\hat u(\xi-\eta)| |\eta|^{2\alpha} |\hat h(\eta)|\,\rd\eta \dxi\\
&\quad + C\intrr |\xi|^s |\hat u(\xi)| |\xi-\eta| |\hat u(\xi-\eta)|  |\eta|^{s+2\alpha-1} |\hat h(\eta)| \,\rd\eta \dxi\\
&\quad+ C\intrr |\xi|^{s+\alpha} |\hat u(\xi)| |\xi-\eta||\hat u(\xi-\eta)| |\eta|^{s+\alpha-1} |\hat h(\eta)|\,\rd\eta \dxi\\
&\le C\| \Lambda^s u\|_{L^2} \| |\xi|^s \hat u\|_{L_\xi^p} \| |\xi|^{2\alpha} \hat h\|_{L_\xi^{p'}}  + C\| |\xi|^s \hat u\|_{L_\xi^p} \| |\xi|\hat u\|_{L_\xi^{p'}} \|\Lambda^{s+2\alpha-1} h\|_{L^2}\\
&\quad + C\|\Lambda^{s+\alpha} u\|_{L^2}\| |\xi| \hat u\|_{L_\xi^{p'}}\| |\xi|^{s+\alpha-1} \hat h\|_{L_\xi^p}\\
&\le  C\|u\|_{H^s} \|\Lambda^\alpha u\|_{H^s}\|\nabla h\|_{H^{s+\alpha-1}}+C\|h\|_{H^s} \|\Lambda^\alpha u\|_{H^s}^2.
\end{align*}
Collecting the estimates for $\sfI_i$, we obtain

\begin{align*}
\frac12\frac\rd\dt\|\Lambda^s u\|_{L^2}^2 +\|\Lambda^{s+\alpha}u\|_{L^2}^2
&\le C\lt( \frac{\|h\|_{H^s}}{\e}+\|u\|_{H^s} \rt) \lt( \frac{\|\nabla h\|_{H^{s+\alpha-1}}^2}{\e^2} +\|\Lambda^\alpha u\|_{H^s}^2 \rt)\\
&\quad -\frac{\gamma}{\e^2} \intr (1+h)^{\gamma-2} \nabla\Lambda^s h\cdot\Lambda^s u\,\dx .
\end{align*}
We finally combine this with \eqref{h-high-est-low}, the pressure coupling terms cancel and hence
\bq\label{high-ene-l-alp-est}
\begin{aligned}
&\frac12\frac\rd\dt\lt( \frac{\gamma}{\e^2} \intr (1+h)^{\gamma-3}|\Lambda^s h|^2\,\dx +\|\Lambda^s u\|_{L^2}^2 \rt) +\|\Lambda^{s+\alpha}u\|_{L^2}^2 \\
&\quad \le C\lt( \frac{\|h\|_{H^s}}{\e}+\|u\|_{H^s} \rt)  \lt( \frac{\|\nabla h\|_{H^{s+\alpha-1}}^2}{\e^2} +\|\Lambda^\alpha u\|_{H^s}^2 \rt).
\end{aligned}
\eq

%
%
%
%
%
%

\subsection{High-order density dissipation estimate}\label{ssec:low-den-diss}

We now recover the high-order dissipation for $h$ in the case
$0<\alpha\le\frac12$. We consider the mixed quantity
\[
\intr \Lambda^{s+\alpha-1}h\,
\Lambda^{s+\alpha-1}\nabla\cdot u\,\dx
=
\intr \Lambda^{s+2\alpha-1}h\,
\Lambda^{s-1}\nabla\cdot u\,\dx .
\]
Using \eqref{h-u-system}, we compute
\begin{align*}
& -\frac\rd\dt
\intr \Lambda^{s+\alpha-1}h\,
\Lambda^{s+\alpha-1}\nabla\cdot u\,\dx\\
&\quad = \intr \Lambda^{s+\alpha-1}\nabla\cdot((1+h)u)\, \Lambda^{s+\alpha-1}\nabla\cdot u\,\dx  -\intr \Lambda^{s+\alpha-1}h\, \Lambda^{s+\alpha-1}\nabla\cdot\pa_t u\,\dx\\
&\quad = \intr \nabla\cdot\lt( \Lambda^{s+\alpha-1}(hu) -u\Lambda^{s+\alpha-1}h \rt) \Lambda^{s+\alpha-1}\nabla\cdot u\,\dx +\|\Lambda^{s+\alpha-1}\nabla\cdot u\|_{L^2}^2\\
&\quad\quad +\intr \Lambda^{s+\alpha-1}h \lt[ \Lambda^{s+\alpha-1}\nabla\cdot(u\cdot\nabla u) -u\cdot\nabla\Lambda^{s+\alpha-1}\nabla\cdot u \rt] \dx  -\frac{\gamma}{\e^2}\|\Lambda^{s+\alpha}h\|_{L^2}^2\\
&\quad\quad -\frac{\gamma}{\e^2} \intr \nabla\Lambda^{s+\alpha-1}h\cdot \Lambda^{s+\alpha-1}\lt[ \lt((1+h)^{\gamma-2}-1\rt)\nabla h \rt] \dx\\
&\quad\quad +\intr \Lambda^{s+2\alpha-1}h\, \Lambda^{s+2\alpha-1}\nabla\cdot((1+h)u)\,\dx\\
&\quad\quad -\intr \Lambda^{s+\alpha-1}h\, \Lambda^{s+\alpha-1}\nabla\cdot(u\Lambda^{2\alpha}h)\,\dx\\
&\quad =: \sum_{i=1}^7 \sfJ_i .
\end{align*}
For $\sfJ_1$, we write
\begin{align*}
\sfJ_1 &= \intr \nabla\cdot\Lambda^{s+\alpha-1}(hu)\, \Lambda^{s+\alpha-1}\nabla\cdot u\,\dx  -\intr (\nabla\cdot u)\Lambda^{s+\alpha-1}h\, \Lambda^{s+\alpha-1}\nabla\cdot u\,\dx\\
&\quad -\intr u\cdot\nabla\Lambda^{s+\alpha-1}h\, \Lambda^{s+\alpha-1}\nabla\cdot u\,\dx .
\end{align*}
Thus, by the product estimate and Sobolev embedding,
\begin{align*}
|\sfJ_1| &\le C\|\Lambda^{s+\alpha}u\|_{L^2} \lt( \|h\|_{L^\infty}\|\Lambda^{s+\alpha}u\|_{L^2} +\|u\|_{L^\infty}\|\Lambda^{s+\alpha}h\|_{L^2} \rt)  +C\|\nabla u\|_{L^{\frac d\alpha}}
\|\Lambda^{s+\alpha-1}h\|_{L^{\frac{1}{\frac12-\frac\alpha d}}}
\|\Lambda^{s+\alpha}u\|_{L^2}\\
&\le C\lt( \frac{\|h\|_{H^s}}{\e}+\|u\|_{H^s} \rt) \lt( \frac{\|\nabla h\|_{H^{s+\alpha-1}}^2}{\e^2} +\|\Lambda^\alpha u\|_{H^s}^2 \rt).
\end{align*}
The second term is simply estimated by
\[
\sfJ_2 =\|\Lambda^{s+\alpha-1}\nabla\cdot u\|_{L^2}^2\le C\|\Lambda^{s+\alpha} u\|_{L^2}^2
\]
For $\sfJ_3$, we use the pointwise Fourier bound
\[
\lt||\xi|^{s+\alpha-1}\xi_j -|\eta|^{s+\alpha-1}\eta_j\rt| \le C|\xi-\eta|^{s+\alpha} +C|\xi-\eta||\eta|^{s+\alpha-1}, \quad j=1,\dots,d.
\]
Then Young's convolution inequality and Lemma \ref{lem:low-four-inte} give
\begin{align*}
|\sfJ_3| &\le C\intrr   |\xi|^{s+\alpha-1}|\hat h(\xi)| |\xi-\eta|^{s+\alpha}|\hat u(\xi-\eta)| |\eta||\hat u(\eta)|\,\rd\eta\,\rd\xi\\
&\quad +C\intrr    |\xi|^{s+\alpha-1}|\hat h(\xi)| |\xi-\eta||\hat u(\xi-\eta)| |\eta|^{s+\alpha}|\hat u(\eta)|\,\rd\eta\,\rd\xi\\
&\le C\|\Lambda^{s+\alpha}u\|_{L^2} \||\xi|^{s+\alpha-1}\hat h\|_{L_\xi^p} \||\xi|\hat u\|_{L_\xi^{p'}}\\
&\le C\|h\|_{H^s}\|\Lambda^\alpha u\|_{H^s}^2.
\end{align*}
The fourth term gives the desired dissipation:
\[
\sfJ_4=-\frac{\gamma}{\e^2}\|\Lambda^{s+\alpha}h\|_{L^2}^2.
\]
For the pressure commutator term, we set $g=(1+h)^{\gamma-2}-1$. Then, by Lemma \ref{lem:low-four-inte} and Remark \ref{rem:low-four-cons},
\begin{align*}
|\sfJ_5| &\le \frac{C}{\e^2} \|\Lambda^{s+\alpha}h\|_{L^2} \|\Lambda^{s+\alpha-1}(g\nabla h)\|_{L^2}\\
&\le \frac{C}{\e^2} \|\Lambda^{s+\alpha}h\|_{L^2} \lt( \|\hat g\|_{L_\xi^1}\|\Lambda^{s+\alpha}h\|_{L^2} +\||\xi|^{s+\alpha-1}\hat g\|_{L_\xi^p} \||\xi|\hat h\|_{L_\xi^{p'}} \rt)\\
&\le \frac{C}{\e^2}\|h\|_{H^s}\|\nabla h\|_{H^{s+\alpha-1}}^2 .
\end{align*}
For $\sfJ_6$, we use the continuity equation:
\[
\sfJ_6 = \intr \Lambda^{s+2\alpha-1}h\, \Lambda^{s+2\alpha-1}\nabla\cdot((1+h)u)\,\dx = -\intr \Lambda^{s+2\alpha-1}h\, \Lambda^{s+2\alpha-1}\pa_t h\,\dx = -\frac12\frac\rd\dt
\|\Lambda^{s+2\alpha-1}h\|_{L^2}^2 .
\]
Finally, for $\sfJ_7$, since $0<\alpha\le\frac12$, we have $1\le s+3\alpha-1\le s+\alpha$. Thus, by Lemma \ref{lem:low-four-inte},
\begin{align*}
|\sfJ_7| &= \lt|\intr \nabla\Lambda^{s+\alpha-1}h\cdot \Lambda^{s+\alpha-1}(u\Lambda^{2\alpha}h)\,\dx\rt|\\
&\le C\|\Lambda^{s+\alpha}h\|_{L^2} \lt( \||\xi|^{s+\alpha-1}\hat u\|_{L_\xi^p} \||\xi|^{2\alpha}\hat h\|_{L_\xi^{p'}} +\|\hat u\|_{L_\xi^1} \|\Lambda^{s+3\alpha-1}h\|_{L^2} \rt)\\
&\le C\|u\|_{H^s} \|\nabla h\|_{H^{s+\alpha-1}}^2.
\end{align*}
Collecting the estimates for $\sfJ_i$, we obtain
\bq\label{den-di-l-alp}
\begin{aligned}
\frac\rd\dt&\lt( \frac12\|\Lambda^{s+2\alpha-1}h\|_{L^2}^2 -\intr \Lambda^{s+2\alpha-1}h\, \Lambda^{s-1}\nabla\cdot u\,\dx \rt) +\frac{\gamma}{2\e^2}\|\Lambda^{s+\alpha}h\|_{L^2}^2\\
&\le C\lt( \frac{\|h\|_{H^s}}{\e}+\|u\|_{H^s} \rt) \lt( \frac{\|\nabla h\|_{H^{s+\alpha-1}}^2}{\e^2} +\|\Lambda^\alpha u\|_{H^s}^2 \rt) +\|\Lambda^{s+\alpha-1}\nabla\cdot u\|_{L^2}^2 .
\end{aligned}
\eq

%
%
%
%
%
%

\subsection{Completion of the low-order estimate} \label{ssec:low-comp}

We now combine the estimates obtained above. Adding \eqref{zero-e-est-low} and \eqref{high-ene-l-alp-est}, and then adding a small multiple of \eqref{cro-zero-est-low} and
\eqref{den-di-l-alp}, we obtain, for $\theta>0$ sufficiently small,
\begin{align*}
&\frac\rd\dt\bigg[ \frac{\gamma}{2\e^2} \intr (1+h)^{\gamma-3} \lt(|h|^2+|\Lambda^s h|^2\rt)\,\dx +\frac12\lt(\|u\|_{L^2}^2+\|\Lambda^s u\|_{L^2}^2\rt)\\
&\quad\quad\quad +\theta\|\Lambda^{s+2\alpha-1}h\|_{L^2}^2 -2\theta\intr h\nabla\cdot u\,\dx -2\theta\intr \Lambda^{s+2\alpha-1}h\, \Lambda^{s-1}\nabla\cdot u\,\dx \bigg]\\
&\quad\quad\quad +\frac{\theta\gamma}{2\e^2} \lt( \|\nabla h\|_{L^2}^2 +\|\Lambda^{s+\alpha}h\|_{L^2}^2 \rt) +\frac12\lt( \|\Lambda^\alpha u\|_{L^2}^2 +\|\Lambda^{s+\alpha}u\|_{L^2}^2 \rt)\\
&\quad\le C\lt( \frac{\|h\|_{H^s}}{\e}+\|u\|_{H^s} \rt) \lt( \frac{\|\nabla h\|_{H^{s+\alpha-1}}^2}{\e^2} +\|\Lambda^\alpha u\|_{H^s}^2 \rt).
\end{align*}
Indeed, the terms $\|\Lambda^{2\alpha}u\|_{L^2}^2$, $\|\nabla\cdot u\|_{L^2}^2$, and  $\|\Lambda^{s+\alpha-1}\nabla\cdot u\|_{L^2}^2$, which appear in \eqref{cro-zero-est-low} and \eqref{den-di-l-alp} are absorbed by $\|\Lambda^\alpha u\|_{L^2}^2+\|\Lambda^{s+\alpha}u\|_{L^2}^2$ after choosing $\theta>0$ sufficiently small.

Since $\|h\|_{L^\infty}$ is sufficiently small, we have
\[
\intr (1+h)^{\gamma-3} \lt(|h|^2+|\Lambda^s h|^2\rt) \dx \simeq \|h\|_{H^s}^2.
\]
Moreover, by Young's inequality, for any sufficiently small $\delta>0$,
\[
\lt|\intr h\nabla\cdot u\,\dx\rt| \le \delta\frac{\|h\|_{L^2}^2}{\e^2} +C_\delta\|u\|_{H^s}^2,
\]
and
\[
\lt|\intr \Lambda^{s+2\alpha-1}h\, \Lambda^{s-1}\nabla\cdot u\,\dx\rt| \le \delta\frac{\|h\|_{H^s}^2}{\e^2} +C_\delta\|u\|_{H^s}^2,
\]
where we used $s+2\alpha-1\le s$ and $0<\e\le1$. Hence, if $\theta>0$ is chosen sufficiently small, the modified energy
\begin{align*}
\calE_\theta(t) &:=\frac{\gamma}{2\e^2} \intr (1+h)^{\gamma-3} \lt(|h|^2+|\Lambda^s h|^2\rt)\,\dx +\frac12\lt(\|u\|_{L^2}^2+\|\Lambda^s u\|_{L^2}^2\rt)\\
&\quad +\theta\|\Lambda^{s+2\alpha-1}h\|_{L^2}^2 -2\theta\intr h\nabla\cdot u\,\dx -2\theta\intr \Lambda^{s+2\alpha-1}h\, \Lambda^{s-1}\nabla\cdot u\,\dx
\end{align*}
is equivalent to
\[
\frac{\|h\|_{H^s}^2}{\e^2}+\|u\|_{H^s}^2.
\]
Similarly,
\[
\|\nabla h\|_{L^2}^2+\|\Lambda^{s+\alpha}h\|_{L^2}^2 \simeq \|\nabla h\|_{H^{s+\alpha-1}}^2,
\]
up to lower-order terms already controlled by the modified energy. Hence, for some constants $c_0,C_0>0$ independent of $T$ and $\e$, we have
\[
c_0\lt( \frac{\|h\|_{H^s}^2}{\e^2}+\|u\|_{H^s}^2 \rt) \le \calE_\theta(t) \le C_0\lt( \frac{\|h\|_{H^s}^2}{\e^2}+\|u\|_{H^s}^2 \rt),
\]
and
\[
\frac\rd\dt\calE_\theta(t) +c_0\lt( \frac{\|\nabla h\|_{H^{s+\alpha-1}}^2}{\e^2} +\|\Lambda^\alpha u\|_{H^s}^2 \rt) \le C\lt( \frac{\|h\|_{H^s}}{\e}+\|u\|_{H^s} \rt) \lt( \frac{\|\nabla h\|_{H^{s+\alpha-1}}^2}{\e^2} +\|\Lambda^\alpha u\|_{H^s}^2 \rt).
\]
By the smallness assumption \eqref{small-assumption-low}, if $\eta>0$ is sufficiently small, the right-hand side is absorbed by the dissipation term. Consequently,
\[
\frac\rd\dt\calE_\theta(t) +\frac{c_0}{2}\lt( \frac{\|\nabla h\|_{H^{s+\alpha-1}}^2}{\e^2} +\|\Lambda^\alpha u\|_{H^s}^2 \rt) \le0.
\]
Integrating over $[0,t]$, we conclude that
\[
\frac{\|h(t)\|_{H^s}^2}{\e^2} +\|u(t)\|_{H^s}^2 +\int_0^t\lt( \frac{\|\nabla h(\tau)\|_{H^{s+\alpha-1}}^2}{\e^2} +\|\Lambda^\alpha u(\tau)\|_{H^s}^2 \rt)\rd\tau \le C\lt( \frac{\|h_0\|_{H^s}^2}{\e^2} +\|u_0\|_{H^s}^2 \rt),
\]
where $C>0$ is independent of $T$ and $\e$. This proves Proposition \ref{prop:uni-apri-low}.

%
%
%
%
%
%
 \section{Uniform estimates in the higher-order fractional regime}\label{sec:high-alpha}

In this section, we provide the uniform a priori estimate in the higher-order fractional alignment regime
\[
\frac12<\alpha<1.
\]
Here the alignment dissipation has order $2\alpha>1$, so the commutator estimates are closer to the standard high-order Sobolev estimates than in the low-order regime treated in Section \ref{sec:low-alpha}. Throughout this section, we assume
\[
s>\frac d2.
\]
We continue to use the formulation \eqref{h-u-system}. The density dissipation will be recovered from the singular pressure term at the level $\frac1{\e^2}\|\nabla h\|_{H^{s-\alpha}}^2$. 

\begin{proposition}\label{prop:uni-apri-high}
Let $d\ge2$, $\frac12<\alpha<1$, and $\gamma\ge1$. Assume that $s>\frac d2$. Let $(h,u)$ be a smooth solution to \eqref{h-u-system} on $[0,T]$. Suppose that
\bq\label{small-assumption-high}
\sup_{0\le t\le T} \lt( \frac{\|h(t)\|_{H^s}}{\e}+\|u(t)\|_{H^s} \rt) \le \eta
\eq
for some sufficiently small $\eta>0$. Then, for all $t\in[0,T]$,
\[
\frac{\|h(t)\|_{H^s}^2}{\e^2} +\|u(t)\|_{H^s}^2 +\int_0^t\lt( \frac{\|\nabla h(\tau)\|_{H^{s-\alpha}}^2}{\e^2} +\|\Lambda^\alpha u(\tau)\|_{H^s}^2 \rt)\rd\tau  \le C\lt( \frac{\|h_0\|_{H^s}^2}{\e^2} +\|u_0\|_{H^s}^2 \rt),
\]
where $C>0$ is independent of $T$ and $\e$.
\end{proposition}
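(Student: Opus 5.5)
The plan mirrors Section \ref{sec:low-alpha}: build a modified energy equivalent to $\frac{\|h\|_{H^s}^2}{\e^2}+\|u\|_{H^s}^2$ whose dissipation controls
\[
\mathcal D:=\frac{\|\nabla h\|_{H^{s-\alpha}}^2}{\e^2}+\|\Lambda^\alpha u\|_{H^s}^2 .
\]
Every nonlinear term will be bounded by $C\big(\frac{\|h\|_{H^s}}{\e}+\|u\|_{H^s}\big)\mathcal D$ and absorbed using \eqref{small-assumption-high}. Since $2\alpha>1$ and $s>\frac d2$, we have $s+\alpha>\frac d2+1$. Hence $\|\nabla u\|_{L^\infty}+\|\hat u\|_{L^1_\xi}+\||\xi|\hat u\|_{L^1_\xi}\le C\|\Lambda^\alpha u\|_{H^s}+C\|u\|_{H^s}$, with the low-frequency part handled by $\|u\|_{H^s}$ or via interpolation between $L^2$ and $\dot H^{s+\alpha}$. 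This means we can use Kato--Ponce type commutator estimates in physical space with $L^\infty$ factors, and the Fourier-Lebesgue interpolation of Lemma \ref{lem:low-four-inte} should not be needed.

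\textbf{Zeroth-order step.} I would repeat Subsection \ref{ssec:low-zero} verbatim, replacing $\|\nabla h\|_{H^{s+\alpha-1}}$ by $\|\nabla h\|_{H^{s-\alpha}}$. The $L^4$ and Sobolev embeddings used there remain valid because $\frac d2\le s-\alpha+1$ and $s+\alpha>\frac d2$. The pressure coupling cancels between the $h$ and $u$ equations, and the cross term $-\frac{\rd}{\dt}\intr h\nabla\cdot u\,\dx$ produces $\frac{\gamma}{\e^2}\|\nabla h\|_{L^2}^2$. In the resulting remainder, $\|\Lambda^{2\alpha}u\|_{L^2}^2\le C\|\Lambda^\alpha u\|_{H^s}^2$ since $2\alpha\le s+\alpha$. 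The alignment term $I$ is now estimated by moving $\Lambda^{2\alpha-1}$ onto $\nabla h$, which requires $2\alpha\le s-\alpha+1$ for the $h$-factor; I would check this, or else split the derivative as $\Lambda^\alpha$ on each side.

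\textbf{High-order energy step.} I would test the $h$ equation with $\frac{\gamma}{\e^2}(1+h)^{\gamma-3}\Lambda^s h$ and the $u$ equation with $\Lambda^{2s}u$, and cancel the top-order pressure coupling $\intr(1+h)^{\gamma-2}\nabla\Lambda^sh\cdot\Lambda^su\,\dx$ exactly as before. The remaining terms are handled as follows.
\begin{itemize}
\item The transport commutator $[\Lambda^s\nabla\cdot,u]h$ is bounded by $\|\nabla u\|_{L^\infty}\|\Lambda^sh\|_{L^2}+\|\Lambda^{s+1}u\|\,\|h\|_{L^\infty}$. The term $\Lambda^{s+1}u$ is too strong, so I would instead put $\Lambda^{s+1-\alpha}$ on $u$ and $\Lambda^{\alpha}$ on the $\Lambda^s h$ test function. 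This yields $\|\Lambda^{s+\alpha}u\|\,\|\Lambda^{s}h\|$ times $\|h\|_{L^\infty}$-type factors, or $\|\Lambda^\alpha u\|_{H^s}\|\nabla h\|_{H^{s-\alpha}}\|h\|_{H^s}$.
\item The pressure commutator $\frac{1}{\e^2}[\Lambda^s,g]\nabla h$ is paired with $\Lambda^su$ by shifting $\Lambda^\alpha$ onto $u$. This gives $\frac{C}{\e^2}\|\Lambda^{s+\alpha}u\|\,\|h\|_{H^s}\|\nabla h\|_{H^{s-\alpha}}$.
\item The alignment terms $\Lambda^{s+\alpha}(hu)-u\Lambda^{s+\alpha}h$ and $\Lambda^s(u\Lambda^{2\alpha}h)-\Lambda^\alpha(u\Lambda^{s+\alpha}h)$ I would rewrite with the fractional Leibniz estimate \eqref{eq:frac_prod1}. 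The cost is at most $\Lambda^{s+\alpha}h$ paired with $\Lambda^{s+\alpha}u$ (or lower) times $\|u\|_{L^\infty}$.
\end{itemize}

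\textbf{Main obstacle: density dissipation.} I expect the hard part to be the high-order density dissipation. Now only $\|\Lambda^{s-\alpha+1}h\|$ is available as dissipation, while the alignment commutator just described naively produces $\Lambda^{s+\alpha}h$, which exceeds it because $\alpha>\frac12$. I would handle it by integrating the continuity equation in time, as in $\sfJ_6$. Concretely, I would write $u\Lambda^{s+\alpha}h$-type terms using $\Lambda^{2\alpha}h$ and exploit $\pa_th=-\nabla\cdot((1+h)u)$, so that the worst piece becomes an exact time derivative of a quadratic form in $h$ that is bounded by $\frac{\|h\|_{H^s}^2}{\e^2}$ times $\eta$. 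Failing that, I would distribute derivatives so that $h$ carries at most $s$ derivatives and $u$ carries $s+\alpha$, noting that $\|h\|_{H^s}\le\e\eta$ makes such terms acceptable.

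The mixed functional I would use is $-\intr\Lambda^{s-\alpha}h\,\Lambda^{s-\alpha}\nabla\cdot u\,\dx$. Its derivative gives $-\frac{\gamma}{\e^2}\|\Lambda^{s-\alpha+1}h\|_{L^2}^2$. The linear alignment part gives $\intr\Lambda^{s-\alpha}\nabla h\cdot\Lambda^{s+\alpha}u\,\dx$, which is absorbed by Young's inequality into $\frac{\gamma}{4\e^2}\|\Lambda^{s-\alpha+1}h\|^2+C\e^2\|\Lambda^{s+\alpha}u\|^2$, using $\e\le1$. The nonlinear terms are estimated as above.

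\textbf{Conclusion.} Finally I would add a small multiple $\theta$ of the two cross estimates to the energy estimates. The modified energy is equivalent to $\frac{\|h\|_{H^s}^2}{\e^2}+\|u\|_{H^s}^2$, since $s-\alpha\le s$ and $s-\alpha+1\le s+\alpha$. Absorbing the right-hand side by smallness of $\eta$ and integrating in time gives the estimate stated in Proposition \ref{prop:uni-apri-high}.
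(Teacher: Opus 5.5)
Your architecture is the paper's: weighted energies with exact cancellation of the pressure coupling, the cross functionals $-\intr h\nabla\cdot u\,\dx$ and $-\intr\Lambda^{s-\alpha}h\,\Lambda^{s-\alpha}\nabla\cdot u\,\dx$, and a modified energy. However, your treatment of the alignment terms, which is the real difficulty for $\alpha>\frac12$, does not close.

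In the high-order velocity estimate you bound $\Lambda^{s+\alpha}(hu)-u\Lambda^{s+\alpha}h$ and $\Lambda^s(u\Lambda^{2\alpha}h)-\Lambda^\alpha(u\Lambda^{s+\alpha}h)$ with the Leibniz rule \eqref{eq:frac_prod1}. This puts $s+\alpha$ (resp.\ $s+2\alpha$) derivatives on $h$. But $\|\Lambda^{s+\alpha}h\|_{L^2}$ is controlled by nothing, because the density dissipation stops at order $s-\alpha+1<s+\alpha$.

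Your suggested remedy, an exact time derivative via the continuity equation, is not available there. The identity $\intr\Lambda^rh\,\Lambda^r\nabla\cdot((1+h)u)\,\dx=-\frac12\frac{\rd}{\dt}\|\Lambda^rh\|_{L^2}^2$ applies only when $\Lambda^{2\alpha}((1+h)u)$ is paired with a derivative of $h$, as in the cross functionals. In $\sfK_5$ and $\sfK_6$ it is paired with $\Lambda^su$. What is needed is the commutator cancellation. After moving $\Lambda^\alpha$ onto $\Lambda^su$, the symbols $\lt||\xi|^{s+\alpha}-|\eta|^{s+\alpha}\rt|$ and $|\eta|^{2\alpha}\lt||\xi|^{s-\alpha}-|\eta|^{s-\alpha}\rt|$, with $\eta$ the frequency of $h$, leave at most $s+\alpha-1\le s$ derivatives on $h$. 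Your fallback names this target but gives no mechanism for reaching it.

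The density-dissipation step inherits the problem, since there you estimate the nonlinear terms ``as above''. The term $\intr\nabla\Lambda^{s-\alpha}h\cdot\lt[\Lambda^{s+\alpha}(hu)-\Lambda^{s-\alpha}(u\Lambda^{2\alpha}h)\rt]\dx$ closes only because the two top-order pieces $u\Lambda^{s+\alpha}h$ cancel. Alternatively, as the paper does, you can turn $\Lambda^{2\alpha}((1+h)u)$ into $-\frac12\frac{\rd}{\dt}\|\Lambda^sh\|_{L^2}^2$ (this is $\sfL_6$). You then symmetrize the leftover transport term $\intr\Lambda^sh\,u\cdot\nabla\Lambda^sh\,\dx=-\frac12\intr(\nabla\cdot u)|\Lambda^sh|^2\dx$ inside $\sfL_7$.

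The zeroth-order cross term has the same defect. Your two options for $I$ require $3\alpha\le s+1$ or $2\alpha\le s$. More generally, any direct H\"older--Leibniz bound puts $1+2\alpha$ derivatives on the two copies of $h$ in the high--low interaction, while at most $2(s-\alpha+1)$ are available. All of these fail for admissible parameters such as $d=2$, $s=1.1$, $\alpha=0.9$. The paper keeps the linear term together with the nonlinear ones and writes
\[
-\intr\nabla h\cdot\lt(\Lambda^{2\alpha}((1+h)u)-u\Lambda^{2\alpha}h\rt)\dx =-\frac12\frac{\rd}{\dt}\|\Lambda^\alpha h\|_{L^2}^2+\intr\Lambda^\alpha h\,\Lambda^\alpha(\nabla h\cdot u)\,\dx .
\]
It then uses \eqref{eq:frac_prod2} and $\intr\Lambda^\alpha h\,u\cdot\nabla\Lambda^\alpha h\,\dx=-\frac12\intr(\nabla\cdot u)|\Lambda^\alpha h|^2\dx$. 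This way, $2\alpha$ derivatives never sit on one copy of $h$ against $\nabla h$ on the other, which is why $\theta\|\Lambda^\alpha h\|_{L^2}^2$ appears in $\overline\calE_\theta$.

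Finally, your justification of the energy equivalence is incorrect. The product $\|\Lambda^{s-\alpha}h\|_{L^2}\|\Lambda^{s-\alpha+1}u\|_{L^2}$ is not controlled by the energy, because $s-\alpha+1>s$. You must rewrite the cross term as $\intr\Lambda^{s-2\alpha+1}h\,\Lambda^{s-1}\nabla\cdot u\,\dx$ and use $2\alpha>1$.
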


%
%
%
%
%
%
 \subsection{Fourier interpolation estimates}

We provide the Fourier interpolation estimates used in the higher-order fractional regime. They play the same role as Lemma \ref{lem:low-four-inte}, but the relevant density dissipation level is now $\|\nabla h\|_{H^{s-\alpha}}$.

\begin{lemma}\label{lem:high-four-inter}
Let $d\ge2$, $\frac12<\alpha<1$, and $s>\frac d2$. Let
\[
q_1 := \frac{2s}{s+\alpha}, \quad q_1' := \frac{2s}{2s-\alpha}, \quad q_2 := \frac{2s}{s-\alpha+1}, \quad q_2' := \frac{2s}{2s+\alpha-1}, \quad q_3 := \frac{2s}{2s-1}.
\]
Then, for any sufficiently regular $f$, we have
\bq\label{high-four-vel-gen}
\||\xi|^\alpha \hat f\|_{L_\xi^1}  +\||\xi|^s\hat f\|_{L_\xi^{q_1}} +\||\xi|\hat f\|_{L_\xi^{ q_2'}} +\||\xi|^{1+\alpha}\hat f\|_{L_\xi^{q_3}} \le C\|\Lambda^\alpha f\|_{H^s}.
\eq
Moreover,
\bq\label{high-four-den-d-gen}
\| |\xi|^{1-\alpha} \hat f\|_{L_\xi^1} +\| |\xi|^{s-2\alpha+1} \hat f\|_{L_\xi^{q_1}}+   \| |\xi| \hat f\|_{L_\xi^{q_1'}} + \| |\xi|^s \hat f \|_{L_\xi^{q_2}}\le C\|\nabla f\|_{H^{s-\alpha}},
\eq
and
\bq\label{high-four-Hs-gen}
\| \hat f\|_{L_\xi^1} + \||\xi|^{s-\alpha} \hat f\|_{L_\xi^{q_1}} + \||\xi|^\alpha \hat f\|_{L_\xi^{q_1'}}+\||\xi|^{s+\alpha-1}\hat f\|_{L_\xi^{q_2}} \le C\|f\|_{H^s}.
\eq
\end{lemma}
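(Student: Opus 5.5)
We would follow the proof of Lemma \ref{lem:low-four-inte} verbatim in structure. The only tool is the elementary H\"older interpolation used there: for $1<r<2$ and $a,b\ge0$,
\[
\||\xi|^a\hat f\|_{L^r_\xi}\le \||\xi|^b\hat f\|_{L^1_\xi}^{\frac{2-r}{r}}\,\Big\||\xi|^{c}\hat f\Big\|_{L^2_\xi}^{\frac{2(r-1)}{r}},\qquad c=\frac{ar-b(2-r)}{2(r-1)},
\]
together with $L^1_\xi$ bounds for a single power of $|\xi|$. First, we check that all exponents $q_1,q_1',q_2,q_2',q_3$ lie in $(1,2)$. This uses $s>\frac d2\ge1>\alpha>\frac12$; for instance, $q_2'<2$ because $2s+\alpha-1>s$, and $q_3>1$ because $s$ is finite.

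Next, we handle the three $L^1_\xi$ terms $\||\xi|^\alpha\hat f\|_{L^1_\xi}$, $\||\xi|^{1-\alpha}\hat f\|_{L^1_\xi}$ and $\|\hat f\|_{L^1_\xi}$. We split into $|\xi|\le R$ and $|\xi|>R$ and apply Cauchy--Schwarz with weights $|\xi|^{\mp(\frac d2\mp\delta)}$, then optimize in $R$. This gives
\[
\||\xi|^\beta\hat f\|_{L^1_\xi}\le C\|\Lambda^{\frac d2+\beta-\delta}f\|_{L^2}^{1/2}\|\Lambda^{\frac d2+\beta+\delta}f\|_{L^2}^{1/2}.
\]
The norms on the right are then controlled as follows.
\begin{itemize}
\item $\|\Lambda^\alpha f\|_{H^s}$ controls $\|\Lambda^c f\|_{L^2}$ for every $c\in[\alpha,s+\alpha]$.
\item $\|\nabla f\|_{H^{s-\alpha}}$ controls $\|\Lambda^c f\|_{L^2}$ for every $c\in[1,s-\alpha+1]$.
\item $\|f\|_{H^s}$ controls $\|\Lambda^c f\|_{L^2}$ for every $c\in[0,s]$.
\end{itemize}
For $\beta=\alpha$, $\beta=1-\alpha$ and $\beta=0$, we need $\frac d2+\beta\pm\delta$ inside the corresponding interval. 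This holds for small $\delta$ because $s>\frac d2$ and $\frac d2\ge1>\alpha$; for example, $\frac d2+1-\alpha-\delta\ge1$ when $\delta<\frac d2-\alpha$.

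For each $L^r$ term, we choose the $L^1$ weight $b$ from the same family: $b=\alpha$ for \eqref{high-four-vel-gen}, $b=1-\alpha$ for \eqref{high-four-den-d-gen}, and $b=0$ for \eqref{high-four-Hs-gen}. We then compute $c$ and check that it lies in the admissible interval. In \eqref{high-four-vel-gen}, the exponents have been designed so that $c$ hits the top order exactly:
\begin{itemize}
\item for $(a,r)=(s,q_1)$, we have $2-q_1=\frac{2\alpha}{s+\alpha}$ and $q_1-1=\frac{s-\alpha}{s+\alpha}$, so $c=\frac{s^2-\alpha^2}{s-\alpha}=s+\alpha$;
\item for $(1,q_2')$, $c=\frac{2s(1-\alpha)+2\alpha(1-\alpha)}{2(1-\alpha)}=s+\alpha$;
\item for $(1+\alpha,q_3)$, $c=\frac{2s(1+\alpha)-\alpha(2s-2)}{2}=s+\alpha$.
\end{itemize}
The same bookkeeping applies to the terms of \eqref{high-four-den-d-gen}, where $c$ should land in $[1,s-\alpha+1]$, typically at the endpoint $s-\alpha+1$, and to \eqref{high-four-Hs-gen}, where $c$ should land in $[0,s]$. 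Where a computed $c$ falls strictly inside the interval rather than at an endpoint, the bound is only easier.

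The main obstacle is purely arithmetic: for each of the eleven terms we must verify that the resulting $c$ stays within the allowed Sobolev window. This is delicate for the terms $\||\xi|^{s-2\alpha+1}\hat f\|_{L^{q_1}_\xi}$ and $\||\xi|^{s+\alpha-1}\hat f\|_{L^{q_2}_\xi}$, where the lower endpoint constraint ($c\ge1$, respectively $c\ge0$) uses $\alpha>\frac12$. If some pairing $(a,b,r)$ produces $c$ outside the window, we would try first to switch the $L^1$ weight to another admissible $b$, for instance $b=0$ or $b=\alpha$, which is allowed as long as $\frac d2+b\pm\delta$ stays in range. This mirrors the proof of Lemma \ref{lem:low-four-inte}, which used two different $L^1$ weights.
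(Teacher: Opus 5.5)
Your proposal is correct and is essentially the paper's proof. It uses the same H\"older interpolation in $\xi$ and the same $L^1_\xi$ anchors $b=\alpha$, $b=1-\alpha$, $b=0$ for the three estimates, with the same pairings $(s,q_1),(1,q_2'),(1+\alpha,q_3)$; $(s-2\alpha+1,q_1),(1,q_1'),(s,q_2)$; $(s-\alpha,q_1),(\alpha,q_1'),(s+\alpha-1,q_2)$. Finishing the bookkeeping, every pairing lands exactly on the top order ($c=s+\alpha$, $c=s-\alpha+1$, $c=s$ respectively), so the lower-endpoint constraints you flag as delicate are automatic (e.g.\ $s-\alpha+1\ge1$ because $s>\alpha$) and no switch of $b$ is ever needed.
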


\begin{proof}
By the definition of $q_i$, and $q_i'$, we have $q_i, q_i'\in(1,2)$, $\frac1{q_i}+\frac1{q_i'}=\frac32$ and $\frac1{q_1} + \frac1{q_2} + \frac{1}{q_3} = 2$. We shall use the identities
\[
\frac{sq_1 - \alpha (2-q_1)}{q_1 - 1} = 2s+2\alpha, \quad \frac{(s-\alpha)q_1}{q_1 -1} = 2s, \quad \frac{q_1' - (1-\alpha) (2-q_1')}{q_1 ' -1} = 2s+2-2\alpha, \quad \frac{\alpha q_1'}{q_1' -1} =2s,
\]
\[
\frac{(s-2\alpha+1)q_1 - (1-\alpha)(2-q_1)}{q_1-1} = 2s+2-2\alpha, \quad \frac{sq_2 - (1-\alpha)(2-q_2)}{q_2 -1} = 2s +2 -2\alpha,  
\]
and
\[
\frac{(s+\alpha-1)q_2}{q_2 -1} = 2s, \quad \frac{q_2' - \alpha(2-q_2')}{q_2'-1} = 2s+2\alpha, \quad \frac{(1+\alpha)q_3 -\alpha(2-q_3)}{q_3-1} = 2s+2\alpha.
\]
We first prove \eqref{high-four-vel-gen}. Since $s+\alpha>\frac d2+\alpha$, we may choose $\delta>0$ sufficiently small so that
\[
\frac d2+\alpha+\delta<s+\alpha.
\]
Then
\[
\||\xi|^\alpha\hat f\|_{L_\xi^1} \le C \|\Lambda^{\frac d2+\alpha-\delta}f\|_{L^2}^{1/2} \|\Lambda^{\frac d2+\alpha+\delta}f\|_{L^2}^{1/2} \le C\|\Lambda^\alpha f\|_{H^s}.
\]
Next, using the interpolation inequality
\[
\||\xi|^a\hat f\|_{L_\xi^r} \le \||\xi|^b\hat f\|_{L_\xi^1}^{\frac{2-r}{r}} \lt\| |\xi|^{\frac{ar-b(2-r)}{2(r-1)}}\hat f \rt\|_{L_\xi^2}^{\frac{2(r-1)}{r}}, \quad 1<r<2,
\]
with $(a,b,r)=(s,\alpha,q_1)$, we obtain
\[
\||\xi|^s\hat f\|_{L_\xi^{q_1}} \le \||\xi|^\alpha\hat f\|_{L_\xi^1}^{\frac{2- q_1}{q_1}} \|\Lambda^{s+\alpha}f\|_{L^2}^{\frac{2(q_1-1)}{q_1}} \le C\|\Lambda^\alpha f\|_{H^s}.
\]
With $(a,b,r)=(1,\alpha, q_2')$, the $L^2_\xi$ order is $s+\alpha$, and hence
\[
\||\xi|\hat f\|_{L_\xi^{q_2'}} \le C\|\Lambda^\alpha f\|_{H^s}.
\]
Finally, with $(a,b,r)=(1+\alpha,\alpha,q_3)$, the $L^2_\xi$ order is $s+\alpha$. Thus
\[
  \||\xi|^{1+\alpha}\hat f\|_{L_\xi^{q_3}}   \le C\|\Lambda^\alpha f\|_{H^s}.
\]
This proves \eqref{high-four-vel-gen}.

We now prove \eqref{high-four-den-d-gen}. Since $s+1-\alpha>\frac d2+1-\alpha>1$, we may choose $\delta>0$ sufficiently small so that
\[
\frac d2+1-\alpha+\delta<s+1-\alpha, \quad 1 \le \frac d2 +1 -\alpha-\delta.
\]
Then
\[
\||\xi|^{1-\alpha}\hat f\|_{L_\xi^1} \le C \|\Lambda^{\frac d2+1-\alpha-\delta}f\|_{L^2}^{1/2} \|\Lambda^{\frac d2+1-\alpha+\delta}f\|_{L^2}^{1/2} \le C\|\nabla f\|_{H^{s-\alpha}}.
\]
With  $(a,b,r)=(s-2\alpha+1,1-\alpha, q_1)$ and $(a,b,r)=(1,1-\alpha, q_1')$, the $L^2_\xi$ order is $s+1-\alpha$ and this implies
\[
\||\xi|^{s-2\alpha+1} \hat f\|_{L_\xi^{q_1}}\le C\|\nabla f\|_{H^{s-\alpha}},
\]
and
\[
 \||\xi|\hat f\|_{L_\xi^{q_1'}} \le C\|\nabla f\|_{H^{s-\alpha}}.
\]
Moreover, with  $(a,b,r)=(s,1-\alpha, q_2)$  the $L^2_\xi$ order is again $s+1-\alpha$. Hence,
\[
\||\xi|^s \hat f\|_{L_\xi^{q_2}} \le C\|\nabla f\|_{H^{s-\alpha}}.
\]
It remains to prove \eqref{high-four-Hs-gen}. Since $s>\frac d2$, we get
\[
\|\hat f\|_{L_\xi^1}\le C\|f\|_{H^s}.
\]
With $(a,b,r)=(s-\alpha,0, q_1)$, $(\alpha,0, q_1')$, and $(s+\alpha-1,0, q_2)$, we have
\[
\| |\xi|^{s-\alpha} \hat f\|_{L_\xi^{q_1}} \le \|\hat f\|_{L_\xi^1}^{\frac{2-q_1}{q_1}} \| |\xi|^s \hat f\|_{L_\xi^2}^{\frac{2(q_1-1)}{q_1}} \le C\|f\|_{H^s},
\]
\[
\| |\xi|^\alpha \hat f\|_{L_\xi^{q_1'}} \le \| \hat f\|_{L_\xi^1}^{\frac{2-q_1'}{q_1'}} \| |\xi|^s \hat f\|_{L_\xi^2}^{\frac{2(q_1'-1)}{q_1'}} \le C\| f\|_{H^s},
\]
and
\[
\| |\xi|^{s+\alpha-1} \hat f\|_{L_\xi^{q_2}} \le \| \hat f\|_{L_\xi^1}^{\frac{2-q_2}{q_2}} \| |\xi|^s \hat f\|_{L_\xi^2}^{\frac{2(q_2-1)}{q_2}} \le C\| f\|_{H^s},
\]
respectively. This proves \eqref{high-four-Hs-gen} and completes the proof.
\end{proof}

\begin{remark}\label{rem:high-four-cons}
Let $g=(1+h)^{\gamma-2}-1$. Since $\|h\|_{L^\infty}$ is sufficiently small, the Moser estimate \eqref{eq:moser_den} gives $\|g\|_{H^s}\le C\|h\|_{H^s}$. Therefore, \eqref{high-four-Hs-gen} implies
\[
\|\hat g\|_{L_\xi^1} +\||\xi|^{s+\alpha-1}\hat g\|_{L_\xi^{q_2}} \le C\|h\|_{H^s}.
\]
In addition, by \eqref{high-four-den-d-gen} and the Moser estimate,
\[
\||\xi|\hat g\|_{L_\xi^{q_1'}} +\||\xi|^s \hat g\|_{L_\xi^{q_2}} \le C\|\nabla g\|_{H^{s-\alpha}} \le C\|\nabla h\|_{H^{s-\alpha}}.
\]
\end{remark}
We shall also use the following direct consequence of \eqref{high-four-den-d-gen}. Since $\alpha>\frac12$, we have
\[
\||\xi|\hat f\|_{L_\xi^{q_1'}}
\le C\|\nabla f\|_{H^{s-\alpha}}
\le C\|\Lambda^\alpha f\|_{H^s}.
\]

%
%
%
%
%
%

\subsection{Zeroth-order energy estimates}

We derive the zeroth-order estimates in the regime $\frac12<\alpha<1$. Since the algebraic identities are the same as in Section \ref{ssec:low-zero}, we only indicate the estimates at the density dissipation level
\[
\frac1{\e^2}\|\nabla h\|_{H^{s-\alpha}}^2.
\]
From the continuity equation, the same computation leading to \eqref{h-zero-est-low} gives
\bq\label{h-zero-est-high}
\begin{aligned}
\frac12\frac\rd\dt\lt( \frac{\gamma}{\e^2}\intr (1+h)^{\gamma-3}|h|^2\,\dx \rt)
&\le C\lt( \frac{\|h\|_{H^s}}{\e}+\|u\|_{H^s} \rt) \lt( \frac{\|\nabla h\|_{H^{s-\alpha}}^2}{\e^2} +\|\Lambda^\alpha u\|_{H^s}^2 \rt)\\
&\quad +\frac{\gamma}{\e^2} \intr (1+h)^{\gamma-2}\nabla h\cdot u\,\dx .
\end{aligned}
\eq
Indeed, compared with the low-order case, the relevant bounds are now
\[
\|\Lambda^{\frac d2}h\|_{L^2} \le C\|\nabla h\|_{H^{s-\alpha}}, \quad \|\nabla u\|_{L^2} +\|\Lambda^{\frac d2}u\|_{L^2} \le C\|\Lambda^\alpha u\|_{H^s},
\]
which follow from $s>\frac d2$ and $\frac12<\alpha<1$.

Next, taking the $L^2$ inner product of the velocity equation with $u$, we have
\begin{align*}
\frac12\frac\rd\dt\|u\|_{L^2}^2+\|\Lambda^\alpha u\|_{L^2}^2 &= -\intr (u\cdot\nabla u)\cdot u\,\dx -\frac{\gamma}{\e^2} \intr (1+h)^{\gamma-2}\nabla h\cdot u\,\dx\\
&\quad -\intr \Lambda^{2\alpha}(hu)\cdot u\,\dx +\intr |u|^2\Lambda^{2\alpha}h\,\dx .
\end{align*}
The three nonlinear terms are estimated exactly as in Section \ref{ssec:low-zero}. Using H\"older's inequality, the fractional product estimate, and Sobolev embedding, we obtain
\begin{align*}
&\lt|\intr (u\cdot\nabla u)\cdot u\,\dx\rt| +\lt|\intr \Lambda^{2\alpha}(hu)\cdot u\,\dx\rt| +\lt|\intr |u|^2\Lambda^{2\alpha}h\,\dx\rt| \\
&\quad \le C\lt( \frac{\|h\|_{H^s}}{\e}+\|u\|_{H^s} \rt) \lt( \frac{\|\nabla h\|_{H^{s-\alpha}}^2}{\e^2} +\|\Lambda^\alpha u\|_{H^s}^2 \rt).
\end{align*}
Hence, for $0<\e\le1$,
\bq\label{u-zero-est-high}
\begin{aligned}
\frac12\frac\rd\dt\|u\|_{L^2}^2+\|\Lambda^\alpha u\|_{L^2}^2 &\le C\lt( \frac{\|h\|_{H^s}}{\e}+\|u\|_{H^s} \rt) \lt( \frac{\|\nabla h\|_{H^{s-\alpha}}^2}{\e^2} +\|\Lambda^\alpha u\|_{H^s}^2 \rt)\\
&\quad -\frac{\gamma}{\e^2} \intr (1+h)^{\gamma-2}\nabla h\cdot u\,\dx .
\end{aligned}
\eq
Adding \eqref{h-zero-est-high} and \eqref{u-zero-est-high}, the pressure coupling terms cancel and hence
\bq\label{zero-energy-est-high}
\begin{aligned}
&\frac12\frac\rd\dt\lt( \frac{\gamma}{\e^2}\intr (1+h)^{\gamma-3}|h|^2\,\dx +\|u\|_{L^2}^2 \rt) +\|\Lambda^\alpha u\|_{L^2}^2\\
&\quad \le C\lt( \frac{\|h\|_{H^s}}{\e}+\|u\|_{H^s} \rt) \lt( \frac{\|\nabla h\|_{H^{s-\alpha}}^2}{\e^2} +\|\Lambda^\alpha u\|_{H^s}^2 \rt).
\end{aligned}
\eq
It remains to recover the zeroth-order dissipation for $h$. As in Section \ref{ssec:low-zero}, we compute
\begin{align*}
-\frac\rd\dt\intr h\nabla\cdot u\,\dx +\frac{\gamma}{\e^2}\|\nabla h\|_{L^2}^2 &\le C\|\nabla h\|_{L^2}\|u\|_{L^\infty}\|\nabla u\|_{L^2} +C\|h\|_{L^\infty}\|\nabla u\|_{L^2}^2+\|\nabla\cdot u\|_{L^2}^2 \\
&\quad  +\frac{C\|h\|_{L^\infty}}{\e^2}\|\nabla h\|_{L^2}^2 +II,
\end{align*}
where
\[
II:=-\intr \nabla h\cdot \lt(\Lambda^{2\alpha}((1+h)u)-u\Lambda^{2\alpha}h\rt)\dx .
\]
This is the only point where the zeroth-order argument differs from the low-order regime. We first have
\[
II = -\frac12\frac{d}{dt}\|\Lambda^\alpha h\|_{L^2}^2  + \intr \Lambda^\alpha h \Lambda^\alpha (\nabla h \cdot u)\,\dx.
\]
Here, the fractional product estimate \eqref{eq:frac_prod2} implies
\[
\|\Lambda^\alpha (\nabla h \cdot u) - u \cdot \nabla \Lambda^\alpha h - \nabla h \cdot \Lambda^\alpha u\|_{\frac{1}{1-\frac\alpha d}} \le C\|\nabla h\|_{L^2}\|\Lambda^\alpha u\|_{\frac{1}{\frac12 -\frac\alpha d}}.
\]
Since $2\alpha >1$, $s>1$, and $s+\alpha >2\alpha$, we obtain
\[\begin{aligned}
\intr \Lambda^\alpha  h (u \cdot \nabla \Lambda^\alpha h)\,\dx &= -\frac12\intr (\nabla\cdot u) |\Lambda^\alpha h|^2\,\dx \\
&\le C\|\nabla u\|_{\frac{d}{2-2\alpha}}\|\nabla h\|_{L^2}^2\\
&\le C\|h\|_{H^s}\|\Lambda^\alpha u\|_{H^s}\|\nabla h\|_{H^{s-\alpha}}.
\end{aligned}\]
and
\[\begin{aligned}
\intr \Lambda^\alpha h (\nabla h \cdot \Lambda^\alpha u )\,\dx &\le C\|\Lambda^\alpha h\|_{\frac{d}{\alpha}}\|\nabla h\|_{L^2}\|\Lambda^\alpha u\|_{\frac{1}{\frac12-\frac\alpha d}} \le C\|h\|_{H^s}\|\Lambda^\alpha u\|_{H^s}\|\nabla h\|_{H^{s-\alpha}}
\end{aligned}
\]
Hence, 
\[\begin{aligned}
II &= -\frac12\frac{d}{dt}\|\Lambda^\alpha h\|_{L^2}^2 + \intr \Lambda^\alpha h \lt[  \Lambda^\alpha (\nabla h \cdot u) - u \cdot \nabla \Lambda^\alpha h - \nabla h \cdot \Lambda^\alpha u \rt]\,\dx\\
&\quad -\frac12\intr (\nabla \cdot u) |\Lambda^\alpha h|^2\,\dx + \intr \Lambda^\alpha h \lt(\nabla h \cdot \Lambda^\alpha u \rt)\,\dx\\
&\le  -\frac12\frac{d}{dt}\|\Lambda^\alpha h\|_{L^2}^2 + C\|h\|_{H^s}\|\Lambda^\alpha u\|_{H^s}\|\nabla h\|_{H^{s-\alpha}}.
\end{aligned}\]
Consequently, by Young's inequality and the smallness assumption \eqref{small-assumption-high},  we obtain
\bq\label{cross-zero-est-high}
\begin{aligned}
\frac\rd\dt\lt(\frac12\|\Lambda^\alpha h\|_{L^2}^2- \intr h\nabla\cdot u\,\dx\rt) +\frac{\gamma}{2\e^2}\|\nabla h\|_{L^2}^2 &\le C\lt( \frac{\|h\|_{H^s}}{\e}+\|u\|_{H^s} \rt) \lt( \frac{\|\nabla h\|_{H^{s-\alpha}}^2}{\e^2} +\|\Lambda^\alpha u\|_{H^s}^2 \rt)\\
&\quad +C_1\|\nabla u\|_{L^2}^2.
\end{aligned}
\eq
This completes the zeroth-order estimates in the higher-order fractional regime.

%
%
%
%
%
%

\subsection{High-order energy estimates}

We next derive the high-order energy estimates in the regime $\frac12<\alpha<1$. Since the algebraic structure is the same as in Section \ref{ssec:low-h-ene}, we only give the estimates with the density dissipation level $\|\nabla h\|_{H^{s-\alpha}}$.

We first estimate the high-order density energy. From the continuity equation,
the same computation as in Section \ref{ssec:low-h-ene} gives
\begin{align*}
\frac12\frac\rd\dt \intr (1+h)^{\gamma-3}|\Lambda^s h|^2\,\dx &= \frac{4-\gamma}{2}\intr (1+h)^{\gamma-3} (\nabla\cdot u)|\Lambda^s h|^2 \,\dx\\
&\quad -\intr (1+h)^{\gamma-3}\Lambda^s h \lt[ \Lambda^s\nabla\cdot((1+h)u) -u\cdot\Lambda^s\nabla h -(1+h)\Lambda^s\nabla\cdot u \rt] \dx\\
&\quad +(\gamma-2)\intr (1+h)^{\gamma-3} (\nabla h\cdot\Lambda^s u)\Lambda^s h\,\dx +\intr (1+h)^{\gamma-2} \nabla\Lambda^s h\cdot\Lambda^s u\,\dx .
\end{align*}
 Using $s+\alpha > \frac d2 +\alpha$,
\[
\lt|\intr (1+h)^{\gamma-3} (\nabla\cdot u)|\Lambda^s h|^2\,\dx\rt| \le C\|\nabla u\|_{L^{\frac{d}{1-\alpha}}} \|\Lambda^s h\|_{L^2}\|\Lambda^s h\|_{L^{\frac{1}{\frac12-\frac{1-\alpha}{d}}}} \le C\|h\|_{H^s} \|\Lambda^\alpha u\|_{H^s} \|\nabla h\|_{H^{s-\alpha}} .
\]
Similarly, by H\"older's inequality and Sobolev embedding,
\[
\lt|\intr (1+h)^{\gamma-3} (\nabla h\cdot\Lambda^s u)\Lambda^s h\,\dx\rt| \le C\|\nabla h\|_{L^{\frac d\alpha}} \|\Lambda^s u\|_{L^{\frac{1}{\frac12-\frac\alpha d}}} \|\Lambda^s h\|_{L^2} \le C\|h\|_{H^s} \|\Lambda^\alpha u\|_{H^s} \|\nabla h\|_{H^{s-\alpha}} .
\]
For the second term, we recall the formula \eqref{eq:four_bdd_3term}:
\[
\lt||\xi|^s\xi-|\eta|^s\eta-|\xi-\eta|^s(\xi-\eta)\rt| \le C\lt(|\eta|^s|\xi-\eta|+|\eta||\xi-\eta|^s\rt).
\]
By Young's convolution inequality with $\frac{1}{q_i} + \frac{1}{q_i'} = \frac32$ for $i=1,2$ and Lemma \ref{lem:high-four-inter},
\[\begin{aligned}
\|\Lambda^s\nabla\cdot((1+h)u) -u\cdot\Lambda^s\nabla h -(1+h)\Lambda^s\nabla\cdot u\|_{L^2} &\le C\lt(\||\xi|^s \hat u\|_{L_\xi^{q_1}}\| |\xi|\hat h\|_{L_\xi^{q_1'}} +  \||\xi|^s \hat h\|_{L_\xi^{q_2}}\| |\xi|\hat u\|_{L_\xi^{q_2'}}\rt)\\
&\le C\|\Lambda^\alpha u\|_{H^s}\|\nabla h\|_{H^{s-\alpha}}.
\end{aligned}\]
Thus, we have
\[
\frac12\frac\rd\dt \intr (1+h)^{\gamma-3}|\Lambda^s h|^2\,\dx \le C\|h\|_{H^s} \|\Lambda^\alpha u\|_{H^s} \|\nabla h\|_{H^{s-\alpha}} +\intr (1+h)^{\gamma-2} \nabla\Lambda^s h\cdot\Lambda^s u\,\dx .
\]
Multiplying by $\frac{\gamma}{\e^2}$ and using Young's inequality, we obtain
\bq\label{h-high-est-high}
\begin{aligned}
\frac12\frac\rd\dt\lt( \frac{\gamma}{\e^2} \intr (1+h)^{\gamma-3}|\Lambda^s h|^2\,\dx \rt)
&\le C\lt( \frac{\|h\|_{H^s}}{\e}+\|u\|_{H^s} \rt) \lt( \frac{\|\nabla h\|_{H^{s-\alpha}}^2}{\e^2} +\|\Lambda^\alpha u\|_{H^s}^2 \rt)\\
&\quad +\frac{\gamma}{\e^2} \intr (1+h)^{\gamma-2} \nabla\Lambda^s h\cdot\Lambda^s u\,\dx .
\end{aligned}
\eq
We now estimate the high-order velocity energy. Taking the $L^2$ inner product of the velocity equation with $\Lambda^{2s}u$, we get
\begin{align*}
\frac12\frac\rd\dt\|\Lambda^s u\|_{L^2}^2 +\|\Lambda^{s+\alpha}u\|_{L^2}^2 &= \frac12\intr (\nabla\cdot u)|\Lambda^s u|^2\,\dx -\intr \Lambda^s u\cdot \lt[\Lambda^s(u\cdot\nabla u)-u\cdot\nabla\Lambda^s u\rt] \dx\\
&\quad -\frac{\gamma}{\e^2} \intr (1+h)^{\gamma-2}\nabla\Lambda^s h\cdot\Lambda^s u\,\dx\\
&\quad -\frac{\gamma}{\e^2}\intr \Lambda^s u\cdot \lt[ \Lambda^s((1+h)^{\gamma-2}\nabla h) -(1+h)^{\gamma-2}\nabla\Lambda^s h \rt] \dx\\
&\quad -\intr \Lambda^{s+\alpha}u\cdot \lt[\Lambda^{s+\alpha}(hu)-u\Lambda^{s+\alpha}h\rt] \dx\\
&\quad +\intr \Lambda^s u\cdot \lt[ \Lambda^s(u\Lambda^{2\alpha}h) -\Lambda^\alpha(u\Lambda^{s+\alpha}h) \rt] \dx\\
&=: \sum_{i=1}^6 \sfK_i .
\end{align*}
The first two terms are estimated as in Section \ref{ssec:low-h-ene}. Using H\"older's inequality, Young's convolution inequality, and Lemma \ref{lem:high-four-inter}, we have
\[
|\sfK_1|+|\sfK_2| \le C\|u\|_{H^s}\|\Lambda^\alpha u\|_{H^s}^2 .
\]
The third term is the pressure coupling term.  

For $\sfK_4$, set $g=(1+h)^{\gamma-2}-1$. Then,
\[
\Lambda^s((1+h)^{\gamma-2}\nabla h) -(1+h)^{\gamma-2}\nabla\Lambda^s h = \Lambda^s(g\nabla h)-g\nabla\Lambda^s h .
\]
We estimate the resulting trilinear form directly. Using the pointwise bound \eqref{eq:four_bdd_l2}, Young's convolution inequality, Lemma \ref{lem:high-four-inter}, and Remark \ref{rem:high-four-cons}, we obtain
\[
|\sfK_4| \le \frac{C}{\e^2} \|\Lambda^s g\|_{L^2} \||\xi|^s\hat u\|_{L_\xi^{q_1}} \||\xi|\hat h\|_{L_\xi^{q_1'}}  +\frac{C}{\e^2} \|\Lambda^s h\|_{L^2} \||\xi|^s\hat u\|_{L_\xi^{q_1}} \||\xi|\hat g\|_{L_\xi^{q_1'}}.
\]
Since $\|h\|_{L^\infty}$ is sufficiently small, the Moser estimate gives
\[
\|\Lambda^s g\|_{L^2}\le C\|h\|_{H^s}.
\]
Together with Lemma \ref{lem:high-four-inter} and Remark \ref{rem:high-four-cons}, this yields
\[
|\sfK_4| \le \frac{C}{\e^2} \|h\|_{H^s} \|\Lambda^\alpha u\|_{H^s} \|\nabla h\|_{H^{s-\alpha}} \le C\lt( \frac{\|h\|_{H^s}}{\e}+\|u\|_{H^s} \rt) \lt( \frac{\|\nabla h\|_{H^{s-\alpha}}^2}{\e^2} +\|\Lambda^\alpha u\|_{H^s}^2 \rt).
\]
For $\sfK_5$, the pointwise bound \eqref{eq:four_bdd_sa}, Young's convolution inequality, and Lemma \ref{lem:high-four-inter} give
\[
|\sfK_5| \le C\|\Lambda^{s+\alpha}u\|_{L^2} \lt( \|\hat h\|_{L_\xi^1}\|\Lambda^{s+\alpha}u\|_{L^2} +\||\xi|^{s+\alpha-1}\hat h\|_{L_\xi^{q_2}}\||\xi| \hat u\|_{L_\xi^{q_2'}} \rt) \le C\|h\|_{H^s}\|\Lambda^\alpha u\|_{H^s}^2.
\]
It remains to estimate $\sfK_6$. We use the pointwise bound
\begin{align*}
&\lt||\xi|^{s-\alpha}|\eta|^{2\alpha} -|\eta|^{s+\alpha}\rt| \\
&\quad \le C|\eta|^{2\alpha} |\xi-\eta| \lt( |\xi|^{s-\alpha-1} + |\eta|^{s-\alpha-1}\rt)\\
&\quad \le C|\xi|^{-\alpha} (|\xi-\eta|^s |\eta|^{2\alpha} + |\xi-\eta| |\eta|^{s+2\alpha-1}) + C|\xi-\eta||\eta|^{s+\alpha-1}\\
&\quad \le C|\xi-\eta|^s |\eta|^\alpha + C|\xi|^{-\alpha} |\xi-\eta|^{s+\alpha} |\eta|^\alpha + C|\xi|^{-\alpha} |\xi-\eta|^{1+\alpha} |\eta|^{s+\alpha-1} + C|\xi-\eta| |\eta|^{s+\alpha-1}.
\end{align*}
Hence, by Young's convolution inequality, Lemma \ref{lem:high-four-inter}, and Remark \ref{rem:high-four-cons},
\begin{align*}
|\sfK_6|  &\le C \intrr (|\xi|^{s+\alpha} |\hat u(\xi)|)  (|\xi-\eta|^s |\hat u(\xi-\eta)|)  (|\eta|^\alpha |\hat h(\eta)|)\,\rd\eta \dxi \\
&\quad + C\intrr (|\xi|^s |\hat u(\xi)|) (|\xi-\eta|^{s+\alpha} |\hat u(\xi-\eta)|) (|\eta|^\alpha |\hat h(\eta)|)\,\rd\eta \dxi\\
&\quad + C\intrr (|\xi|^s |\hat u(\xi)|) (|\xi-\eta|^{1+\alpha} |\hat u(\xi-\eta)|) (|\eta|^{s+\alpha-1} |\hat h(\eta)|)\,\rd\eta \dxi\\
&\quad +  C \intrr (|\xi|^{s+\alpha} |\hat u(\xi)|)  (|\xi-\eta| |\hat u(\xi-\eta)|)  (|\eta|^{s+\alpha-1} |\hat h(\eta)|)\,\rd\eta \dxi \\
&\le C\|\Lambda^{s+\alpha}u\|_{L^2} \||\xi|^s\hat u\|_{L_\xi^{q_1}} \||\xi|^\alpha\hat h\|_{L_\xi^{q_1'}} +C\||\xi|^s\hat u\|_{L_\xi^{q_1}} \||\xi|^{1+\alpha}\hat u\|_{L_\xi^{q_3}} \||\xi|^{s+\alpha-1}\hat h\|_{L_\xi^{q_2}}\\
&\quad +C\|\Lambda^{s+\alpha}u\|_{L^2} \||\xi|\hat u\|_{L_\xi^{q_2'}} \||\xi|^{s+\alpha-1}\hat h\|_{L_\xi^{q_2}}\\
&\le C\|h\|_{H^s}\|\Lambda^\alpha u\|_{H^s}^2.
\end{align*}
Collecting the estimates for $\sfK_i$, we obtain
\bq\label{u-high-est-high}
\begin{aligned}
\frac12\frac\rd\dt\|\Lambda^s u\|_{L^2}^2 +\|\Lambda^{s+\alpha}u\|_{L^2}^2
&\le C\lt( \frac{\|h\|_{H^s}}{\e}+\|u\|_{H^s} \rt) \lt( \frac{\|\nabla h\|_{H^{s-\alpha}}^2}{\e^2} +\|\Lambda^\alpha u\|_{H^s}^2 \rt)\\
&\quad -\frac{\gamma}{\e^2} \intr (1+h)^{\gamma-2} \nabla\Lambda^s h\cdot\Lambda^s u\,\dx .
\end{aligned}
\eq
Combining \eqref{h-high-est-high} and \eqref{u-high-est-high}, the pressure coupling terms cancel and we obtain
\bq\label{high-e-h-alp-est}
\begin{aligned}
&\frac12\frac\rd\dt\lt( \frac{\gamma}{\e^2} \intr (1+h)^{\gamma-3}|\Lambda^s h|^2\,\dx +\|\Lambda^s u\|_{L^2}^2 \rt) +\|\Lambda^{s+\alpha}u\|_{L^2}^2 \\
&\quad \le C\lt( \frac{\|h\|_{H^s}}{\e}+\|u\|_{H^s} \rt) \lt( \frac{\|\nabla h\|_{H^{s-\alpha}}^2}{\e^2} +\|\Lambda^\alpha u\|_{H^s}^2 \rt).
\end{aligned}
\eq
 
%
%
%
%
%
%

\subsection{High-order density dissipation estimate}

We now recover the high-order density dissipation in the regime $\frac12<\alpha<1$. In parallel with Section \ref{ssec:low-den-diss}, we estimate the mixed quantity:
\begin{align*}
&-\frac\rd\dt \intr \Lambda^{s-\alpha}h\,\Lambda^{s-\alpha}\nabla\cdot u\,\dx \\
&\quad = \intr \nabla\cdot\lt(\Lambda^{s-\alpha}(hu)-u\Lambda^{s-\alpha}h\rt) \Lambda^{s-\alpha}\nabla\cdot u\,\dx   +\|\Lambda^{s-\alpha}\nabla\cdot u\|_{L^2}^2 \\
&\quad \quad +\intr \Lambda^{s-\alpha}h \lt[ \Lambda^{s-\alpha}\nabla\cdot(u\cdot\nabla u) -u\cdot\nabla\Lambda^{s-\alpha}\nabla\cdot u \rt] \dx  -\frac{\gamma}{\e^2}\|\Lambda^{s+1-\alpha} h\|_{L^2}^2 \\
&\quad\quad -\frac{\gamma}{\e^2} \intr \nabla\Lambda^{s-\alpha}h\cdot \Lambda^{s-\alpha} \lt[ \lt((1+h)^{\gamma-2}-1\rt)\nabla h \rt] \dx   +\intr \Lambda^{s-\alpha}h\nabla\cdot \Lambda^{s+\alpha}((1+h)u)\,\dx \\
&\quad\quad -\intr \Lambda^{s-\alpha}h \nabla\cdot \Lambda^{s-\alpha}(u\Lambda^{2\alpha}h) \, \dx \\ 
&\quad =: \sum_{i=1}^7 \sfL_i .
\end{align*}

For $\sfL_1$, by the pointwise bound
\bq\label{eq:four_bdd_trans}
||\xi|^{s-\alpha}\xi_j - |\eta|^{s-\alpha}\eta_j| \le C|\xi-\eta| |\eta|^{s-\alpha} + C|\xi-\eta|^{s-\alpha+1}, \quad j=1,\dots, d,
\eq
and $\alpha<s-\alpha+1 < s+\alpha$, we deduce from Lemma \ref{lem:high-four-inter} that
\[\begin{aligned}
\sfL_1 & = \intr  \lt(\Lambda^{s-\alpha}\nabla\cdot(hu)-u \cdot \nabla\Lambda^{s-\alpha}h\rt) \Lambda^{s-\alpha}\nabla\cdot u\,\dx  -\intr (\nabla\cdot u)(\Lambda^{s-\alpha}h)  (\Lambda^{s-\alpha}\nabla\cdot u)\,\dx \\
&\le C\|\Lambda^{s-\alpha+1} u\|_{L^2}\lt(\||\xi|^{s-\alpha} \hat h\|_{L_\xi^{q_1}}\||\xi|\hat u\|_{L_\xi^{q_1'}} + \|\hat h\|_{L_\xi^1} \|\Lambda^{s-\alpha+1} u\|_{L^2}\rt)  +C\|\nabla \cdot u\|_{L^{\frac{d}{\alpha}}}\|\Lambda^s h\|_{L^2}\|\Lambda^{s-\alpha+1}u\|_{L^2}\\
&\le C\|h\|_{H^s}\|\Lambda^\alpha u\|_{H^s}^2.
\end{aligned}\]
The second term is estimated by
\[
\sfL_2 = \|\Lambda^{s-\alpha}\nabla\cdot u\|_{L^2}^2 \le \|\Lambda^{s-\alpha+1} u\|_{L^2}^2.
\]
For $\sfL_3$, the pointwise bound \eqref{eq:four_bdd_trans} and Lemma \ref{lem:high-four-inter} give
\begin{align*}
|\sfL_3| &\le C\||\xi|^{s-\alpha} \hat h\|_{L_\xi^{q_1}} \| |\xi| \hat u\|_{L_\xi^{q_1'}} \|\Lambda^{s-\alpha+1} u\|_{L^2}  \le C\|h\|_{H^s} \|\Lambda^\alpha u\|_{H^s}^2.
\end{align*}
The fourth term gives the desired density dissipation.

For the pressure commutator term, set $g=(1+h)^{\gamma-2}-1$. Using the bound
\[
|\xi|^{s-\alpha} \le C\lt(|\xi-\eta|^{s-\alpha}+|\eta|^{s-\alpha}\rt),
\]
Young's convolution inequality and Lemma \ref{lem:high-four-inter} give
\[\begin{aligned}
|\sfL_5| &\le \frac{C}{\e^2}\|\Lambda^{s-\alpha+1} h\|_{L^2} \|\Lambda^{s-\alpha}(g\nabla h)\|_{L^2}  \\
&\le \frac{C}{\e^2}\|\Lambda^{s-\alpha+1} h\|_{L^2} \lt( \||\xi|^{s-\alpha}\hat g\|_{L_\xi^{q_1}} \||\xi|\hat h\|_{L_\xi^{q_1'}} +\|\hat g\|_{L_\xi^1}\|\Lambda^{s-\alpha+1} h\|_{L^2} \rt)\\
&\le \frac{C}{\e^2} \|h\|_{H^s}\|\nabla h\|_{H^{s-\alpha}}^2.
\end{aligned}\]
For $\sfL_6$, we find
\[
\sfL_6 = -\frac12\frac{d}{dt}\|\Lambda^s h\|_{L^2}^2.
\]
It remains to estimate $\sfL_7$. We decompose
\begin{align*}
\sfL_7 &= \intr \nabla\Lambda^{s-\alpha} h \cdot\lt[ \Lambda^{s-\alpha} (u\Lambda^{2\alpha} h) - u \Lambda^{s+\alpha} h\rt]\dx\\
&\quad +\intr \Lambda^s h \lt[ \Lambda^\alpha( u \cdot \nabla\Lambda^{s-\alpha} h) - u \cdot \nabla\Lambda^s h - \Lambda^\alpha u \cdot \nabla \Lambda^{s-\alpha} h \rt]\dx\\
&\quad + \intr \Lambda^s h (u \cdot \nabla \Lambda^s h)\,\dx + \intr \Lambda^s h (\Lambda^\alpha u) \cdot \nabla\Lambda^{s-\alpha} h\,\dx\\
&=: \sfL_{7,1}+\sfL_{7,2}+\sfL_{7,3} + \sfL_{7,4}.
\end{align*}
For $\sfL_{7,1}$, we use the pointwise bound
\begin{align*}
 \lt| |\xi|^{s-\alpha}|\eta|^{2\alpha} -|\eta|^{s+\alpha} \rt| 
&\le C|\eta|^{2\alpha}|\xi-\eta| (|\xi|^{s-\alpha-1} + |\eta|^{s-\alpha-1})\\
&\le C|\xi|^{-\alpha} |\xi-\eta|^s |\eta|^{2\alpha} + C|\xi|^{-\alpha} |\xi-\eta||\eta|^{s+2\alpha-1} + C|\xi-\eta| |\eta|^{s+\alpha -1}\\
& \le C|\xi-\eta|^s|\eta|^\alpha + C|\xi|^{-\alpha}|\xi-\eta|^{s+\alpha}|\eta|^\alpha + C|\xi-\eta||\eta|^{s+\alpha-1} \\
&\quad  + C|\xi|^{-\alpha}|\xi-\eta|^{1+\alpha}  |\eta|^{s+\alpha-1}.
\end{align*}
Then, Young's convolution inequality and Lemma \ref{lem:high-four-inter} yield
\begin{align*}
|\sfL_{7,1}|
&\le C \intrr (|\xi|^{s-\alpha +1} |\hat h(\xi)|) (|\xi-\eta|^s |\hat u(\xi-\eta)|) (|\eta|^\alpha |\hat h(\eta)|)\,\rd\eta \dxi\\
&\quad + C \intrr (|\xi|^{s-2\alpha +1} |\hat h(\xi)|) (|\xi-\eta|^{s+\alpha} |\hat u(\xi-\eta)|) (|\eta|^\alpha |\hat h(\eta)|)\,\rd\eta \dxi\\
&\quad +C \intrr (|\xi|^{s-\alpha +1} |\hat h(\xi)|) (|\xi-\eta| |\hat u(\xi-\eta)|) (|\eta|^{s+\alpha-1} |\hat h(\eta)|)\,\rd\eta \dxi\\
&\quad + C \intrr (|\xi|^{s-2\alpha+1} |\hat h(\xi)|) (|\xi-\eta|^{1+\alpha} |\hat u(\xi-\eta)|) (|\eta|^{s+\alpha-1} |\hat h(\eta)|)\,\rd\eta \dxi\\
&\le C\|\Lambda^{s-\alpha+1}h\|_{L^2} \||\xi|^s\hat u\|_{L_\xi^{q_1}} \||\xi|^\alpha\hat h\|_{L_\xi^{q_1'}} + C\||\xi|^{s-2\alpha+1}\hat h\|_{L_\xi^{q_1}} \|\Lambda^{s+\alpha}u\|_{L^2} \||\xi|^\alpha\hat h\|_{L_\xi^{q_1'}} \\
&\quad +  C\|\Lambda^{s-\alpha+1}h\|_{L^2}    \||\xi|\hat u\|_{L_\xi^{q_2'}}\||\xi|^{s+\alpha-1} \hat h\|_{L_\xi^{q_2}} +C\||\xi|^{s-2\alpha+1} \hat h\|_{L_\xi^{q_1}} \| |\xi|^{1+\alpha} \hat u\|_{L_\xi^{q_3}} \| |\xi|^{s+\alpha-1} \hat h\|_{L_\xi^{q_2}}\\
&\le C\|h\|_{H^s} \|\nabla h\|_{H^{s-\alpha}} \|\Lambda^\alpha u\|_{H^s}.
\end{align*}
For $\sfL_{7,2}$, by the fractional commutator estimates,
\[
\sfL_{7,2} \le C\|\Lambda^s h\|_{L^2}\|\Lambda^\alpha u\|_{L^\infty} \|\Lambda^{s-\alpha+1} h\|_{L^2} \le C\|h\|_{H^s}\|\nabla h\|_{H^{s-\alpha}} \|\Lambda^\alpha u\|_{H^s},
\]
where we used $s>\frac d2$.

For $\sfL_{7,3}$, Sobolev embedding gives
\[
\sfL_{7,3} = -\frac12 \intr (\nabla\cdot u) |\Lambda^s h|^2\,\dx \le C\|\nabla u\|_{L^{\frac{d}{1-\alpha}}} \|\Lambda^{s-\alpha+1} h\|_{L^2}\|\Lambda^s h\|_{L^2} \le C\|h\|_{H^s}\|\nabla h\|_{H^{s-\alpha}} \|\Lambda^\alpha u\|_{H^s},
\]
where we used $H^{\frac d2 -1+\alpha}(\R^d) \hookrightarrow L^{\frac{d}{1-\alpha}}(\R^d)$ and $s+\alpha >\frac d2 +\alpha$.

For $\sfL_{7,4}$, by using $s>\frac d2$,
\[
\sfL_{7,4} \le C\|\Lambda^s h\|_{L^2}\|\Lambda^\alpha u\|_{L^\infty} \|\Lambda^{s-\alpha+1} h\|_{L^2} \le C\|h\|_{H^s}\|\nabla h\|_{H^{s-\alpha}} \|\Lambda^\alpha u\|_{H^s}.
\]
Thus, we have
\[
|\sfL_7| \le C\|h\|_{H^s} \|\nabla h\|_{H^{s-\alpha}} \|\Lambda^\alpha u\|_{H^s} \le C\lt( \frac{\|h\|_{H^s}}{\e} +\|u\|_{H^s} \rt) \lt( \frac{\|\nabla h\|_{H^{s-\alpha}}^2}{\e^2} +\|\Lambda^\alpha u\|_{H^s}^2 \rt).
\]
Collecting the estimates for $\sfL_i$, we obtain
\bq\label{density-diss-high-alpha}
\begin{aligned}
&\frac\rd\dt \lt( \frac12\|\Lambda^s h\|_{L^2}^2 -\intr \Lambda^{s-\alpha}h\,\Lambda^{s-\alpha}\nabla\cdot u\,\dx\rt) + \frac{\gamma}{2\e^2}\|\Lambda^{s+1-\alpha} h\|_{L^2}^2 \\
&\quad \le C\lt( \frac{\|h\|_{H^s}}{\e} +\|u\|_{H^s} \rt) \lt( \frac{  \|\nabla h\|_{H^{s-\alpha}}^2  }{\e^2} +\|\Lambda^\alpha u\|_{H^s}^2 \rt)   + C_1\lt( \|\Lambda^s u\|_{L^2}^2 +\|\Lambda^{s+\alpha}u\|_{L^2}^2 \rt).
\end{aligned}
\eq

%
%
%
%
%
%

\subsection{Completion of the higher-order estimate}

We now conclude the proof of Proposition \ref{prop:uni-apri-high}. Adding \eqref{zero-energy-est-high} and \eqref{high-e-h-alp-est}, and then adding a sufficiently small multiple of \eqref{cross-zero-est-high} and \eqref{density-diss-high-alpha}, we obtain, for $\theta>0$ sufficiently small,
\begin{align*}
&\frac\rd\dt\overline\calE_\theta(t) +\frac{\theta\gamma}{2\e^2} \lt( \|\nabla h\|_{L^2}^2+\|\Lambda^{s-\alpha+1} h\|_{L^2}^2 \rt) +\frac12\lt( \|\Lambda^\alpha u\|_{L^2}^2+\|\Lambda^{s+\alpha}u\|_{L^2}^2 \rt)\\
&\quad \le C\lt( \frac{\|h\|_{H^s}}{\e}+\|u\|_{H^s} \rt) \lt( \frac{\|\nabla h\|_{H^{s-\alpha}}^2}{\e^2} +\|\Lambda^\alpha u\|_{H^s}^2 \rt),
\end{align*}
 where $\overline\calE_\theta$ is the modified energy defined by
\begin{align*}
\overline\calE_\theta(t) &:=\frac{\gamma}{2\e^2} \intr (1+h)^{\gamma-3} \lt(|h|^2+|\Lambda^s h|^2\rt)\,\dx +\frac12\lt(\|u\|_{L^2}^2+\|\Lambda^s u\|_{L^2}^2\rt)\\
&\quad  +\theta \lt(\|\Lambda^\alpha h\|_{L^2}^2 + \|\Lambda^s h\|_{L^2}^2 \rt)-2\theta\intr h\nabla\cdot u\,\dx   - 2\theta \intr \Lambda^{s-\alpha}h\,\Lambda^{s-\alpha}\nabla\cdot u\,\dx.
\end{align*}
By self-adjointness of $\Lambda$, we get
\[
\intr \Lambda^{s-\alpha}h\,\Lambda^{s-\alpha}\nabla\cdot u\,\dx = \intr \Lambda^{s-2\alpha+1}h\,\Lambda^{s-1}\nabla\cdot u\,\dx.
\]
Since $2\alpha>1$, we have $s-2\alpha+1\le s$, and thus
\[
\lt| \intr \Lambda^{s-\alpha}h\,\Lambda^{s-\alpha}\nabla\cdot u\,dx \rt| \le C\|h\|_{H^s}\|u\|_{H^s}.
\]
Exactly as in Section \ref{ssec:low-comp}, the smallness of
$\|h\|_{L^\infty}$ and Young's inequality imply that, if $\theta>0$ is chosen
sufficiently small, then
\[
\overline\calE_\theta(t) \simeq \frac{\|h\|_{H^s}^2}{\e^2}+\|u\|_{H^s}^2.
\]
Moreover,
\[
\|\nabla h\|_{L^2}^2+\|\Lambda^{s-\alpha+1} h\|_{L^2}^2 \simeq \|\nabla h\|_{H^{s-\alpha}}^2
\]
up to lower-order terms controlled by the modified energy. Hence, for some constants $c_0,C_0>0$ independent of $T$ and $\e$,
\[
c_0\lt( \frac{\|h\|_{H^s}^2}{\e^2}+\|u\|_{H^s}^2 \rt) \le \overline\calE_\theta(t) \le C_0\lt( \frac{\|h\|_{H^s}^2}{\e^2}+\|u\|_{H^s}^2 \rt),
\]
and
\[
\frac\rd\dt\overline\calE_\theta(t) +c_0\lt( \frac{\|\nabla h\|_{H^{s-\alpha}}^2}{\e^2} +\|\Lambda^\alpha u\|_{H^s}^2 \rt) \le C\lt( \frac{\|h\|_{H^s}}{\e}+\|u\|_{H^s} \rt) \lt( \frac{\|\nabla h\|_{H^{s-\alpha}}^2}{\e^2} +\|\Lambda^\alpha u\|_{H^s}^2 \rt).
\]
By the smallness assumption \eqref{small-assumption-high}, if $\eta>0$ is sufficiently small, the right-hand side is absorbed by the dissipation term. Thus
\[
\frac\rd\dt\overline\calE_\theta(t) +\frac{c_0}{2}\lt( \frac{\|\nabla h\|_{H^{s-\alpha}}^2}{\e^2} +\|\Lambda^\alpha u\|_{H^s}^2 \rt) \le0.
\]
Integrating over $[0,t]$, we obtain
\[
\frac{\|h(t)\|_{H^s}^2}{\e^2} +\|u(t)\|_{H^s}^2 +\int_0^t\lt( \frac{\|\nabla h(\tau)\|_{H^{s-\alpha}}^2}{\e^2} +\|\Lambda^\alpha u(\tau)\|_{H^s}^2 \rt)\rd\tau \le C\lt( \frac{\|h_0\|_{H^s}^2}{\e^2} +\|u_0\|_{H^s}^2 \rt),
\]
where $C>0$ is independent of $T$ and $\e$. This proves Proposition
\ref{prop:uni-apri-high}.

%
%
%
%
%
%

\section{Global well-posedness}\label{sec:global}

In this section, we prove the global well-posedness result. We first establish local well-posedness and a continuation criterion for \eqref{h-u-system}. We then combine the local theory with the uniform a priori estimates obtained in Sections \ref{sec:low-alpha} and \ref{sec:high-alpha} to extend the solution globally in time.

%
%
%
%
%
%

\subsection{Local well-posedness and continuation}

We first establish the local theory needed to initiate the global bootstrap. The low-order regime $0<\alpha\le\frac12$ requires a separate construction since the index in \eqref{eq:s-ass} may lie below the classical hyperbolic threshold, while the higher-order regime follows from the critical Besov theory of \cite{BMTX24}.

\begin{theorem} \label{thm:loc}
Let $d\ge2$, $0<\alpha<1$, $\gamma\ge1$, $0<\e\le1$, and let $s$ satisfy \eqref{eq:s-ass}. There exists $\eta_\star>0$, independent
of $\e$, such that, if
\[
h_0,u_0\in H^s(\R^d), \quad \frac{\|h_0\|_{H^s}}{\e}+\|u_0\|_{H^s} \le\eta_\star,
\]
then there exists $T_\e>0$ such that \eqref{h-u-system} admits a unique strong solution satisfying
\begin{align*}
&(h,u)\in C\lt([0,T_\e];H^s(\R^d)\times H^s(\R^d)\rt),\\
&\nabla h\in L^2\lt(0,T_\e;H^{\sigma_\alpha}(\R^d)\rt),
\quad
\Lambda^\alpha u\in L^2\lt(0,T_\e;H^s(\R^d)\rt).
\end{align*}
Let $[0,T_*)$ be its maximal interval in this class. If $T_*<\infty$, then at least one of the bounds
\[
\frac12\le1+h(t,x)\le\frac32, \quad 0\le t<T_*, \quad x\in\R^d,
\quad
\text{or}
\quad 
\sup_{0\le t<T_*} \lt( \frac{\|h(t)\|_{H^s}}{\e}+\|u(t)\|_{H^s} \rt) <\eta_\star
\]
must fail as $t\uparrow T_*$.
\end{theorem}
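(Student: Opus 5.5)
For the higher-order regime $\frac12<\alpha<1$ I would not redo the construction. I would invoke the critical Besov theory of \cite{BMTX24}: since $H^s\hookrightarrow \dot B^{d/2}_{2,1}\cap\dot B^{d/2+1-2\alpha}_{2,1}$ for $s>\frac d2$, that theory gives a local solution. Propagating the $H^s$ regularity with the energy identities of Section \ref{sec:high-alpha} then places it in the stated class, and $\e$ enters only through the time scale. The substantial case is therefore $0<\alpha\le\frac12$, where $s$ may lie below $\frac d2+1$ and the classical symmetric hyperbolic theory does not apply directly.

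In the low-order regime I would build the solution by regularization plus compactness. Mollify the data, or add an artificial viscosity $\nu\Delta$ to both equations (alternatively use Friedrichs truncation $J_n$), so that smooth approximate solutions $(h^n,u^n)$ exist on short time intervals. The key point is that the a priori computation of Section \ref{sec:low-alpha} is an energy inequality for $\calE_\theta$ valid for smooth solutions. Its right-hand side is cubic, of the form $C(\frac{\|h\|_{H^s}}{\e}+\|u\|_{H^s})\,D(t)$, with $D$ the dissipation. Proposition \ref{prop:uni-apri-low} with $\eta\le\eta_\star$ then makes these bounds uniform in $n$, provided the regularization preserves the cancellations used there: the pressure coupling and the symmetrization through the weight $(1+h)^{\gamma-3}$. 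Friedrichs truncation commutes with $\Lambda^s$ and is self-adjoint, so the pressure coupling still cancels. The commutators of $J_n$ with multiplication are handled by the same Fourier bounds as in Lemma \ref{lem:low-four-inte}. A continuity argument in $n$ then gives a uniform time $T_\e$ on which $\frac{\|h^n\|_{H^s}}{\e}+\|u^n\|_{H^s}\le2\eta_\star$, together with the uniform dissipation bounds.

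Next comes passage to the limit and uniqueness. I would estimate the difference of two approximate solutions in the low norm $L^2\times L^2$, weighted by $\frac{\gamma}{\e^2}(1+h)^{\gamma-3}$ for the density. The alignment terms $\Lambda^{2\alpha}(hu)-u\Lambda^{2\alpha}h$ are handled via duality and \eqref{eq:frac_prod1}, absorbing into $\|\Lambda^\alpha\delta u\|_{L^2}^2$ using $\|\Lambda^\alpha h\|_{L^{d/\alpha}}\lesssim\|h\|_{H^s}$ as in \eqref{low-phy-emb}. This Cauchy estimate gives convergence in $C([0,T_\e];L^2)$. Interpolating with the uniform $H^s$ bound gives convergence in $H^{s'}$ for every $s'<s$, which suffices to pass to the limit in all nonlinear terms. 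The same difference estimate gives uniqueness.

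Time continuity in $H^s$ is a separate step. I would obtain it by the standard Bona--Smith argument, or by showing that the energy $\calE_\theta(t)$ is continuous (from the energy identity) and combining this with weak continuity. The continuation criterion follows from the fact that $T_\e$ depends only on the size of the data below $\eta_\star$ and on the lower bound of $1+h$.

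The main obstacle is the $L^2$ difference estimate at this low regularity. The transport term $\delta u\cdot\nabla h$ together with the pressure term $\e^{-2}(\,(1+h_1)^{\gamma-2}-(1+h_2)^{\gamma-2})\nabla h$ requires $\nabla h\in L^\infty$, and $s>\frac d2+1-2\alpha$ does not provide this. I would first try to use the time-integrated density dissipation instead: $\nabla h\in L^2_tH^{s+\alpha-1}$ embeds into $L^2_tL^{d/\alpha}$. Pairing this with $\delta u\in L^{2d/(d-2\alpha)}$, which is controlled by $\|\Lambda^\alpha\delta u\|_{L^2}$, closes the estimate through Gronwall with an $L^1_t$ coefficient. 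If the density-difference term resists this, I would fall back to estimating the difference in $H^{-\alpha}$ for $h$.
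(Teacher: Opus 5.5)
\textbf{The difference estimate does not close in $L^2\times L^2$.} Your overall architecture matches the paper: regularization with uniform bounds from Proposition \ref{prop:uni-apri-low}, energy continuity for strong $H^s$ continuity, \cite{BMTX24} for $\frac12<\alpha<1$, and the restart argument for continuation. The gap is in the $L^2\times L^2$ difference estimate, on which you rest both the limit step and uniqueness. Its real obstacle is not the one you identify.

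Write $H=h_1-h_2$, $U=u_1-u_2$. The difference density equation contains $\nabla\cdot(Hu_2)$. Testing against $\frac{\gamma}{\e^2}(1+h_1)^{\gamma-3}H$ produces $\frac1{\e^2}\int|H|^2\nabla\cdot u_2\,\dx$, plus analogous terms from the gradient and time derivative of the weight. To close this in $L^2$ you need $\nabla u_2\in L^1(0,T;L^\infty)$. The available bounds $u_2\in L^\infty_tH^s\cap L^2_tH^{s+\alpha}$ give this only when $s+\alpha>\frac d2+1$, which fails exactly in the new range $\frac d2+1-2\alpha<s\le\frac d2+1-\alpha$. What you do have is $\|\nabla u_2\|_{L^{d/(2\alpha)}}\lesssim\|u_2\|_{H^s}$. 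That gives the bound $\frac{C\eta}{\e^2}\|H\|_{L^{2d/(d-2\alpha)}}^2\lesssim\frac{C\eta}{\e^2}\|\Lambda^\alpha H\|_{L^2}^2$, but your energy has no dissipation for $H$ at all.

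The same problem appears in $\lal U,\calC(H,u_2)\ral$, where $\calC(f,v)=-\Lambda^{2\alpha}(fv)+v\Lambda^{2\alpha}f$. Duality puts $\Lambda^\alpha$ on the difference $H$. The bound $\|\Lambda^\alpha h\|_{L^{d/\alpha}}\lesssim\|h\|_{H^s}$ you quote applies to a solution, not to $H$, which you control only in $L^2$.

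Your pairing of $\nabla h_i\in L^2_tL^{d/\alpha}$ against $U\in L^{2d/(d-2\alpha)}$ is correct. However, it only handles terms such as $U\cdot\nabla h_1$ and $(A_1-A_2)\nabla h_2$, with $A_i=(1+h_i)^{\gamma-2}$. The $H^{-\alpha}$ fallback goes the wrong way. Transporting any norm of $H$ by the non-Lipschitz $u_2$ gives the same obstruction, for example $\int|\Lambda^{-\alpha}H|^2\nabla\cdot u_2\,\dx$. So $H$ must gain $\alpha$ derivatives, not lose them.

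\textbf{The missing idea.} This is the content of Lemma \ref{lem:stab}: run the pressure-induced hypocoercivity, which you already use for the a priori bound, at the level of differences.
\begin{itemize}
\item The paper uses the relative energy $\calE_0=\frac12\int\rho_1|U|^2\,\dx+\e^{-2}\int\Pi(\rho_1|\rho_2)\,\dx$, which removes the weight terms.
\item It adds the cross term $\theta\lal\calK_\alpha H,U\ral$ with $\calK_\alpha=\lal\Lambda\ral^{2\alpha-2}\nabla$. This is bounded by $\|H\|_{L^2}\|U\|_{L^2}$, so the modified energy stays equivalent to $\calE_0$.
\item Its time derivative produces $-\frac{c}{\e^2}\|\Lambda\lal\Lambda\ral^{\alpha-1}H\|_{L^2}^2$ from the pressure. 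The cost is $\|\lal\Lambda\ral^{\alpha-1}\nabla\cdot U\|_{L^2}^2\lesssim\|\Lambda^\alpha U\|_{L^2}^2$, which the alignment dissipation absorbs.
\item With this $\e^{-2}$-weighted $H^\alpha$ dissipation, both bad terms become $C\eta(\cdots)$ and are absorbed.
\item The only non-small term is $\e^{-2}\lal\calK_\alpha H,(A_1-A_2)\nabla h_2\ral$. It gives the Gr\"onwall factor $1+\e^{-2}\|\nabla h_2\|_{H^{s+\alpha-1}}^2\in L^1_t$.
\end{itemize}

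\textbf{Existence does not need a Cauchy estimate.} The paper passes to the limit in the viscous approximations by Aubin--Lions, using $\pa_t(h_\delta,u_\delta)\in L^2(0,T;H^{s-1})$ and weak convergence for the nonlocal terms. The difference estimate is therefore needed only for uniqueness. Your Cauchy route would also work once the difference estimate is repaired, after handling the extra terms from mismatched regularizations.
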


The time $T_\e$ may depend on $\e$. Only the smallness threshold and
the a priori constants used below are uniform in $\e$.

\subsubsection{Stability in the low-order regime}

We begin with the estimate that yields uniqueness and identifies the limit of the viscous approximations.

For $\rho>0$, define
\[
\Pi(\rho):=
\begin{cases}
\displaystyle \frac{\rho^\gamma-\gamma\rho+\gamma-1}{\gamma-1}, &\gamma>1,\\[2mm]
\rho\log\rho-\rho+1, &\gamma=1.
\end{cases}
\]
Then
\[
\Pi''(\rho)=\frac{p'(\rho)}{\rho}, \quad \Pi(\rho_1 | \rho_2):=\Pi(\rho_1)-\Pi(\rho_2)-\Pi'(\rho_2)(\rho_1-\rho_2).
\]

\begin{lemma}\label{lem:stab}
Let $d\ge2$, $0<\alpha\le\frac12$, and
\[
s>\frac d2+1-2\alpha.
\]
Let $(h_i,u_i)$, $i=1,2$, be two strong solutions to \eqref{h-u-system} on $[0,T]$, and set $\rho_i:=1+h_i$. Assume that
\[
\frac12\le\rho_i(t,x)\le\frac32, \quad \sup_{0\le t\le T} \lt( \frac{\|h_i(t)\|_{H^s}}{\e}+\|u_i(t)\|_{H^s} \rt) \le\eta, \quad i=1,2,
\]
for a sufficiently small constant $\eta>0$, and that
\[
\nabla h_i\in L^2(0,T;H^{s+\alpha-1}(\R^d)), \quad \Lambda^\alpha u_i\in L^2(0,T;H^s(\R^d)).
\]
Then there exists $C>0$, independent of $\e$, such that
\[
\frac{\|h_1(t)-h_2(t)\|_{L^2}^2}{\e^2} +\|u_1(t)-u_2(t)\|_{L^2}^2 \le C_T\lt( \frac{\|h_1(0)-h_2(0)\|_{L^2}^2}{\e^2} +\|u_1(0)-u_2(0)\|_{L^2}^2 \rt)
\]
for all $t\in[0,T]$, where
\[
C_T = C\exp\lt[ C\int_0^T\lt( 1+\frac{\|\nabla h_2(\tau)\|_{H^{s+\alpha-1}}^2}{\e^2} \rt)\rd\tau \rt].
\]
In particular, the solution is unique in this class.
\end{lemma}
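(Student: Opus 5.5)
We prove Lemma \ref{lem:stab} with a relative-energy (weighted $L^2$) estimate built on $\Pi$. Set $\tilde h:=h_1-h_2$ and $\tilde u:=u_1-u_2$, and consider
\[
\calE(t):=\frac{1}{\e^2}\intr \Pi(\rho_1|\rho_2)\,\dx+\frac12\intr \rho_1|\tilde u|^2\,\dx .
\]
Since $\frac12\le\rho_i\le\frac32$, we have $\Pi''\simeq1$, so $\calE(t)\simeq \e^{-2}\|\tilde h\|_{L^2}^2+\|\tilde u\|_{L^2}^2$. We will show $\calE'(t)\le C\big(1+\e^{-2}\|\nabla h_2\|_{H^{s+\alpha-1}}^2\big)\calE(t)$ up to terms absorbed by the alignment dissipation. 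Gronwall then gives the bound with the stated $C_T$, and uniqueness follows by taking equal initial data.

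For the time derivative we use the momentum form $\rho_i(\pa_t u_i+u_i\cdot\nabla u_i)+\e^{-2}\nabla p(\rho_i)=\rho_i\intr\phi(x-y)(u_i(y)-u_i(x))\rho_i(y)\,\dy$. The standard relative-entropy computation produces three groups of terms.
\begin{enumerate}[label=(\roman*)]
\item Transport terms such as $\intr \rho_1(\tilde u\cdot\nabla u_2)\cdot\tilde u\,\dx$, bounded by $\|\nabla u_2\|_{L^\infty}\calE$. Since $\nabla u_2$ need not lie in $L^\infty$ when $s<\frac d2+1$, we instead use $\|\nabla u_2\|_{L^{d/\alpha}}\|\tilde u\|_{L^2}\|\tilde u\|_{L^{2d/(d-2\alpha)}}\le \delta\|\Lambda^\alpha\tilde u\|_{L^2}^2+C_\delta\|\Lambda^\alpha u_2\|_{H^s}^2\|\tilde u\|_{L^2}^2$, and note that $\int_0^T\|\Lambda^\alpha u_2\|_{H^s}^2\,\rd\tau\le C$ by the a priori bounds.
\item Pressure terms $\e^{-2}\intr p(\rho_1|\rho_2)\nabla\cdot u_2\,\dx$ together with the cross term $\e^{-2}\intr \tilde h\,\tilde u\cdot\nabla(\cdot)h_2\,\dx$ coming from $\Pi'(\rho_2)$. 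These are controlled by $\e^{-2}\|\tilde h\|_{L^2}\|\tilde u\|_{L^{2d/(d-2\alpha)}}\|\nabla h_2\|_{L^{d/\alpha}}\le \delta\|\Lambda^\alpha\tilde u\|_{L^2}^2+C\e^{-4}\|\nabla h_2\|^2_{L^{d/\alpha}}\|\tilde h\|_{L^2}^2$. Since $\|\nabla h_2\|_{L^{d/\alpha}}\lesssim \|\nabla h_2\|_{H^{s+\alpha-1}}$ (because $\frac d2-\alpha\le s+\alpha-1$), this term has exactly the weight $\e^{-2}\|\nabla h_2\|_{H^{s+\alpha-1}}^2$ times $\e^{-2}\|\tilde h\|^2_{L^2}$ appearing in $C_T$.
\item Alignment terms.
\end{enumerate}

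The alignment terms are the main obstacle. We symmetrize:
\[
\intr\rho_1\tilde u\cdot \mathcal A_1\,\dx - \intr\rho_1\tilde u\cdot\frac{\rho_2}{\rho_1}\mathcal A_2\,\dx,
\]
which, up to the $(\rho_1-\rho_2)$ mismatch, gives
\[
-\frac12\iint\phi(x-y)\rho_1(x)\rho_1(y)|\tilde u(x)-\tilde u(y)|^2\,\rd x\,\rd y\le -c\|\Lambda^\alpha\tilde u\|_{L^2}^2 ,
\]
using $\rho_1\ge\frac12$. The remainder is
\[
\iint\phi(x-y)\big(\rho_1(x)\rho_1(y)-\rho_2(x)\rho_2(y)\big)\,\tilde u(x)\cdot(u_2(y)-u_2(x))\,\rd x\,\rd y .
\]
We write the density difference as $\tilde h(x)\rho_1(y)+\rho_2(x)\tilde h(y)$. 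In commutator form this remainder equals pairings of $\tilde u$ with $\Lambda^{2\alpha}(\tilde h u_2)-u_2\Lambda^{2\alpha}\tilde h$ and $\tilde h\,[\Lambda^{2\alpha},u_2]h_1$-type terms. We move $\Lambda^\alpha$ onto $\tilde u$ and use the commutator bound \eqref{eq:frac_prod2} with $\sigma=0$ and the product estimate \eqref{eq:frac_prod1}. This yields $\|\Lambda^\alpha\tilde u\|_{L^2}\|\tilde h\|_{L^2}\|\Lambda^\alpha u_2\|_{L^\infty}$ plus the same with $\nabla u_2$. These are bounded by $\delta\|\Lambda^\alpha \tilde u\|^2+C\|\Lambda^\alpha u_2\|_{H^s}^2\,\e^{-2}\|\tilde h\|^2$, where $\e\le1$ and $\Lambda^\alpha u_2\in L^\infty$ follow from $s+\alpha>\frac d2+\alpha$ together with a Fourier $L^1$ bound as in Lemma \ref{lem:low-four-inte}.

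The delicate point is avoiding any $\|\Lambda^{2\alpha}\tilde h\|$. The difference must only be placed in $L^2$ (weighted by $\e^{-1}$). We therefore always keep $\tilde h$ undifferentiated and put all fractional derivatives on $u_2$ or on $\tilde u$ (up to order $\alpha$). The smallness $\eta$ ensures $\rho_1\simeq1$, so the weighted dissipation dominates.

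Collecting (i)–(iii), choosing $\delta$ small, and integrating in time gives the claim, with constants independent of $\e$.
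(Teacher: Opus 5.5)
\textbf{There is a genuine gap, in item (ii).} If you carry out the relative-energy computation exactly, the $\Pi'(\rho_2)$ contributions cancel against the $u_2$-momentum equation. No cross term $\e^{-2}\intr\tilde h\,\tilde u\cdot\nabla h_2\,\dx$ survives, and the whole pressure contribution is
\[
-\frac1{\e^2}\intr\big[p(\rho_1)-p(\rho_2)-p'(\rho_2)\tilde h\big]\,\nabla\cdot u_2\,\dx .
\]
This term is quadratic in $\tilde h$ and contains no $\tilde u$, so it cannot be split against the alignment dissipation $\|\Lambda^\alpha\tilde u\|_{L^2}^2$. Your bound in (ii) does not apply to it.

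With $\tilde h$ kept in $L^2$, the only available bound is $C\e^{-2}\|\nabla u_2\|_{L^\infty}\|\tilde h\|_{L^2}^2$. But $\nabla u_2\in L^1(0,T;L^\infty)$ is not implied by $u_2\in L^\infty(0,T;H^s)\cap L^2(0,T;\dot H^{s+\alpha})$ unless $s+\alpha>\frac d2+1$. The lemma also covers the range $\frac d2+1-2\alpha<s\le\frac d2+1-\alpha$. Since the plain relative energy dissipates only $\tilde u$, the principle of never differentiating $\tilde h$ cannot close.

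The missing idea is a hypocoercive correction that produces $\e^{-2}$-weighted fractional dissipation of the density difference. The paper adds $\theta\lal\calK_\alpha\tilde h,\tilde u\ral$ with $\calK_\alpha=\lal\Lambda\ral^{2\alpha-2}\nabla$. Its principal pressure part gives $\e^{-2}\|\calD_\alpha\tilde h\|_{L^2}^2$, where $\calD_\alpha=\Lambda\lal\Lambda\ral^{\alpha-1}$. The term above is then bounded by $C\e^{-2}\|\nabla u_2\|_{L^{d/(2\alpha)}}\|\tilde h\|_{L^{2d/(d-2\alpha)}}^2\le C\eta\,\e^{-2}\|\Lambda^\alpha\tilde h\|_{L^2}^2$ and absorbed.

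Note also where the weight $\e^{-2}\|\nabla h_2\|_{H^{s+\alpha-1}}^2$ in $C_T$ comes from. It is not a cross term in the relative energy. It is generated by $\e^{-2}\lal\calK_\alpha\tilde h,(A_1-A_2)\nabla h_2\ral$, with $A_i=(1+h_i)^{\gamma-2}$, in the time derivative of this correction.

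\textbf{Two further problems.} First, your alignment bound $\|\Lambda^\alpha\tilde u\|_{L^2}\|\tilde h\|_{L^2}\|\Lambda^\alpha u_2\|_{L^\infty}$ is false. The remainder is $-\intr\rho_1\tilde u\cdot[\Lambda^{2\alpha},u_2]\tilde h\,\dx$; the correct density factor is $\rho_1(x)\tilde h(y)$, not $\rho_1(x)\rho_1(y)-\rho_2(x)\rho_2(y)$. When $\tilde h$ and $u_2$ oscillate at frequency $N$ and the output is at frequency one, the commutator has size $N^{2\alpha}$. The pairing then exceeds your right side by a factor $N^{\alpha}$. Moving $\Lambda^\alpha$ onto $\tilde u$ does not help, since it only introduces $\Lambda^{-\alpha}$ at low output frequency.

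One can instead pair $\tilde u\in L^{2d/(d-2\alpha)}$ against $\|\Lambda^{2\alpha}u_2\|_{L^{d/\alpha}}\lesssim\|\Lambda^\alpha u_2\|_{H^s}$, plus the commutator remainder. However, this, like your transport bound, puts $\int_0^T\|\Lambda^\alpha u_2\|_{H^s}^2\,\rd\tau$ into the Gronwall exponent. That quantity is not part of the stated $C_T$, and the lemma only assumes it is finite, not bounded by a constant.

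The paper avoids such weights in two ways. It uses $\|\nabla u_2\|_{L^{d/(2\alpha)}}\lesssim\|u_2\|_{H^s}\le\eta$ for the transport and pressure terms. In the alignment term it places $\alpha$ derivatives on $\tilde h$, which again relies on the recovered density dissipation.
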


\begin{proof}
Set
\[
H:=h_1-h_2, \quad U:=u_1-u_2, \quad A_i:=(1+h_i)^{\gamma-2}, \quad i=1,2,
\]
and define
\[
\calC(f,v):=-\Lambda^{2\alpha}(fv)+v\Lambda^{2\alpha}f.
\]
Subtracting the two equations gives
\bq\label{eq:d-h}
\pa_tH+\nabla\cdot U=-\nabla\cdot\lt(h_1U+Hu_2\rt)
\eq
and
\bq\label{eq:d-u}
\pa_tU+u_1\cdot\nabla U+U\cdot\nabla u_2 +\frac{\gamma}{\e^2}A_1\nabla H+\Lambda^{2\alpha}U  =-\frac{\gamma}{\e^2}(A_1-A_2)\nabla h_2 +\calC(h_1,U)+\calC(H,u_2).
\eq
We first consider the relative energy
\[
\calE_0(t):=\frac12\intr\rho_1|U|^2\,\dx+\frac1{\e^2}\intr\Pi(\rho_1 | \rho_2)\,\dx.
\]
Since $\rho_1$ and $\rho_2$ remain in a fixed compact subset of $(0,\infty)$, Taylor's theorem gives
\bq\label{eq:E0-equiv}
c_0\lt(\|U\|_{L^2}^2+\frac{\|H\|_{L^2}^2}{\e^2}\rt) \le\calE_0 \le C_0\lt(\|U\|_{L^2}^2+\frac{\|H\|_{L^2}^2}{\e^2}\rt).
\eq
A direct relative-energy computation yields
\[
\begin{aligned}
\frac\rd\dt\calE_0 &+\frac12\intor\phi(x-y)\rho_1(x)\rho_1(y)|U(x)-U(y)|^2\,\dx\dy\\
&=-\intr\rho_1U\otimes U:\nabla u_2\,\dx -\frac1{\e^2}\intr \lt[p(\rho_1)-p(\rho_2)-p'(\rho_2)H\rt] \nabla\cdot u_2\,\dx +\intr\rho_1U\cdot\calC(H,u_2)\,\dx.
\end{aligned}
\]
Here, Sobolev embedding and the condition $s>\frac d2+1-2\alpha$ give
\[
\|\nabla u_2\|_{L^{\frac d{2\alpha}}} \le C\|\nabla u_2\|_{\dot H^{\frac d2-2\alpha}} \le C\|u_2\|_{H^s}.
\]
Hence
\[
\lt|\intr\rho_1U\otimes U:\nabla u_2\,\dx\rt| \le C\|\nabla u_2\|_{L^{\frac d{2\alpha}}} \|U\|_{L^{\frac{2d}{d-2\alpha}}}^2 \le C\|u_2\|_{H^s}\|\Lambda^\alpha U\|_{L^2}^2 \le C\eta\|\Lambda^\alpha U\|_{L^2}^2.
\]
Taylor's theorem also gives

\[
|p(\rho_1) - p(\rho_2) - p'(\rho_2) H| \le C|H|^2.
\]
Consequently, H\"older's inequality and the homogeneous Sobolev embedding yield
\begin{align*}
&\frac1{\e^2} \lt| \intr [p(\rho_1) - p(\rho_2) - p'(\rho_2)H] (\nabla\cdot u_2)\,\dx \rt| \\
&\quad \le \frac C{\e^2} \|\nabla u_2\|_{L^{\frac d{2\alpha}}} \|H\|_{L^{\frac{2d}{d-2\alpha}}}^2 \le C\|u_2\|_{H^s} \frac{\|\Lambda^\alpha H\|_{L^2}^2}{\e^2} \le C\eta \frac{\|\Lambda^\alpha H\|_{L^2}^2}{\e^2}.
\end{align*}
For the remaining term, one proceeds as \eqref{eq:hu-low} to get
\[
\begin{aligned}
\|\Lambda^\alpha (h_1 U)\|_{L^2}  \le C\|h_1\|_{H^s}\|\Lambda^\alpha U\|_{L^2}.
\end{aligned}
\]
Similarly, we obtain
\[\begin{aligned}
\|\Lambda^\alpha (Hu_2)\|_{L^2} &\le C( \|\Lambda^\alpha H\|_{L^2}\|u_2\|_{L^\infty}+ \|H\|_{L^{\frac{1}{\frac12-\frac\alpha d}}} \|\Lambda^\alpha u_2\|_{L^{\frac d\alpha}}) \le C\|u_2\|_{H^s} \|\Lambda^\alpha H\|_{L^2}
\end{aligned}\]
and
\[\begin{aligned}
\|\Lambda^\alpha (\rho_1 U\cdot u_2)\|_{L^2} &\le C\lt(\|\Lambda^\alpha (\rho_1 U)\|_{L^2}  \|u_2\|_{L^\infty} + \|\rho_1 U\|_{L^{\frac12-\frac\alpha d}} \|\Lambda^\alpha u_2\|_{L^{\frac d\alpha}}\rt)\\
&\le C\|u_2\|_{H^s} (\|\Lambda^\alpha U\|_{L^2} + \|\Lambda^\alpha (h_1 U)\|_{L^2})\\
&\le C\|u_2\|_{H^s}(1+\|h_1\|_{H^s})\|\Lambda^\alpha U\|_{L^2},
\end{aligned}\]
where we used $s>\frac d2$.

Then, the self-adjointness of $\Lambda^\alpha$ gives
\[\begin{aligned}
\lt| \intr \rho_1 U \cdot \calC (H, u_2)\,\dx\rt|&\le C\|\Lambda^\alpha (\rho_1 U)\|_{L^2}\|\Lambda^\alpha (H u_2)\|_{L^2}+ C\|\Lambda^\alpha H\|_{L^2}\|\Lambda^\alpha (\rho_1 U\cdot u_2)\|_{L^2}\\
&\le C\|u_2\|_{H^s}(1+\|h_1\|_{H^s}) \|\Lambda^\alpha U\|_{L^2} \|\Lambda^\alpha H\|_{L^2}.
\end{aligned}\]
Together with the preceding estimates, we obtain
\bq\label{eq:l2_diff_est}
\frac\rd\dt\calE_0+c\|\Lambda^\alpha U\|_{L^2}^2 \le C\eta\lt( \calE_0+\frac{\|\Lambda^\alpha H\|_{L^2}^2}{\e^2} +\|\Lambda^\alpha U\|_{L^2}^2 \rt).
\eq
To recover the fractional density norm, set
\[
\lal\Lambda\ral:=(I-\Delta)^{1/2}, \quad \calK_\alpha:=\lal\Lambda\ral^{2\alpha-2}\nabla, \quad \calD_\alpha:=\Lambda\lal\Lambda\ral^{\alpha-1}.
\]
The Fourier symbol of $\calK_\alpha$ is $m_\alpha(\xi) = i\xi\lal\xi\ral^{2\alpha-2}$. Since $0<\alpha\le\frac12$, we have $|m_\alpha(\xi)| \le C$,
\[
\lal\xi\ral^{1-\alpha}|m_\alpha(\xi)|  = |\xi|\lal\xi\ral^{\alpha-1} \le C\lal\xi\ral^\alpha,\quad
|\xi|^\alpha|m_\alpha(\xi)|  = |\xi|^{1+\alpha}\lal\xi\ral^{2\alpha-2} \le C\lal\xi\ral^\alpha.
\]
The last inequality uses $2\alpha\le1$ at high frequencies. Plancherel's theorem now gives
\bq\label{eq:K-est}
\|\calK_\alpha f\|_{L^2} \le C\|f\|_{L^2}, \quad \|\calK_\alpha f\|_{H^{1-\alpha}} + \|\Lambda^\alpha\calK_\alpha f\|_{L^2} \le C\|f\|_{H^\alpha}.
\eq
Moreover, $\lal\xi\ral^{2\alpha} \le C\lt( 1+|\xi|^2\lal\xi\ral^{2\alpha-2} \rt)$, and thus
\bq\label{eq:D-equiv}
\|f\|_{H^\alpha}^2 \le C\lt( \|f\|_{L^2}^2 + \|\calD_\alpha f\|_{L^2}^2 \rt).
\eq
Introduce
\[
\calX(t):=\lal\calK_\alpha H,U\ral, \quad \calE_\theta:=\calE_0+\theta\calX.
\]
Since $|\calX|\le C\|H\|_{L^2}\|U\|_{L^2}\le C\calE_0$, $\calE_\theta$ is equivalent to $\calE_0$ for $\theta>0$ sufficiently small, uniformly for $0<\e\le1$.

Set $R:=h_1U+Hu_2$. Using \eqref{eq:d-h} and \eqref{eq:d-u}, we obtain
\begin{align*}
\frac\rd\dt\calX &=\|\lal\Lambda\ral^{\alpha-1}\nabla\cdot U\|_{L^2}^2 +\lal\lal\Lambda\ral^{\alpha-1}\nabla\cdot R, \lal\Lambda\ral^{\alpha-1}\nabla\cdot U\ral -\lal\calK_\alpha H,u_1\cdot\nabla U+U\cdot\nabla u_2\ral \\
&\quad -\frac{\gamma}{\e^2}\lal\calK_\alpha H,A_1\nabla H\ral -\frac{\gamma}{\e^2}\lal\calK_\alpha H,(A_1-A_2)\nabla h_2\ral -\lal\calK_\alpha H,\Lambda^{2\alpha}U\ral +\lal\calK_\alpha H,\calC(h_1,U)+\calC(H,u_2)\ral.
\end{align*}
We also have $|\xi|\lal\xi\ral^{\alpha-1} \le C|\xi|^\alpha$, $0<\alpha\le1$. Indeed, the left-hand side is bounded by $C|\xi|$ for $|\xi|\le1$ and is comparable to $|\xi|^\alpha$ for $|\xi|\ge1$. Plancherel's theorem gives
\[
\|\lal\Lambda\ral^{\alpha-1}\nabla\cdot U\|_{L^2} \le C\|\Lambda^\alpha U\|_{L^2}.
\]
Note that
\[
\|R\|_{H^\alpha}\le C\eta\lt(\|U\|_{H^\alpha}+\|H\|_{H^\alpha}\rt),
\]
and hence
\[
\lt| \lal\lal\Lambda\ral^{\alpha-1}\nabla\cdot R, \lal\Lambda\ral^{\alpha-1}\nabla\cdot U\ral \rt| \le C\eta\lt( \calE_0+\frac{\|\calD_\alpha H\|_{L^2}^2}{\e^2} +\|\Lambda^\alpha U\|_{L^2}^2 \rt).
\]
For the principal pressure term,
\[
\lal\calK_\alpha H,\nabla H\ral = \|\calD_\alpha H\|_{L^2}^2.
\]
Writing $A_1=1+(A_1-1)$, we obtain
\[
\begin{aligned}
 \lt| \lal\calK_\alpha H,(A_1-1)\nabla H\ral \rt|
& \le \|\calK_\alpha H\|_{H^{1-\alpha}} \|(A_1-1)\nabla H\|_{H^{\alpha-1}}\\
& \le C\|A_1-1\|_{H^s}\|H\|_{H^\alpha}^2\\
& \le C\eta\lt( \|H\|_{L^2}^2 + \|\calD_\alpha H\|_{L^2}^2 \rt).
\end{aligned}
\]
Here we used  $\|A_1-1\|_{H^s} \le C\|h_1\|_{H^s} \le C\eta$. After reducing $\eta$ if necessary, the term containing $\|\calD_\alpha H\|_{L^2}^2$ is absorbed into the principal part.
Thus
\bq\label{eq:p-coer}
\lal\calK_\alpha H,A_1\nabla H\ral \ge \frac12\|\calD_\alpha H\|_{L^2}^2 - C\eta\|H\|_{L^2}^2.
\eq

For the remaining pressure term, the mean value theorem gives
\[
\|A_1-A_2\|_{L^2} \le C\|H\|_{L^2}.
\]
By \eqref{eq:K-est} and Sobolev embedding,
\[
\|\calK_\alpha H\|_{L^{\frac{2d}{d-2+2\alpha}}}  \le C\|\calK_\alpha H\|_{H^{1-\alpha}} \le C\|H\|_{H^\alpha},\quad
\|\nabla h_2\|_{L^{\frac d{1-\alpha}}}  \le C\|\nabla h_2\|_{\dot H^{\frac d2-1+\alpha}} \le C\|\nabla h_2\|_{H^{s+\alpha-1}}.
\]
The last inequality follows from $s>\frac d2$. Moreover,
\[
\frac{d-2+2\alpha}{2d} + \frac12 + \frac{1-\alpha}{d} = 1.
\]
Set
\[
M_2(t) := \frac1{\e} \|\nabla h_2(t)\|_{H^{s+\alpha-1}}.
\]
Then H\"older's inequality, \eqref{eq:D-equiv}, \eqref{eq:E0-equiv}, and Young's inequality give, for every $\kappa>0$,
\[
\frac1{\e^2} \lt| \lal\calK_\alpha H,(A_1-A_2)\nabla h_2\ral \rt| \le \frac C{\e^2} \|H\|_{H^\alpha} \|H\|_{L^2} \|\nabla h_2\|_{H^{s+\alpha-1}} \le \frac{\kappa}{\e^2} \|\calD_\alpha H\|_{L^2}^2 + C_\kappa\lt(1+M_2(t)^2\rt)\calE_0.
\]
For the first convective term, duality, \eqref{eq:K-est}, and the multiplier property of $H^s$ give
\[
\lt| \lal\calK_\alpha H,u_1\cdot\nabla U\ral \rt| \le \|\calK_\alpha H\|_{H^{1-\alpha}} \|u_1\cdot\nabla U\|_{H^{\alpha-1}} \le C\|u_1\|_{H^s} \|H\|_{H^\alpha} \|U\|_{H^\alpha}.
\]
For the second convective term,
\[
\lt| \lal\calK_\alpha H,U\cdot\nabla u_2\ral \rt| \le C\|\calK_\alpha H\|_{L^{\frac{2d}{d-2+2\alpha}}} \|U\|_{L^{\frac{2d}{d-2\alpha}}} \|\nabla u_2\|_{L^d} \le C\|u_2\|_{H^s} \|H\|_{H^\alpha} \|\Lambda^\alpha U\|_{L^2}.
\]
These estimates imply
\[
\lt|\lal\calK_\alpha H,u_1\cdot\nabla U+U\cdot\nabla u_2\ral\rt| \le C\eta\lt( \calE_0+\frac{\|\calD_\alpha H\|_{L^2}^2}{\e^2} +\|\Lambda^\alpha U\|_{L^2}^2 \rt).
\]
For the linear fractional term, \eqref{eq:K-est} gives
\[
\lt|\lal\calK_\alpha H,\Lambda^{2\alpha}U\ral\rt| \le C\|H\|_{H^\alpha}\|\Lambda^\alpha U\|_{L^2}.
\]
Hence, for every $\kappa>0$,
\[
\lt|\lal\calK_\alpha H,\Lambda^{2\alpha}U\ral\rt| \le \frac{\kappa}{\e^2}\|\calD_\alpha H\|_{L^2}^2 +C_\kappa\|\Lambda^\alpha U\|_{L^2}^2 +C_\kappa\calE_0.
\]
For the nonlinear alignment terms, fractional product estimates \eqref{eq:hu-low} give
\[\begin{aligned}
\lt|\lal\calK_\alpha H,\Lambda^{2\alpha}(h_1U)\ral\rt|   \le C\|\Lambda^\alpha \mathcal{K}_\alpha H\|_{L^2} \|\Lambda^\alpha(h_1 U)\|_{L^2} \le C\|h_1\|_{H^s}\|H\|_{H^\alpha}\|U\|_{H^\alpha}.
\end{aligned}\]
Choose $q_1,q_2$ by
\[
\frac1{q_1}=\frac12-\frac{1-\alpha}{d},
\quad
\frac1{q_2}=\frac12-\frac{\alpha}{d}.
\]
Then $\frac1{q_1}+\frac1{q_2}+\frac1d=1$, and
\[
\|\calK_\alpha H\|_{L^{q_1}}\le C\|H\|_{H^\alpha},
\quad
\|U\|_{L^{q_2}}\le C\|U\|_{H^\alpha}.
\]
Since $0<\alpha\le\frac12$ and $s>\frac d2+1-2\alpha$, we get
\[
\|\Lambda^{2\alpha}h_1\|_{L^d}\le C\|h_1\|_{H^s},
\]
and hence
\[
\lt|\intr\calK_\alpha H\cdot U\Lambda^{2\alpha}h_1\,\dx\rt| \le C\|h_1\|_{H^s}\|H\|_{H^\alpha}\|U\|_{H^\alpha}.
\]
Similarly,
\[
\lt|\lal\calK_\alpha H,\Lambda^{2\alpha}(Hu_2)\ral\rt| + \lt|\lal\calK_\alpha H,u_2\Lambda^{2\alpha}H\ral\rt| \le C\|u_2\|_{H^s}\|H\|_{H^\alpha}^2.
\]
Combining these bounds gives
\[
\lt| \lal\calK_\alpha H,\calC(h_1,U)+\calC(H,u_2)\ral \rt| \le C\eta\lt( \calE_0+\frac{\|\calD_\alpha H\|_{L^2}^2}{\e^2} +\|\Lambda^\alpha U\|_{L^2}^2 \rt).
\]
Now, we combine \eqref{eq:l2_diff_est} with $\theta\frac\rd\dt\calX$ for $\theta>0$ sufficiently small and write $\mathcal{E}_\theta$ as
\[
\mathcal{E}_\theta := \mathcal{E}_0 + \theta\mathcal{X}.
\]
For the choice of $\theta$, we first choose $\kappa>0$ so that the terms containing $\e^{-2}\|\calD_\alpha H\|_{L^2}^2$ are absorbed by \eqref{eq:p-coer}. Next choose $\theta>0$ so that the terms proportional to $\theta C_\kappa\|\Lambda^\alpha U\|_{L^2}^2$ are absorbed by the alignment dissipation. A further reduction of $\eta$ absorbs the remaining nonlinear terms and gives
\[
\frac\rd\dt\calE_\theta +c_1\|\Lambda^\alpha U\|_{L^2}^2 +\frac{c_2}{\e^2}\|\calD_\alpha H\|_{L^2}^2 \le C\lt(1+M_2(t)^2\rt)\calE_\theta.
\]
Since
\[
\int_0^T M_2(t)^2\,\dt = \int_0^T\frac{\|\nabla h_2(t)\|_{H^{s+\alpha-1}}^2}{\e^2}\,\dt <\infty,
\]
Gr\"onwall's inequality and \eqref{eq:E0-equiv} give the stated bound. When the initial data agree, $\calE_\theta(0)=0$, so $H\equiv0$ and $U\equiv0$.
\end{proof}

%
%
%
%
%
%

\subsubsection{Viscous construction in the low-order regime}

We construct solutions by a viscous approximation, following the standard scheme for Euler-type systems; see, for instance, \cite[proof of Theorem 3.1]{CJ22}. Let $J_\delta$ be a nonnegative Friedrichs mollifier and set
\[
h_{0,\delta}:=J_\delta h_0, \quad u_{0,\delta}:=J_\delta u_0.
\]
After reducing the smallness constant if necessary, Sobolev embedding gives
\[
\frac12\le1+h_0\le\frac32.
\]
Since $J_\delta$ is positivity preserving and preserves constants, the same bound holds for $1+h_{0,\delta}$. Consider
\bq\label{eq:visc}
\begin{aligned}
&\pa_t h_\delta +\nabla\cdot\lt((1+h_\delta)u_\delta\rt) =\delta\Delta h_\delta,\\
&\pa_t u_\delta +u_\delta\cdot\nabla u_\delta +\frac{\gamma}{\e^2}(1+h_\delta)^{\gamma-2}\nabla h_\delta =-\Lambda^{2\alpha}u_\delta -\Lambda^{2\alpha}(h_\delta u_\delta) +u_\delta\Lambda^{2\alpha}h_\delta +\delta\Delta u_\delta,
\end{aligned}
\eq
with initial data $(h_{0,\delta},u_{0,\delta})$. For each fixed $\delta>0$, standard semilinear parabolic theory gives a smooth solution on a maximal time interval. The approximate solution can be continued as long as its $H^s$ norm remains bounded and $1+h_\delta$ remains in a compact subset of $(0,\infty)$.

The a priori argument of Proposition \ref{prop:uni-apri-low} extends to \eqref{eq:visc}. We record the additional contribution generated by the density viscosity. Define
\[
a(z):=(1+z)^{\gamma-3}, \quad a_\delta:=a(h_\delta), \quad g_r:=\Lambda^r h_\delta, \quad r=0,s.
\]
For
\[
\calE_r(t) := \frac12\intr a_\delta|g_r|^2 \,\dx,
\]
we have
\[
\frac\rd\dt\calE_r = \frac12\intr a'(h_\delta)\pa_t h_\delta|g_r|^2 \,\dx + \intr a_\delta g_r\pa_t g_r \,\dx.
\]
The density viscosity in \eqref{eq:visc} gives
\[
(\pa_t h_\delta)_{\rm visc} = \delta\Delta h_\delta, \quad (\pa_t g_r)_{\rm visc} = \delta\Delta g_r.
\]
Hence its contribution to $\frac\rd\dt\calE_r$ is
\[
\calV_r = \frac{\delta}{2} \intr a'(h_\delta)\Delta h_\delta|g_r|^2 \,\dx + \delta\intr a_\delta g_r\Delta g_r \,\dx.
\]
For the first term, integration by parts gives
\[
\frac{\delta}{2} \intr a'(h_\delta)\Delta h_\delta|g_r|^2 \,\dx = -\frac{\delta}{2} \intr a''(h_\delta) |\nabla h_\delta|^2|g_r|^2 \,\dx -\delta\intr a'(h_\delta) \nabla h_\delta\cdot\nabla g_r\,g_r \,\dx.
\]
Similarly,
\[
\delta\intr a_\delta g_r\Delta g_r \,\dx  = -\delta\intr a_\delta|\nabla g_r|^2 \,\dx -\delta\intr a'(h_\delta) \nabla h_\delta\cdot\nabla g_r\,g_r \,\dx.
\]
Combining the two identities, we obtain
\[
\calV_r = -\delta\intr a_\delta|\nabla g_r|^2 \,\dx  -2\delta\intr a'(h_\delta) \nabla h_\delta\cdot\nabla g_r\,g_r \,\dx  -\frac{\delta}{2} \intr a''(h_\delta) |\nabla h_\delta|^2|g_r|^2 \,\dx.
\]
Since $\frac12 \le 1+h_\delta \le \frac32$, there exist constants $c,C>0$ such that

\[
a_\delta\ge c, \quad |a'(h_\delta)| + |a''(h_\delta)| \le C.
\]
For $r=0$, we have $g_0=h_\delta$, and hence
\[
\begin{aligned}
\calV_0 &\le -c\delta\|\nabla h_\delta\|_{L^2}^2  + C\delta \lt( \|h_\delta\|_{L^\infty} + \|h_\delta\|_{L^\infty}^2 \rt) \|\nabla h_\delta\|_{L^2}^2\\
&\le -c\delta\|\nabla h_\delta\|_{L^2}^2  + C\delta \lt( \|h_\delta\|_{H^s} + \|h_\delta\|_{H^s}^2 \rt) \|\nabla h_\delta\|_{L^2}^2.
\end{aligned}
\]
We next consider $r=s$. If $d>2$, Sobolev embedding gives

\[
\|\nabla h_\delta\|_{L^d} \le C\|h_\delta\|_{H^s}
\quad
\text{and}
\quad
\|g_s\|_{L^{\frac{2d}{d-2}}} \le C\|\nabla g_s\|_{L^2}.
\]
Consequently,
\[
\intr |\nabla h_\delta| |\nabla g_s| |g_s| \,\dx \le \|\nabla h_\delta\|_{L^d} \|\nabla g_s\|_{L^2} \|g_s\|_{L^{\frac{2d}{d-2}}} \le C\|h_\delta\|_{H^s} \|\nabla g_s\|_{L^2}^2
\]
and
\[
\intr |\nabla h_\delta|^2|g_s|^2 \,\dx \le \|\nabla h_\delta\|_{L^d}^2 \|g_s\|_{L^{\frac{2d}{d-2}}}^2 \le C\|h_\delta\|_{H^s}^2 \|\nabla g_s\|_{L^2}^2.
\]
If $d=2$, choose $p>2$ sufficiently close to $2$ so that $H^{s-1}(\R^2) \hookrightarrow L^p(\R^2)$, and set $q:=\frac{2p}{p-2}$. Since $s>1$, we find
\[
\|g_s\|_{H^1} = \|\Lambda^s h_\delta\|_{H^1} \le C\lt( \|\nabla h_\delta\|_{L^2} + \|\Lambda^{s+1}h_\delta\|_{L^2} \rt) \le C\|\nabla h_\delta\|_{H^s}.
\]
Here the first inequality follows by splitting into low and high frequencies. The embedding $H^1(\R^2)\hookrightarrow L^q(\R^2)$ then gives $\|g_s\|_{L^q} \le C\|\nabla h_\delta\|_{H^s}$. It follows that
\[
\intr |\nabla h_\delta| |\nabla g_s| |g_s| \,\dx \le \|\nabla h_\delta\|_{L^p} \|\nabla g_s\|_{L^2} \|g_s\|_{L^q} \le C\|h_\delta\|_{H^s} \|\nabla h_\delta\|_{H^s}^2
\]
and
\[
\intr |\nabla h_\delta|^2|g_s|^2 \,\dx \le \|\nabla h_\delta\|_{L^p}^2 \|g_s\|_{L^q}^2 \le C\|h_\delta\|_{H^s}^2 \|\nabla h_\delta\|_{H^s}^2.
\]
Thus, in every dimension $d\ge2$,

\[
\calV_s \le -c\delta \|\nabla\Lambda^s h_\delta\|_{L^2}^2 + C\delta \lt( \|h_\delta\|_{H^s} + \|h_\delta\|_{H^s}^2 \rt) \|\nabla h_\delta\|_{H^s}^2.
\]
Combining the estimates for $\calV_0$ and $\calV_s$, we obtain
\[
\calV_0+\calV_s  \le -c\delta\|\nabla h_\delta\|_{H^s}^2 + C\delta \lt( \|h_\delta\|_{H^s} + \|h_\delta\|_{H^s}^2 \rt) \|\nabla h_\delta\|_{H^s}^2.
\]
Under the bootstrap smallness condition, the last term is absorbed into the first one. The velocity viscosity contributes to $-\delta\|\nabla u_\delta\|_{H^s}^2$. 

We also consider the mixed density-dissipation functionals
\[
\calX_0 := -\intr h_\delta\nabla\cdot u_\delta \,\dx
\quad
\text{and}
\quad
\calX_s := \frac12 \|\Lambda^{s+2\alpha-1}h_\delta\|_{L^2}^2 - \intr \Lambda^{s+2\alpha-1}h_\delta \Lambda^{s-1}\nabla\cdot u_\delta \,\dx.
\]
Their viscous contributions are
\[
(\calX_0)_{\rm visc} = 2\delta\intr \nabla h_\delta\cdot\nabla\nabla\cdot u_\delta \,\dx
\]
and
\[
(\calX_s)_{\rm visc} = -\delta \|\Lambda^{s+2\alpha}h_\delta\|_{L^2}^2  + 2\delta\intr \Lambda^{s+2\alpha-1}\nabla h_\delta\cdot \Lambda^{s-1}\nabla\nabla\cdot u_\delta \,\dx.
\]
Since $2\alpha\le1$, we get $s+2\alpha\le s+1$. Young's inequality now gives
\[
|(\calX_0)_{\rm visc}| \le C\delta\lt( \frac{\|\nabla h_\delta\|_{H^s}^2}{\e^2} + \|\nabla u_\delta\|_{H^s}^2 \rt)
\quad 
\text{and}
\quad
(\calX_s)_{\rm visc} \le C\delta\lt( \frac{\|\nabla h_\delta\|_{H^s}^2}{\e^2} + \|\nabla u_\delta\|_{H^s}^2 \rt).
\]
After multiplying the mixed estimates by the small constants used in the modified energy, these terms are absorbed by the viscous dissipation. As a result, we obtain
\bq\label{eq:visc-est}
\begin{aligned}
&\frac{\|h_\delta(t)\|_{H^s}^2}{\e^2} +\|u_\delta(t)\|_{H^s}^2  +\int_0^t\lt( \frac{\|\nabla h_\delta\|_{H^{s+\alpha-1}}^2}{\e^2} + \|\Lambda^\alpha u_\delta\|_{H^s}^2 \rt)\rd\tau  +\delta\int_0^t\lt( \frac{\|\nabla h_\delta\|_{H^s}^2}{\e^2} + \|\nabla u_\delta\|_{H^s}^2 \rt)\rd\tau\\
&\quad \le C\lt( \frac{\|h_{0,\delta}\|_{H^s}^2}{\e^2} + \|u_{0,\delta}\|_{H^s}^2 \rt),
\end{aligned}
\eq
where $C>0$ is independent of $\delta$ and $\e$.

Since $\|h_{0,\delta}\|_{H^s} \le \|h_0\|_{H^s}$ and $\|u_{0,\delta}\|_{H^s} \le \|u_0\|_{H^s}$, the first-exit argument based on \eqref{eq:visc-est} improves the bootstrap bound uniformly in $\delta$. The density remains between two positive constants, and the parabolic continuation criterion gives a common existence time $T_\e>0$ for all $\delta$, with $\e$ fixed. Set $T:=T_\e$. For fixed $\e>0$, the equations and \eqref{eq:visc-est} give uniform bounds for the time derivatives. Indeed, by Moser-type estimate,
\[
\|\nabla\cdot\lt((1+h_\delta)u_\delta\rt)\|_{H^{s-1}} \le C\lt( 1+\|h_\delta\|_{H^s} \rt) \|u_\delta\|_{H^s}.
\]
Moreover, since $2\alpha\le1$,
\begin{align*}
&\|u_\delta\cdot\nabla u_\delta\|_{H^{s-1}} + \|(1+h_\delta)^{\gamma-2}\nabla h_\delta\|_{H^{s-1}}  + \|\Lambda^{2\alpha}u_\delta\|_{H^{s-1}} + \|\Lambda^{2\alpha}(h_\delta u_\delta)-u_\delta\Lambda^{2\alpha}h_\delta\|_{H^{s-1}} \\
&\quad  \le C_\e\lt( 1+\|h_\delta\|_{H^s} +\|u_\delta\|_{H^s} \rt)^2.
\end{align*}
The constant $C_\e$ may depend on $\e$, which is fixed in the construction.  Indeed, the pointwise estimate

\[\begin{aligned}
||\xi|^{s+2\alpha -1} - |\xi|^{s-1} |\eta|^{2\alpha}| &\le C |\xi|^{s-1} |\xi-\eta|(|\xi|^{2\alpha-1} + |\eta|^{2\alpha-1})\\
&\le C\lt( |\xi-\eta|^{s+2\alpha -1} + |\xi-\eta| |\xi|^{s+2\alpha-2} + |\xi|^s |\eta|^{2\alpha-1}\rt).
\end{aligned}\]
Hence,
\[\begin{aligned}
&\|\Lambda^{s+2\alpha -1} (h_\delta u_\delta) - \Lambda^{s-1}(u_\delta \Lambda^{2\alpha} h_\delta)\|_{L^2}\\
&\quad \le C\|\hat u_\delta\|_{L_\xi^1} \|h_\delta\|_{H^{s+2\alpha-1}} + C\| |\xi| \hat h_\delta\|_{L_\xi^{\tilde{p}'}}\| |\xi|^{s+2\alpha-2} \hat u_\delta\|_{L_\xi^{\tilde{p}}} + C\|h_\delta\|_{H^s} \| |\xi|^{2\alpha-1} \hat u_\delta\|_{L_\xi^1}\\
&\quad \le C\|h_\delta\|_{H^s}\|u_\delta\|_{H^s},
\end{aligned}\]
where $\tilde{p}$, $\tilde{p}' \in (1,2)$ are given by
\[
\tilde p := \frac{2s+4\alpha-2}{s+2\alpha}, \quad \tilde p':= \frac{2s+4\alpha-2}{2s+4\alpha-3},
\]
which satisfy
\[\begin{aligned}
\||\xi|^{s+2\alpha-2} \hat f\|_{L_\xi^p} &\le C\| \hat f\|_{L_\xi^1}^{\frac{2-\tilde p}{\tilde p}} \| |\xi|^{s+2\alpha-1} \hat f\|_{L_\xi^2}^{\frac{2(\tilde p -1)}{\tilde p}}\le C\|f\|_{H^s}, \\
 \| |\xi|\hat f\|_{L_\xi^{\tilde p'}} &\le \| \hat f\|_{L_\xi^1}^{\frac{2-\tilde p'}{\tilde p'}} \| |\xi|^{s+2\alpha-1} \hat f\|_{L_\xi^2}^{\frac{2(\tilde p' -1)}{\tilde p'}}\le C\|f\|_{H^s}.
 \end{aligned}
\]
On the other hand, \eqref{eq:visc-est} gives
\[
\delta \|\Delta h_\delta\|_{L^2(0,T;H^{s-1})} + \delta \|\Delta u_\delta\|_{L^2(0,T;H^{s-1})} \le C\delta^{1/2}.
\]
Consequently, $\pa_t h_\delta$ and $\pa_t u_\delta$ are bounded in $L^2\lt(0,T;H^{s-1}(\R^d)\rt)$ uniformly in $\delta$.

Fix $R>0$ and $0<\sigma<1$. The compact and continuous embeddings $H^s(B_R) \Subset H^{s-\sigma}(B_R) \hookrightarrow H^{s-1}(B_R)$ allow us to apply the Aubin--Lions lemma on $B_R$. A diagonal argument
then yields, up to a subsequence,
\bq\label{eq:loc-conv}
h_\delta\to h, \quad u_\delta\to u \quad\mbox{strongly in }  L^2\lt(0,T;H^{s-\sigma}_{\rm loc}(\R^d)\rt).
\eq
We choose $\sigma>0$ sufficiently small so that $s-\sigma>\frac d2$ and $s-\sigma>\alpha$. Since $s-\sigma>\frac d2$, the space $H^{s-\sigma}_{\rm loc}$ is an algebra. The local strong convergence and the uniform $L^\infty(0,T;H^s)$ bound give
\[
(1+h_\delta)u_\delta \to (1+h)u \quad\mbox{strongly in }  L^2\lt(0,T;H^{s-\sigma}_{\rm loc}(\R^d)\rt)
\]
and
\[
u_\delta\cdot\nabla u_\delta \to u\cdot\nabla u \quad\mbox{strongly in }  L^2\lt(0,T;H^{s-\sigma-1}_{\rm loc}(\R^d)\rt).
\]
For the pressure term, define
\[
Q_\gamma(z) := \begin{cases}
\frac{\gamma}{\gamma-1}(1+z)^{\gamma-1}, & \gamma>1,\\[2mm]
\log(1+z), & \gamma=1.
\end{cases}
\]
Then
\[
\nabla Q_\gamma(h_\delta) = \gamma(1+h_\delta)^{\gamma-2}\nabla h_\delta.
\]
The density bounds and the Sobolev composition estimate give
\[
Q_\gamma(h_\delta) \to Q_\gamma(h) \quad\mbox{strongly in }  L^2\lt(0,T;H^{s-\sigma}_{\rm loc}(\R^d)\rt).
\]
Thus the continuity, convection, and pressure terms pass to the limit in the sense of distributions.

It remains to treat the nonlocal alignment terms. Since $s>\frac d2$,
\[
\|h_\delta u_\delta\|_{L^\infty(0,T;L^2)} \le C\|h_\delta\|_{L^\infty(0,T;H^s)} \|u_\delta\|_{L^\infty(0,T;L^2)} \le C.
\]
After extracting a further subsequence,
\[
h_\delta u_\delta \rightharpoonup G \quad\mbox{weakly in }  L^2\lt(0,T;L^2(\R^d)\rt).
\]
The local strong convergence \eqref{eq:loc-conv} identifies $G=hu$. Let $\varphi \in C_c^\infty\lt((0,T)\times\R^d;\R^d\rt)$. Since $\Lambda^{2\alpha}\varphi \in L^2\lt(0,T;L^2(\R^d)\rt)$, the weak convergence of $h_\delta u_\delta$ gives
\[
\int_0^T \lal \Lambda^{2\alpha}(h_\delta u_\delta), \varphi \ral \,\dt  = \int_0^T \lal h_\delta u_\delta, \Lambda^{2\alpha}\varphi \ral \,\dt \to \int_0^T \lal hu, \Lambda^{2\alpha}\varphi \ral \,\dt.
\]
Thus
\[
\Lambda^{2\alpha}(h_\delta u_\delta) \to \Lambda^{2\alpha}(hu)
\]
in the sense of distributions.

For the remaining alignment term,
\[
\int_0^T\intr u_\delta\Lambda^{2\alpha}h_\delta \cdot\varphi \,\dx\dt = \int_0^T\intr \Lambda^\alpha h_\delta \Lambda^\alpha(u_\delta\cdot\varphi) \,\dx\dt.
\]
The uniform $H^s$ bound gives
\[
\Lambda^\alpha h_\delta \rightharpoonup \Lambda^\alpha h \quad\mbox{weakly in }  L^2\lt(0,T;L^2(\R^d)\rt),
\]
whereas \eqref{eq:loc-conv} and $s-\sigma>\alpha$ give
\[
\Lambda^\alpha(u_\delta\cdot\varphi) \to \Lambda^\alpha(u\cdot\varphi) \quad\mbox{strongly in }  L^2\lt(0,T;L^2(\R^d)\rt).
\]
It follows that
\[
u_\delta\Lambda^{2\alpha}h_\delta \to u\Lambda^{2\alpha}h
\]
in the sense of distributions. The preceding viscous estimate also gives
\[
\delta \|\Delta h_\delta\|_{L^2(0,T;H^{s-1})} + \delta \|\Delta u_\delta\|_{L^2(0,T;H^{s-1})} \le C\delta^{1/2} \to0.
\]
Hence
\[
\delta\Delta h_\delta\to0, \quad \delta\Delta u_\delta\to0 \quad\mbox{strongly in }  L^2\lt(0,T;H^{s-1}(\R^d)\rt).
\]
It follows that $(h,u)$ solves \eqref{h-u-system}. The uniform bounds and weak lower semicontinuity give
\[
\nabla h \in L^2\lt(0,T;H^{s+\alpha-1}(\R^d)\rt), \quad \Lambda^\alpha u \in L^2\lt(0,T;H^s(\R^d)\rt),
\]
and the limit satisfies the estimate of Proposition
\ref{prop:uni-apri-low}.

We finally verify continuity at the top Sobolev level. The time derivative bounds obtained above imply
\[
(h,u) \in C\lt([0,T]; H^{s-1}(\R^d)\times H^{s-1}(\R^d) \rt).
\]
Together with the uniform $H^s$ bound, this gives
\[
(h,u) \in C_w\lt([0,T]; H^s(\R^d)\times H^s(\R^d) \rt).
\]
Interpolation also yields
\[
(h,u) \in C\lt([0,T]; H^{s-\sigma}(\R^d)\times H^{s-\sigma}(\R^d) \rt)
\]
for every $0<\sigma<1$.

Since
\[
h_{0,\delta}\to h_0, \quad u_{0,\delta}\to u_0 \quad\mbox{strongly in }  H^s(\R^d),
\]
the weak formulation and the weak $H^s$ continuity identify
\[
(h,u)(0)=(h_0,u_0).
\]
To upgrade the weak continuity to strong $H^s$ continuity, consider
\[
\calE_{\rm b}(t) := \frac{\gamma}{2\e^2} \intr (1+h)^{\gamma-3} \lt( |h|^2+|\Lambda^s h|^2 \rt) \dx  +\frac12\lt( \|u\|_{L^2}^2 + \|\Lambda^s u\|_{L^2}^2 \rt).
\]
Repeating the basic-energy calculation after a spatial Friedrichs regularization gives
\[
\frac\rd\dt\calE_{\rm b} = F
\]
for some $F\in L^1(0,T)$. The integrability of $F$ follows from the estimates in Sections \ref{ssec:low-zero} and \ref{ssec:low-h-ene} and Proposition \ref{prop:uni-apri-low}. Consequently,
\[
\calE_{\rm b} \in W^{1,1}(0,T) \subset C([0,T]).
\]
Let $t_n\to t$. The strong continuity in $H^{s-\sigma}$, with $s-\sigma>\frac d2$, gives
\[
(1+h(t_n))^{\gamma-3} \to (1+h(t))^{\gamma-3} \quad\mbox{in } L^\infty(\R^d).
\]
On the other hand,
\[
h(t_n)\rightharpoonup h(t), \quad u(t_n)\rightharpoonup u(t) \quad\mbox{weakly in }  H^s(\R^d).
\]
Weak lower semicontinuity applies separately to the weighted top-order density term and the top-order velocity term. Since their sum converges by the continuity of $\calE_{\rm b}$ and the lower-order terms converge strongly, both top-order norms converge. Weak convergence together with convergence of these norms gives
\[
(h,u) \in C\lt([0,T];H^s(\R^d)\times H^s(\R^d)\rt).
\]
Lemma \ref{lem:stab} gives uniqueness and completes the construction for $0<\alpha\le\frac12$.

%
%
%
%
%
%
 
\subsubsection{Higher-order regime}

Suppose that $\frac12<\alpha<1$. The critical Besov local theory for the Euler--alignment system with pressure applies; see \cite[Theorem 4.2]{BMTX24}. Its fractional order is $2\alpha\in(1,2)$ in the present notation. For each fixed $\e>0$, set $P_\e(\rho):=\e^{-2}\rho^\gamma$. The density variable used in \cite{BMTX24} is obtained by a smooth change of variables from $h/\e$. On the range $\frac12\le\rho\le\frac32$, Sobolev composition estimates show that its $H^s$ norm is equivalent to $\|h\|_{H^s}/\e$, with constants independent of $\e$. Since $s>\frac d2$ and $\frac d2+1-2\alpha<\frac d2$, the $H^s$ regularity assumed here implies the critical Besov regularity required in \cite{BMTX24}. Moreover, the acoustic coefficient enters the symmetrized system through the skew-symmetric coupling between the density and velocity variables, so the corresponding smallness threshold is independent of $\e$.

Applying the local theory to smooth approximations and using Proposition \ref{prop:uni-apri-high}, we obtain uniform $H^s$ bounds on the existence interval. Compactness preserves the required regularity, and uniqueness in the critical Besov class identifies the limit. The energy-continuity argument above then gives
\[
(h,u)\in C\lt([0,T];H^s(\R^d)\times H^s(\R^d)\rt).
\]
This proves the existence and uniqueness assertions of Theorem \ref{thm:loc} in the higher-order regime.

The constructions above are invariant under time translation. For each fixed $\e>0$, a solution in the small-data class can be restarted from any time $t_0$ as long as
\[
\frac12\le1+h(t_0,x)\le\frac32
\quad\text{and}\quad
\frac{\|h(t_0)\|_{H^s}}{\e}+\|u(t_0)\|_{H^s}<\eta_\star.
\]
The lifespan of the restarted solution depends only on these bounds and the distance of the density from zero. If the maximal time were finite while both bounds remained strict, one could restart the solution at a time sufficiently close to the maximal time and extend it further. This proves the continuation assertion of Theorem \ref{thm:loc}.

%
%
%
%
%
%

\subsection{Global continuation}

We now complete the proof of Theorem \ref{thm:global}. Let $(h,u)$ be the local strong solution to \eqref{h-u-system} on its maximal interval of existence $[0,T_*)$. We prove that the solution norm
remains uniformly small on $[0,T_*)$. The a priori estimate used below is Proposition \ref{prop:uni-apri-low} in the regime $0<\alpha\le \frac12$, and Proposition \ref{prop:uni-apri-high} in the regime $\frac12<\alpha<1$. In both cases, we write the corresponding estimate in the form
\bq\label{uni-apri-comb}
\frac{\|h(t)\|_{H^s}^2}{\e^2} +\|u(t)\|_{H^s}^2 +\int_0^t \lt( \frac{\|\nabla h (\tau)\|_{H^{\sigma_\alpha}}^2}{\e^2} +\|\Lambda^\alpha u (\tau)\|_{H^s}^2 \rt) \rd\tau \le C\lt( \frac{\|h_0\|_{H^s}^2}{\e^2} +\|u_0\|_{H^s}^2 \rt),
\eq
where
\[
\sigma_\alpha =
\begin{cases}
s+\alpha-1, & 0<\alpha\le \frac12,\\[1mm]
s-\alpha, & \frac12<\alpha<1.
\end{cases}
\]
Set
\[
T_\eta := \sup\lt\{ T\in(0,T_*)\,:\,\sup_{0\le t\le T} \lt( \frac{\|h(t)\|_{H^s}}{\e} +\|u(t)\|_{H^s} \rt) \le \eta \rt\}.
\]
For $\eta>0$ fixed sufficiently small, the smallness of the initial data gives $T_\eta>0$. By Propositions \ref{prop:uni-apri-low} or \ref{prop:uni-apri-high}, according to the value of $\alpha$, for every $T<T_\eta$ and every $t\in[0,T]$, we have
\[
\frac{\|h(t)\|_{H^s}^2}{\e^2} +\|u(t)\|_{H^s}^2 \le C\lt( \frac{\|h_0\|_{H^s}^2}{\e^2} +\|u_0\|_{H^s}^2 \rt).
\]
Choosing the initial size sufficiently small, we make the right-hand side less than $\frac{\eta^2}8$. Hence, we have
\[
\sup_{0\le t\le T} \lt( \frac{\|h(t)\|_{H^s}}{\e} +\|u(t)\|_{H^s} \rt) \le \frac{\eta}{2}
\]
for every $T<T_\eta$. By continuity, this improves the bootstrap assumption and hence implies $T_\eta=T_*$. Consequently,
\[
\sup_{0\le t<T_*} \lt( \frac{\|h(t)\|_{H^s}}{\e} + \|u(t)\|_{H^s} \rt) \le \frac{\eta}{2}.
\]
Since $s>\frac d2$ and $0<\e\le1$, Sobolev embedding gives
\[
\sup_{0\le t<T_*} \|h(t)\|_{L^\infty} \le C\sup_{0\le t<T_*}\|h(t)\|_{H^s} \le C\e\eta.
\]
After reducing $\eta$ if necessary, we obtain
\[
\frac12 \le 1+h(t,x) \le \frac32, \quad 0\le t<T_*, \quad x\in\R^d.
\]
Both conditions in the continuation criterion remain strict. Hence $T_*=\infty$. The estimate \eqref{main_uniform_est} follows from \eqref{uni-apri-comb}. This proves Theorem \ref{thm:global}.

%
%
%
%
%
%
\section{Low Mach number limit}\label{sec:limit}

In this section, we prove Theorem \ref{thm:incomp-limit}. We first treat ill-prepared initial data by combining acoustic dispersion with local compactness. We then consider well-prepared data and establish convergence to a prescribed sufficiently regular incompressible solution by a relative-energy argument.

%
%
%
%
%
%
\subsection{Ill-prepared data: acoustic decay and compactness}

We first prove the compactness assertion for ill-prepared initial data. The main ingredient is a frequency-localized decay estimate for the acoustic component. This estimate follows from wave Strichartz estimates; compare \cite[Proposition 4.1]{Dan05} and \cite[Lemma 4.2]{CHH26p}.

For a Fourier multiplier $m(D)$, we use the convention
\[
D:=-i\nabla, \quad \widehat{m(D)f}(\xi) = m(\xi)\widehat f(\xi).
\]
We also recall that the Leray projections are given by
\[
\bbq := \nabla\Delta^{-1}\nabla\cdot = -\nabla\Lambda^{-2}\nabla\cdot, \quad \bbp := I-\bbq.
\]
In particular,
\[
\nabla\cdot\bbp v=0
\]
and $\bbq v$ is the gradient component of $v$.

\begin{lemma}\label{lem:acoustic-decay}
Let $c>0$, $T>0$, and suppose that $(a^\e,w^\e) \in C([0,T];L^2(\R^d)\times L^2(\R^d))$ satisfies
\bq\label{eq:abstract-acoustic}
\begin{aligned}
&\pa_t a^\e+\frac{c}{\e}\nabla\cdot w^\e=F^\e,\\
&\pa_t w^\e+\frac{c}{\e}\nabla a^\e=G^\e,\quad \bbq w^\e=w^\e.
\end{aligned}
\eq
Assume that
\[
\sup_{0<\e\le1} \|(a^\e,w^\e)\|_{L^\infty(0,T;L^2)} <\infty.
\]
Let $\chi\in C_c^\infty(\R^d\setminus\{0\})$. If
\[
\sup_{0<\e\le1} \lt( \|\chi(D)F^\e\|_{L^1(0,T;L^2)} + \|\chi(D)G^\e\|_{L^1(0,T;L^2)} \rt) <\infty,
\]
then, for every compact set $K\Subset\R^d$,
\[
\chi(D)a^\e\to0, \quad \chi(D)w^\e\to0
\]
strongly in $L^2(0,T;L^2(K))$ as $\e\to0$.
\end{lemma}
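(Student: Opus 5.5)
The plan is to reduce the acoustic system to a scalar wave-type evolution and apply frequency-localized Strichartz (dispersive) estimates with a time rescaled by $1/\e$. Since $w^\e=\bbq w^\e$ is a gradient, I set $b^\e:=\Lambda^{-1}\nabla\cdot w^\e$. Then $w^\e=-\nabla\Lambda^{-1}b^\e$, and $\|w^\e\|_{L^2}=\|b^\e\|_{L^2}$. The system becomes
\[
\pa_t a^\e+\frac{c}{\e}\Lambda b^\e=F^\e,\quad \pa_t b^\e-\frac{c}{\e}\Lambda a^\e=\Lambda^{-1}\nabla\cdot G^\e .
\]
With $z^\e_\pm:=a^\e\pm i b^\e$ this diagonalizes to
\[
\pa_t z^\e_\pm \mp \frac{ic}{\e}\Lambda z^\e_\pm=F^\e\pm i\Lambda^{-1}\nabla\cdot G^\e ,
\]
that is, half-wave equations with propagator $e^{\pm i c t\Lambda/\e}$. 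Applying $\chi(D)$, which commutes with everything, Duhamel's formula gives
\[
\chi(D)z^\e_\pm(t)=e^{\pm ict\Lambda/\e}\chi(D)z^\e_\pm(0)+\int_0^t e^{\pm i c(t-\tau)\Lambda/\e}\chi(D)H^\e_\pm(\tau)\,\rd\tau .
\]
Here $\|\chi(D)H^\e_\pm\|_{L^1(0,T;L^2)}$ is bounded uniformly in $\e$, because $\Lambda^{-1}\nabla$ is a bounded multiplier.

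The key step is the Strichartz estimate for frequency-localized half-waves. Let $\tilde\chi\in C_c^\infty(\R^d\setminus\{0\})$ equal $1$ on the support of $\chi$. For $d\ge2$ there is an admissible pair $(q,r)$ with $2<q<\infty$ and $2<r<\infty$, for instance $r$ slightly above $2$ and $\frac2q=(d-1)(\frac12-\frac1r)$. For data localized in the annulus where $\tilde\chi$ lives, the standard estimate is
\[
\|e^{is\Lambda}\tilde\chi(D)f\|_{L^q_s(\R;L^r)}\le C\|f\|_{L^2}.
\]
Rescaling time by $s=ct/\e$ yields
\[
\|e^{ict\Lambda/\e}\tilde\chi(D)f\|_{L^q_t(0,T;L^r)}\le C\e^{1/q}\|f\|_{L^2}.
\]
Applying this with Minkowski's inequality to the Duhamel integral (the inhomogeneous term is handled by the same bound applied to each $\tau$-slice of the $L^1_tL^2$ source), I get
\[
\|\chi(D)z^\e_\pm\|_{L^q(0,T;L^r)}\le C\e^{1/q}\Big(\|z^\e_\pm(0)\|_{L^2}+\|\chi(D)H^\e_\pm\|_{L^1(0,T;L^2)}\Big)\to0 .
\]
This uses the uniform $L^\infty_tL^2$ bound only at $t=0$, plus the $L^1L^2$ bound on the forcing. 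If continuity only gives boundedness at $t=0$ via $\sup_t$, that suffices.

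To conclude on a compact $K$, Hölder's inequality gives $\|f\|_{L^2(K)}\le |K|^{\frac12-\frac1r}\|f\|_{L^r}$. Hence $\chi(D)z^\e_\pm\to0$ in $L^q(0,T;L^2(K))$, and therefore in $L^1(0,T;L^2(K))$. I interpolate this with the uniform $L^\infty(0,T;L^2)$ bound on $\chi(D)z^\e_\pm$, which follows from the assumption since $\chi(D)$ is $L^2$-bounded, to get convergence in $L^2(0,T;L^2(K))$. Alternatively, $q>2$ gives it directly. Recovering $a^\e=\frac12(z_++z_-)$ and $b^\e=\frac1{2i}(z_+-z_-)$ shows that $\chi(D)a^\e\to0$ and $\chi(D)b^\e\to0$. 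For $w^\e$, I write $\chi(D)w^\e=-\nabla\Lambda^{-1}\tilde\chi(D)\chi(D)b^\e$, where $\nabla\Lambda^{-1}\tilde\chi(D)$ has a Schwartz kernel and so is bounded on $L^r$. The $L^q L^r$ smallness therefore transfers to $w^\e$ before localizing to $K$; I localize only after applying the multiplier, which avoids any nonlocal issue.

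The main obstacle is justifying the Strichartz bound for the localized half-wave propagator, including the inhomogeneous part, with an explicit admissible pair in every $d\ge2$. Rather than proving it, I would invoke it as known (cf. \cite[Proposition 4.1]{Dan05}). The underlying dispersive estimate is
\[
\|e^{is\Lambda}\tilde\chi(D)f\|_{L^\infty}\le C(1+|s|)^{-\frac{d-1}2}\|f\|_{L^1},
\]
obtained by stationary phase on the annulus, and the $TT^*$ argument of Keel--Tao then gives the homogeneous estimate. The inhomogeneous $L^1L^2$ case follows by Minkowski. The non-endpoint requirement $q>2$ holds trivially by choosing $r$ close to $2$, which keeps the argument valid even in $d=2$.
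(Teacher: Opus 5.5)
Your proposal is correct and follows the paper's proof essentially step for step. Both arguments use the same reduction $b^\e=\Lambda^{-1}\nabla\cdot w^\e$, the same half-wave diagonalization $z^\e_\pm=a^\e\pm i b^\e$, time-rescaled frequency-localized Strichartz estimates giving an $\e^{1/q}$ gain, Duhamel's formula with Minkowski's inequality, and H\"older's inequality on $K$. The only differences are cosmetic: the paper uses the diagonal admissible pair $(p_d,p_d)$ with $p_d=\frac{2(d+1)}{d-1}$ (Sobolev index $\frac12$, harmless on the annulus) and the $L^{p_d}$-boundedness of the Riesz transform $\nabla\Lambda^{-1}$, whereas you take $r$ near $2$ and bound the multiplier $\nabla\Lambda^{-1}\tilde\chi(D)$ through its Schwartz kernel.
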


\begin{proof}
Set $d^\e := \Lambda^{-1}\nabla\cdot w^\e$. Then $\nabla\cdot w^\e = \Lambda d^\e$. Since $\bbq w^\e=w^\e$ and $\bbq = -\nabla\Lambda^{-2}\nabla\cdot$, we have
\[
w^\e = -\nabla\Lambda^{-2}\nabla\cdot w^\e = -\nabla\Lambda^{-1}d^\e.
\]
In particular, Plancherel's theorem gives $\|w^\e(t)\|_{L^2} = \|d^\e(t)\|_{L^2}$. 

Using $\nabla\cdot w^\e = \Lambda d^\e$, the first equation in \eqref{eq:abstract-acoustic} becomes
\[
\pa_t a^\e + \frac{c}{\e}\Lambda d^\e = F^\e.
\]
Applying $\Lambda^{-1}\nabla\cdot$ to the second equation and using $\Lambda^{-1}\nabla\cdot\nabla a^\e = \Lambda^{-1}\Delta a^\e = -\Lambda a^\e$, we obtain
\[
\pa_t d^\e - \frac{c}{\e}\Lambda a^\e = \Lambda^{-1}\nabla\cdot G^\e.
\]
Setting $z_\pm^\e := a^\e\pm i d^\e$, we find
\[
\pa_t z_\pm^\e \mp i\frac{c}{\e}\Lambda z_\pm^\e = F^\e \pm i\Lambda^{-1}\nabla\cdot G^\e.
\]
Let
\[
p_d:=\frac{2(d+1)}{d-1}.
\]
The pair $(p_d,p_d)$ is wave-admissible and its corresponding Sobolev index is $\frac12$. Since $\chi$ is supported in a fixed compact subset of $\R^d\setminus\{0\}$, the $\dot H^{1/2}$ and $L^2$ norms are equivalent on the range of $\chi(D)$. Thus, the standard wave Strichartz estimate \cite{KT98}, together with the change of variables $t=\e\tau/c$, gives
\bq\label{eq:scaled-strichartz}
\|\chi(D)e^{\pm ict\Lambda/\e}f\|_{L^{p_d}((0,T)\times\R^d)} \le C_{\chi,c}\e^{1/p_d}\|f\|_{L^2}.
\eq
By Duhamel's formula, Minkowski's inequality, and \eqref{eq:scaled-strichartz},

\[
\|\chi(D)z_\pm^\e\|_{L^{p_d}((0,T)\times\R^d)}  \le C_{\chi,c}\e^{1/p_d} \|\chi(D)z_\pm^\e(0)\|_{L^2}  + C_{\chi,c}\e^{1/p_d} \int_0^T \| \chi(D) \lt( F^\e \pm i\Lambda^{-1}\nabla\cdot G^\e \rt)(\tau) \|_{L^2}\, \rd\tau.
\]
Since $\Lambda^{-1}\nabla\cdot$ is an $L^2$-bounded Fourier multiplier and commutes with $\chi(D)$, the assumptions of the lemma imply that the right-hand side tends to zero as $\e\to0$. Hence
\[
\chi(D)z_\pm^\e \to0 \quad\mbox{strongly in } L^{p_d}((0,T)\times\R^d).
\]
Since
\[
a^\e=\frac{z_+^\e+z_-^\e}{2}, \quad d^\e=\frac{z_+^\e-z_-^\e}{2i},
\]
we obtain
\[
\chi(D)a^\e\to0, \quad \chi(D)d^\e\to0
\]
strongly in $L^{p_d}((0,T)\times\R^d)$. Moreover,
\[
\chi(D)w^\e = -\nabla\Lambda^{-1}\chi(D)d^\e.
\]
Since $\nabla\Lambda^{-1}$ is bounded on $L^{p_d}$, it follows that
\[
\chi(D)w^\e \to0 \quad\mbox{strongly in } L^{p_d}((0,T)\times\R^d).
\]
Finally, since $p_d>2$, H\"older's inequality gives, for every compact set $K\Subset\R^d$,
\[
\|\chi(D)a^\e\|_{L^2(0,T;L^2(K))} + \|\chi(D)w^\e\|_{L^2(0,T;L^2(K))} \to0.
\]
This completes the proof.
\end{proof}

We now apply Lemma \ref{lem:acoustic-decay} to the acoustic component of the Euler--alignment system. Set
\[
r^\e:=\frac{\rho^\e-1}{\e} = \frac{h^\e}{\e}, \quad a^\e:=\sqrt{\gamma}\,r^\e, \quad w^\e:=\bbq u^\e.
\]
By Theorem \ref{thm:global},
\bq\label{eq:uniform-acoustic}
\sup_{0<\e\le1}\sup_{0\le t\le T} \lt( \|r^\e(t)\|_{H^s} + \|u^\e(t)\|_{H^s} \rt) \le C_T.
\eq
The continuity equation gives
\[
\pa_t a^\e + \frac{\sqrt{\gamma}}{\e}\nabla\cdot w^\e = -\nabla\cdot(a^\e u^\e).
\]
For the velocity equation, we write
\[
\frac{\gamma}{\e} (1+\e r^\e)^{\gamma-2}\nabla r^\e = \frac{\sqrt{\gamma}}{\e}\nabla a^\e + \gamma b^\e(r^\e)\nabla r^\e, \quad b^\e(z) := \frac{(1+\e z)^{\gamma-2}-1}{\e}.
\]
Since $\e r^\e=h^\e$ is uniformly small in $L^\infty$, the algebra property of $H^s$, together with the Moser estimate \eqref{eq:moser_den}, gives
\bq\label{eq:beps-uniform}
\|b^\e(r^\e)\|_{H^s} \le C\|r^\e\|_{H^s},
\eq
where $C>0$ is independent of $\e$.

Since $\bbq$ commutes with Fourier multipliers and acts as the
identity on gradient fields,
\[
\bbq\nabla a^\e = \nabla a^\e, \quad \bbq\Lambda^{2\alpha}u^\e = \Lambda^{2\alpha}w^\e.
\]
Applying $\bbq$ to the velocity equation, we obtain
\[
\pa_t w^\e + \frac{\sqrt{\gamma}}{\e}\nabla a^\e = G^\e,
\]
where
\[
G^\e := -\bbq(u^\e\cdot\nabla u^\e) -\Lambda^{2\alpha}w^\e -\gamma\bbq\lt(b^\e(r^\e)\nabla r^\e\rt) +\e\bbq\calR^\e, \quad \calR^\e := -\Lambda^{2\alpha}(r^\e u^\e) + u^\e\Lambda^{2\alpha}r^\e.
\]
Here we estimate
\bq\label{eq:alignment-rem-neg}
\|\calR^\e\|_{H^{-\alpha}}
\le
C\|r^\e\|_{H^s}\|u^\e\|_{H^s}.
\eq
due to $s>\frac d2$ and $s>\alpha$.

Let $\chi\in C_c^\infty(\R^d\setminus\{0\})$. Since $\chi$ is supported in a fixed annulus, Fourier multipliers of finite order are bounded on the range of $\chi(D)$. In particular,
\[
\|\chi(D)\Lambda^{2\alpha}f\|_{L^2} \le C_\chi\|f\|_{L^2}, \quad \|\chi(D)f\|_{L^2} \le C_\chi\|f\|_{H^{-\alpha}}.
\]
Moreover, since $s>\frac d2$ and $d\ge2$, we have $s>1$. Thus, by \eqref{eq:uniform-acoustic}, \eqref{eq:beps-uniform}, and \eqref{eq:alignment-rem-neg}, we obtain
\[
\|\chi(D)\nabla\cdot(a^\e u^\e)\|_{L^2} \le C_\chi\|a^\e u^\e\|_{L^2} \le C_\chi\|a^\e\|_{L^\infty} \|u^\e\|_{L^2} \le C_\chi,
\]
and
\[
\|\chi(D)G^\e\|_{L^2} \le C_\chi\lt( \|u^\e\|_{L^\infty} \|\nabla u^\e\|_{L^2} + \|w^\e\|_{L^2} + \|b^\e(r^\e)\|_{L^\infty} \|\nabla r^\e\|_{L^2} + \e\|\calR^\e\|_{H^{-\alpha}} \rt) \le C_\chi.
\]
Consequently,
\[
\sup_{0<\e\le1} \lt( \|\chi(D)\nabla\cdot(a^\e u^\e)\|_{L^1(0,T;L^2)} + \|\chi(D)G^\e\|_{L^1(0,T;L^2)} \rt) \le C_{\chi,T}.
\]
Thus, Lemma \ref{lem:acoustic-decay} applies and gives
\bq\label{eq:middle-acoustic-decay}
\chi(D)w^\e \to0 \quad\mbox{strongly in } L^2(0,T;L^2(K))
\eq
for every compact set $K\Subset\R^d$.

It remains to remove the frequency localization. Let $0<\eta<1<R$ and choose a radial function $\varphi\in C_c^\infty(\R^d)$ such that
\[
\varphi(\xi)=1 \quad\mbox{for }|\xi|\le1, \quad \varphi(\xi)=0 \quad\mbox{for }|\xi|\ge2.
\]
Define
\[
P_{\le\eta} := \varphi(D/\eta), \quad P_{\ge R} := I-\varphi(D/R), \quad  \chi_{\eta,R}(\xi) := \varphi(\xi/R)-\varphi(\xi/\eta).
\]
Then
\[
\chi_{\eta,R} \in C_c^\infty(\R^d\setminus\{0\})
\quad \text{and} \quad
I = P_{\le\eta} + \chi_{\eta,R}(D) + P_{\ge R}.
\]
Consequently,
\[
w^\e = P_{\le\eta}w^\e + \chi_{\eta,R}(D)w^\e + P_{\ge R}w^\e.
\]
By Bernstein's inequality and \eqref{eq:uniform-acoustic}, we obtain
\[
\|P_{\le\eta}w^\e\|_{L^2(0,T;L^2(K))} \le C_{K,T}\eta^{d/2} \|w^\e\|_{L^\infty(0,T;L^2)} \le C_{K,T}\eta^{d/2}.
\]
On the other hand, we find
\[
\|P_{\ge R}w^\e\|_{L^2(0,T;L^2)} \le C_TR^{-s} \|w^\e\|_{L^\infty(0,T;H^s)} \le C_TR^{-s}.
\]
For fixed $\eta$ and $R$, \eqref{eq:middle-acoustic-decay} gives
\[
\chi_{\eta,R}(D)w^\e \to0 \quad\mbox{strongly in } L^2(0,T;L^2(K)).
\]
This yields
\[
\limsup_{\e\to0} \|w^\e\|_{L^2(0,T;L^2(K))} \le C_{K,T}\lt(\eta^{d/2}+R^{-s}\rt).
\]
Letting first $\eta\to0$ and then $R\to\infty$, we obtain
\bq\label{eq:Q-local-L2}
\bbq u^\e=w^\e \to0 \quad\mbox{strongly in } L^2(0,T;L^2_{loc}(\R^d)).
\eq
We next treat the incompressible component. Since
\[
(1+h^\e)^{\gamma-2}\nabla h^\e = \nabla \lt( \frac{(1+h^\e)^{\gamma-1}-1}{\gamma-1} \rt),
\]
the pressure term vanishes after applying the Leray projection $\bbp$. Since $\bbp$ commutes with $\Lambda^{2\alpha}$, we obtain
\[
\pa_t\bbp u^\e = -\bbp(u^\e\cdot\nabla u^\e) -\Lambda^{2\alpha}\bbp u^\e +\e\bbp\calR^\e.
\]
Since $s>\frac d2$ and $d\ge2$, we have $s>1$. Hence,
\[
\|u^\e\cdot\nabla u^\e\|_{L^2} \le \|u^\e\|_{L^\infty} \|\nabla u^\e\|_{L^2} \le C.
\]
Moreover, since $0<\alpha<1<s$, we get
\[
\|\Lambda^{2\alpha}\bbp u^\e\|_{H^{-\alpha}} \le C\|u^\e\|_{H^s},
\]
while \eqref{eq:alignment-rem-neg} gives $\|\e\bbp\calR^\e\|_{H^{-\alpha}} \le C\e$. Hence, we have
\bq\label{eq:P-time-compact}
\sup_{0<\e\le1} \|\pa_t\bbp u^\e\|_{L^\infty(0,T;H^{-\alpha})} \le C_T.
\eq

Let $\Omega\Subset\R^d$ be a bounded smooth domain. For every $0<\delta<s$, $H^s(\Omega) \Subset H^{s-\delta}(\Omega) \hookrightarrow H^{-\alpha}(\Omega)$. Thus, \eqref{eq:P-time-compact}, together with the uniform bound of $\bbp u^\e$ in $L^\infty(0,T;H^s)$, allows us to apply the Aubin--Lions compactness lemma on $\Omega$. Using a diagonal argument over a smooth exhaustion of $\R^d$ and over a sequence $\delta_m\downarrow0$, we obtain, up to a subsequence,
\bq\label{eq:P-local-compact}
\bbp u^\e \to u \quad\mbox{strongly in } L^2(0,T;H^{s-\delta}_{loc}(\R^d))
\eq
for every $0<\delta<s$.

We now return to the acoustic component. Let $\Omega\Subset\Omega_1\Subset\R^d$ be bounded smooth domains and choose $\zeta\in C_c^\infty(\Omega_1)$ such that $\zeta=1$ on $\Omega$. By Sobolev interpolation,
\[
\|\bbq u^\e\|_{H^{s-\delta}(\Omega)} \le C \|\zeta\bbq u^\e\|_{L^2}^{\delta/s} \|\zeta\bbq u^\e\|_{H^s}^{1-\delta/s}.
\]
The second factor is uniformly bounded by \eqref{eq:uniform-acoustic}, whereas the first factor converges to zero in $L^2(0,T)$ by \eqref{eq:Q-local-L2}. Hence, after integration in time and an application of H\"older's inequality,
\[
\bbq u^\e \to0 \quad\mbox{strongly in } L^2(0,T;H^{s-\delta}_{loc}(\R^d))
\]
for every $0<\delta<s$. Combining this with \eqref{eq:P-local-compact}, we conclude that
\bq\label{eq:u-local-compact}
u^\e \to u \quad\mbox{strongly in } L^2(0,T;H^{s-\delta}_{loc}(\R^d))
\eq
for every $0<\delta<s$. Since $\bbp u^\e$ is divergence-free, \eqref{eq:P-local-compact} also yields
\[
\nabla\cdot u=0
\]
in the sense of distributions.

We also record the regularity inherited by the limit. By \eqref{main_uniform_est},
\[
\sup_{0<\e\le1} \|u^\e\|_{L^\infty(0,T;H^s)} \le C_T \quad \text{and} \quad \sup_{0<\e\le1} \|\Lambda^\alpha u^\e\|_{L^2(0,T;H^s)} \le C_T.
\]
Since
\[
\|f\|_{H^{s+\alpha}}^2 \le C\lt( \|f\|_{H^s}^2 + \|\Lambda^\alpha f\|_{H^s}^2 \rt),
\]
we obtain
\[
\sup_{0<\e\le1} \|u^\e\|_{L^2(0,T;H^{s+\alpha})} \le C_T.
\]
Thus, after passing to a further subsequence if necessary,
\[
u^\e\rightharpoonup^\ast \widetilde u \quad\mbox{in } L^\infty(0,T;H^s(\R^d))
\quad \text{and} \quad 
u^\e\rightharpoonup \widetilde u \quad\mbox{in } L^2(0,T;H^{s+\alpha}(\R^d)).
\]
The local strong convergence \eqref{eq:u-local-compact} identifies $\widetilde u$ with $u$. Consequently,
\[
u\in L^\infty(0,T;H^s(\R^d)) \cap L^2(0,T;H^{s+\alpha}(\R^d)),
\]
and, in particular,
\bq\label{eq:weak-frac-limit}
\Lambda^\alpha u^\e \rightharpoonup \Lambda^\alpha u \quad\mbox{weakly in } L^2(0,T;L^2(\R^d)).
\eq
We finally identify the limiting equation. By \eqref{main_uniform_est},
\[
\|\rho^\e-1\|_{L^\infty(0,T;H^s)} \le C\e,
\]
and hence
\bq\label{eq:rho-strong-limit}
\rho^\e -1 \to 0 \quad\mbox{strongly in } L^\infty(0,T;H^s(\R^d)).
\eq
After extracting a further subsequence if necessary, we may also assume that
\[
u_0^\e \rightharpoonup u_0 \quad\mbox{weakly in } H^s(\R^d).
\]

Let $\psi\in C_c^\infty([0,T)\times\R^d;\R^d)$,  $\nabla\cdot\psi=0$. Testing the momentum equation in \eqref{main_eq_e} against $\psi$, the singular pressure term vanishes identically. Using the symmetry of the alignment force, we obtain
\begin{align*}
0 &= \int_0^T\intr \rho^\e u^\e\cdot\pa_t\psi \,\dx\dt + \int_0^T\intr \rho^\e u^\e\otimes u^\e:\nabla\psi \,\dx\dt  + \intr \rho_0^\e u_0^\e\cdot\psi(0) \,\dx\\
&\quad -\frac12 \int_0^T\intor \phi(x-y) \rho^\e(x)\rho^\e(y) \lt(u^\e(x)-u^\e(y)\rt) \cdot \lt(\psi(x)-\psi(y)\rt) \dx\dy\dt.
\end{align*}
By \eqref{eq:u-local-compact} and \eqref{eq:rho-strong-limit}, we get
\[
\rho^\e u^\e \to u \quad\mbox{strongly in } L^1_{loc}((0,T)\times\R^d)
\]
and
\[
\rho^\e u^\e\otimes u^\e \to u\otimes u \quad\mbox{strongly in } L^1_{loc}((0,T)\times\R^d).
\]
Indeed, the second convergence follows from
\[
\|u^\e\otimes u^\e-u\otimes u\|_{L^1((0,T)\times K)} \le \|u^\e-u\|_{L^2((0,T)\times K)} \lt( \|u^\e\|_{L^2((0,T)\times K)} + \|u\|_{L^2((0,T)\times K)} \rt)
\]
for every compact set $K\Subset\R^d$.

For the alignment term, \eqref{eq:rho-strong-limit} and the Sobolev embedding $H^s\hookrightarrow L^\infty$ give
\[
\|\rho^\e-1\|_{L^\infty((0,T)\times\R^d)} \le C\e.
\]
Since $\rho^\e$ is uniformly bounded in $L^\infty$, we also find
\[
\|\rho^\e(x)\rho^\e(y)-1\|_{L^\infty_{t,x,y}} \le C\e.
\]
Consequently, for almost every $t\in(0,T)$,
\[
\lt| \frac12\intor \phi(x-y) \lt( \rho^\e(x)\rho^\e(y)-1 \rt) \lt( u^\e(x)-u^\e(y) \rt) \cdot \lt( \psi(x)-\psi(y) \rt) \dx\dy \rt|  \le C\e \|\Lambda^\alpha u^\e\|_{L^2} \|\Lambda^\alpha\psi\|_{L^2}.
\]
Hence, we obtain
\begin{align*}
&\int_0^T \lt| \frac12\intor \phi(x-y) \lt( \rho^\e(x)\rho^\e(y)-1 \rt) \lt( u^\e(x)-u^\e(y) \rt) \cdot \lt( \psi(x)-\psi(y) \rt) \dx\dy \rt| \dt\\
&\quad \le C\e \|\Lambda^\alpha u^\e\|_{L^2(0,T;L^2)} \|\Lambda^\alpha\psi\|_{L^2(0,T;L^2)} \to0.
\end{align*}
Using the normalization of $\phi$ and \eqref{eq:weak-frac-limit}, we also have
\[
-\frac12 \int_0^T\intor \phi(x-y) \lt( u^\e(x)-u^\e(y) \rt) \cdot \lt( \psi(x)-\psi(y) \rt) \dx\dy\dt \to -\int_0^T\intr \Lambda^\alpha u\cdot\Lambda^\alpha\psi \,\dx\dt.
\]
Thus the full alignment contribution converges to
\[
-\int_0^T\intr \Lambda^\alpha u\cdot\Lambda^\alpha\psi \,\dx\dt.
\]
For the initial term, by the uniform assumptions on the initial data,
\[
\|\rho_0^\e-1\|_{L^\infty} \le C\e.
\]
Hence
\[
\lt| \intr (\rho_0^\e-1)u_0^\e\cdot\psi(0) \,\dx \rt| \le \|\rho_0^\e-1\|_{L^\infty} \|u_0^\e\|_{L^2} \|\psi(0)\|_{L^2} \le C\e.
\]
Together with $u_0^\e\rightharpoonup u_0$ weakly in $H^s(\R^d)$, this yields
\[
\intr \rho_0^\e u_0^\e\cdot\psi(0) \,\dx \to \intr u_0\cdot\psi(0) \,\dx.
\]
Passing to the limit in the weak formulation, we obtain
\[
\int_0^T\intr \lt( u\cdot\pa_t\psi + u\otimes u:\nabla\psi - \Lambda^\alpha u\cdot\Lambda^\alpha\psi \rt) \dx\dt + \intr u_0\cdot\psi(0) \,\dx =0.
\]
Since $\psi$ is divergence-free,
\[
\intr u_0\cdot\psi(0) \,\dx = \intr \bbp u_0\cdot\psi(0) \,\dx.
\]
Thus the initial datum of the limiting incompressible equation is $\bbp u_0$. Moreover, by the de Rham theorem, there exists a distribution $\pi$ such that
\[
\pa_t u + u\cdot\nabla u + \nabla\pi + \Lambda^{2\alpha}u =0, \quad \nabla\cdot u=0
\]
in the sense of distributions. This proves the first part of Theorem \ref{thm:incomp-limit}.

%
%
%
%
%
%
\subsection{Well-prepared data: relative-energy convergence}

We now prove the second part of Theorem \ref{thm:incomp-limit}. Let $v$ be the sufficiently regular solution to the fractional incompressible Navier--Stokes system \eqref{eq:frac_NS} on $[0,T]$ appearing in the theorem. Let $(\rho^\e,u^\e)$ be the global strong solution to \eqref{main_eq_e} given by Theorem \ref{thm:global}, and set $h^\e=\rho^\e-1$. Then Theorem \ref{thm:global}, in particular \eqref{main_uniform_est}, gives, uniformly in $\e$,
\[
\sup_{0\le t\le T}\lt( \frac{\|h^\e(t)\|_{H^s}}{\e} +\|u^\e(t)\|_{H^s} \rt)\le C.
\]
Choose $0<\delta_0<s-\frac d2$. Then
\bq\label{rho-H-delta}
\|\rho^\e-1\|_{L^\infty(0,T;H^{\frac d2+\delta_0})}
\le C\e.
\eq
Recall the relative energy
\[
\mathscr{H}^\e(t)= \frac12\intr \rho^\e |u^\e-v|^2\,\dx +\frac{1}{\e^2(\gamma-1)} \intr \lt[(\rho^\e)^\gamma-1-\gamma(\rho^\e-1)\rt] \dx.
\]
A direct computation using \eqref{main_eq_e} and \eqref{eq:frac_NS} gives
\begin{align*}
&\frac\rd\dt\mathscr{H}^\e(t) +\|\Lambda^\alpha(u^\e-v)\|_{L^2}^2 \\ 
&\quad = -\intr \rho^\e (u^\e-v)\otimes(u^\e-v):\nabla v\,\dx  -\intr h^\e (u^\e-v)\cdot\Lambda^{2\alpha}(u^\e-v)\,\dx\\
&\quad \quad +\intr \rho^\e(u^\e-v)\cdot\nabla \pi\,\dx  -\intr \rho^\e(u^\e-v)\cdot \lt( \Lambda^{2\alpha}(h^\e u^\e) -u^\e\Lambda^{2\alpha}h^\e \rt) \dx.
\end{align*}
We estimate the terms on the right-hand side. First,
\[
-\intr \rho^\e (u^\e-v)\otimes(u^\e-v):\nabla v\,\dx \le C\|\nabla v\|_{L^\infty}\intr \rho^\e |u^\e-v|^2\,\dx.
\]
Next, by the  self-adjointness of $\Lambda^\alpha$ and the fractional product  estimate \eqref{eq:frac_prod1}, we obtain
\begin{align*}
\lt| \intr h^\e (u^\e-v)\cdot\Lambda^{2\alpha}(u^\e-v)\,\dx\rt|  &\le C\|\Lambda^\alpha(u^\e-v)\|_{L^2} \|\Lambda^\alpha(h^\e(u^\e-v))\|_{L^2}\\
&\le C\|h^\e\|_{H^{\frac d2+\delta_0}} \|\Lambda^\alpha(u^\e-v)\|_{L^2}^2\\
&\le C\e\|\Lambda^\alpha(u^\e-v)\|_{L^2}^2.
\end{align*}
We again use the self-adjointness of $\Lambda^\alpha$ to have

\begin{align*}
&\lt|\intr \rho^\e(u^\e-v)\cdot\lt(\Lambda^{2\alpha}(h^\e u^\e)-u^\e\Lambda^{2\alpha}h^\e\rt)\dx\rt|\\
&\quad\le \|\Lambda^\alpha(\rho^\e(u^\e-v))\|_{L^2}\|\Lambda^\alpha(h^\e u^\e)\|_{L^2}
+\|\Lambda^\alpha(\rho^\e u^\e\cdot(u^\e-v))\|_{L^{\frac{1}{1-\frac\alpha d}}}\|\Lambda^\alpha h^\e\|_{L^\frac d\alpha}\\
&\quad \le C\lt( \|\Lambda^\alpha(u^\e-v)\|_{L^2}+ \|\Lambda^\alpha(h^\e(u^\e-v))\|_{L^2}\rt)\|\Lambda^\alpha(h^\e u^\e)\|_{L^2}\\
&\quad \quad+C\| h^\e\|_{H^s}\lt(\|\Lambda^\alpha(u^\e\cdot(u^\e-v))\|_{L^{\frac{1}{1-\frac\alpha d}}} + \|\Lambda^\alpha(h^\e u^\e\cdot(u^\e-v))\|_{L^{\frac{1}{1-\frac\alpha d}}}\rt)
\end{align*}
We use \eqref{eq:hu-low} to have
\[
\|\Lambda^\alpha(h^\e u^\e)\|_{L^2} \le C\|h^\e\|_{H^s} \|\Lambda^\alpha u^\e\|_{L^2} \le C\e,
\]
and
\[
\|\Lambda^\alpha(h^\e(u^\e-v))\|_{L^2} \le C\|h^\e\|_{H^s}\|\Lambda^\alpha (u^\e - v)\|_{L^2} \le C\e \|\Lambda^\alpha (u^\e - v)\|_{L^2}.
\]
Furthermore, the fractional product estimate \eqref{eq:frac_prod1}  implies
\[
\begin{aligned}
\|\Lambda^\alpha(u^\e\cdot(u^\e-v))\|_{L^{\frac{1}{1-\frac\alpha d}}} &\le C\lt(\|\Lambda^\alpha u^\e\|_{L^2} \|u^\e - v\|_{L^{\frac{1}{\frac12 -\frac\alpha d}}} + \|u^\e\|_{L^{\frac{1}{\frac12-\frac\alpha d}}} \|\Lambda^\alpha (u^\e - v)\|_{L^2}\rt)\\
&\le C\|\Lambda^\alpha u^\e\|_{L^2}\|\Lambda^\alpha (u^\e - v)\|_{L^2}\\
&\le C\|\Lambda^\alpha (u^\e - v)\|_{L^2},
\end{aligned}
\]
and similarly,
\[
\|\Lambda^\alpha(h^\e u^\e\cdot(u^\e-v))\|_{L^{\frac{1}{1-\frac\alpha d}}} \le C\|\Lambda^\alpha (h^\e u^\e) \|_{L^2} \|\Lambda^\alpha (u^\e - v)\|_{L^2} \le C\e \|\Lambda^\alpha (u^\e - v)\|_{L^2},
\]
where we also used \eqref{eq:hu-low}.

Hence, we use Young's inequality to obtain
\[
\lt|\intr \rho^\e(u^\e-v)\cdot\lt(\Lambda^{2\alpha}(h^\e u^\e)-u^\e\Lambda^{2\alpha}h^\e\rt)\dx\rt|  \le  \frac14\|\Lambda^\alpha (u^\e - v)\|_{L^2}^2 + C\e^2.
\]
For the pressure term, since $\nabla\cdot v=0$, we have

\begin{align*}
\intr \rho^\e(u^\e-v)\cdot\nabla \pi\,\dx &= -\intr \nabla\cdot(\rho^\e u^\e)\pi\,\dx -\intr (\rho^\e-1)v\cdot\nabla \pi\,\dx\\
&= \intr \pa_t(\rho^\e-1)\pi\,\dx -\intr (\rho^\e-1)v\cdot\nabla \pi\,\dx.
\end{align*}
Therefore, using Young's inequality and taking $\e>0$ sufficiently small, we obtain
\bq\label{relative-energy-diff}
\begin{aligned}
\frac\rd\dt\mathscr{H}^\e(t) +\frac12\|\Lambda^\alpha(u^\e-v)\|_{L^2}^2 &\le C\|\nabla v\|_{L^\infty}\intr \rho^\e |u^\e-v|^2\,\dx +C\e^2\\
&\quad +\intr \pa_t(\rho^\e-1)\pi\,\dx -\intr (\rho^\e-1)v\cdot\nabla \pi\,\dx.
\end{aligned}
\eq
Integrating the pressure contribution by parts in time gives
\[
\int_0^t\intr \pa_t(\rho^\e-1)\pi\,\dx \rd\tau = \intr (\rho^\e-1)\pi(t)\,\dx -\intr (\rho_0^\e-1)\pi_0\,\dx  -\int_0^t\intr (\rho^\e-1)\pa_t \pi\,\dx \rd\tau.
\]
Hence, by \eqref{rho-H-delta},
\[
\lt| \int_0^t\intr \pa_t(\rho^\e-1)\pi\,\dx \rd\tau -\int_0^t\intr (\rho^\e-1)v\cdot\nabla \pi\,\dx \rd\tau \rt| \le C\sup_{0\le \tau\le t}\|\rho^\e(\tau)-1\|_{L^2} \le C\e.
\]
Integrating \eqref{relative-energy-diff} over $(0,t)$, using the preceding bound on the pressure contribution, and applying Gr\"onwall's lemma, we conclude that, for all $t\in[0,T]$,
\[
\mathscr{H}^\e(t) +\frac12\int_0^t\|\Lambda^\alpha(u^\e-v)\|_{L^2}^2\,\rd\tau \le C_T\mathscr{H}^\e(0)+C_T\e.
\]
In particular, if $\mathscr{H}^\e(0)\to0$ as $\e \to 0$, then 
\[
\sup_{0\le t\le T}\mathscr{H}^\e(t) \to 0 \quad \text{and} \quad u^\e\to v \quad\text{in } L^2(0,T;\dot H^\alpha(\R^d)).
\]
This completes the proof.

%
%
%
%
%
%

\section*{Acknowledgments}
The work of Y.-P. Choi was supported by NRF grant no. 2022R1A2C1002820 and RS-2024-00406821.  The work of J. Jung was supported by NRF grant no. RS-2026-25589791.

%
%
%
%
%
%


%
%
%
%
%
%
\bibliographystyle{abbrv}
\bibliography{EA_incomp_lim}

\end{document}